\newcommand{\f}[5]{
\begin{array}{rrcl}
#1:& #2 & \longrightarrow & #3 \\
& #4 & \longmapsto & #5  \\
\end{array}
}
\numberwithin{equation}{section}
\newtheorem{lemma}[equation]{Lemma}
\newtheorem{proposition}[equation]{Proposition}
\newtheorem{theorem}[equation]{Theorem}
\newtheorem{THEOREM}{Theorem}
\newtheorem{corollary}[equation]{Corollary}
\theoremstyle{definition}
\newtheorem{obs}[equation]{Observation}
\newtheorem{remark}[equation]{Remark}
\theoremstyle{definition}
\newtheorem{example}[equation]{Example}
\newtheorem{definition}[equation]{Definition}
\newtheorem{construction}[equation]{Construction}
\numberwithin{equation}{section}
\newcommand{\reg}{{\mathrm{reg}}}
\newcommand{\gap}[1]{{\color{OliveGreen} \sf $\clubsuit\overset{\bullet\ \cdot}{\frown}\clubsuit$ GAP: [#1]}}
\newcommand{\C}{\mathbb C}
\newcommand{\Z}{\mathbb Z}
\newcommand{\N}{\mathbb N}
\newcommand{\A}{\mathbb A}
\newcommand{\wh}[1]{\widehat{#1}}
\newcommand{\Gm}{\mathbb{G}_m}
\newcommand{\D}{\mathcal{D}_{\Gm}}
\newcommand{\Dz}{\mathcal{D}_{K_0}}
\newcommand{\dd}{\Delta}
\newcommand{\ddA}{\dd_{\A^1}}
\newcommand{\ddAp}{\dd_{\A^{1*}}}
\newcommand{\As}{{\A^{1*}}}
\newcommand{\pz}{{U_p}}
\newcommand{\pzz}{{U_p^*}}
\newcommand{\pzzi}{{U_{p+i}^*}}
\newcommand{\pzi}{{U_{p+i}}}
\DeclareMathOperator{\Mod}{\mathbf{Mod}}
\DeclareMathOperator{\supp}{supp}
\DeclareMathOperator{\Hom}{Hom}
\DeclareMathOperator{\Ext}{Ext}
\DeclareMathOperator{\coker}{coker}
\DeclareMathOperator{\Spec}{Spec}
\renewcommand{\P}{\mathbb{P}}
\newcommand{\Hol}{\mathcal{H}ol}
\newcommand{\M}{\mathcal{M}}
\newcommand{\wt}[1]{\widetilde{#1}}
\newcommand{\jj}{\mathring{\jmath}}
\newcommand{\fin}{\mathrm{fin}}
\DeclareMathOperator{\im}{im}
\DeclareMathOperator{\Id}{Id}
\newcommand{\Loc}{{\Hol(\pz)}}
\newcommand{\Locc}{{\Hol(\pzz)}}
\newcommand{\ov}[1]{\overline{#1}}
\newcommand{\sstack}[2]{{\substack{#1\\#2}}}
\newcommand{\holstar}{\wh\Hol^{\raisebox{-4 pt}{\scriptsize{*}}}}
\newcommand{\modOrbit}{\Mod(\ddA^l)_{p+\Z}}
\newcommand{\holoOrbit}{\Hol(\ddA^l)_{p+\Z}}
\newcommand{\pmodOrbit}{\wh\Mod(\ddA^l)_{p+\Z}}
\newcommand{\pholoOrbit}{\wh\Hol(\ddA^l)_{p+\Z}}
\newcommand{\ppholoOrbit}{\holstar(\ddA^l)_{p+\Z}}
\newcommand{\finmod}{\Mod(\C[\pi])_{\mathrm{fin}}}
\newcommand{\finmodd}{\Mod(\C[z-p])_{\mathrm{fin}}}
\newcommand{\Mel}{\M^{(0,p+\Z)}}
\newcommand{\Gl}{\mathcal{S}}
\DeclareMathOperator{\an}{{an}}
\newcommand{\cY}{{\mathcal Y}}
\newcommand{\cH}{{\mathcal H}}
\newcommand{\bH}{{\mathbb H}}
\DeclareMathOperator{\GL}{{GL}}
\newcommand{\matriz}[4]{\left(\begin{matrix} #1&#2\\#3&#4\end{matrix}\right)}
\title{The local information of difference equations}
\author{Mois\'es Herrad\'on Cueto}
\date{}
\begin{document}

\maketitle

\tableofcontents

\abstract{We give a definition for the restriction of a difference module on the affine line to a formal neighborhood of an orbit, trying to mimic the analogous definition and properties for a $D$-module. We show that this definition is reasonable in two ways. First, we show that specifying a difference module on the affine line is equivalent to giving its restriction to the complement of an orbit, together with its restriction to a neighborhood of an orbit and an isomorphism between the restriction of both to the intersection. We also give a definition for vanishing cycles of a difference module and define a local Mellin transform, which is an equivalence between vanishing cycles of a difference module and nearby cycles of its Mellin transform, a $D$-module.}

\section{Statements}

This paper concerns algebraic difference equations on the affine line and their singularities. Analogously to the study of differential equations using $D$-modules, we use difference modules to study these difference equations. Difference modules are modules over the ring of difference operators, i.e. the ring generated by multiplication by functions and translations. This is the ring $\ddA = \C[z]\langle \tau,\tau^{-1}\rangle$ given by the relations $\tau z=(z-1)\tau$ and $\tau\tau^{-1}=\tau^{-1}\tau=1$. We interpret $\tau$ as the translation $z\mapsto z+1$. 



The most familiar approach to the study of linear algebraic difference equations is to study matrix difference equations: given $A(z)\in \GL_n(\C(z))$, one has the system of equations $y(z+1) = A(z)y(z)$. Two systems given by matrices $A_1,A_2$ are called gauge equivalent if there exists some $R(z)\in \GL_n(\C(z))$ such that $A_2(z) = R(z+1)^{-1}A_1(z)R(z)$. Studying matrix difference equations up to gauge equivalence is equivalent to studying $\C(z)\langle\tau,\tau^{-1}\rangle$-modules which are finite dimensional over $\C(z)$ (see Construction~\ref{con:diffEq}).

The relation between matrix difference equations and $\ddA$-modules is the same as the relation between generic local systems and holonomic $D$-modules: every matrix difference equation has a canonical ``intermediate extension'' (Construction~\ref{Con:GMExtension}), which is a holonomic $\ddA$-module, and every holonomic $\ddA$-module generically becomes a matrix difference equation (Proposition~\ref{prop:finiteStalks}). Part of our motivation to study $\ddA$-modules is that, just like $D$-modules, they have better functorial properties, which are essential for the results of this paper. From a more geometric point of view, they are quasicoherent $\Z$-equivariant sheaves on $\A^1$.

Our goal is to give a good definition for the formal local type of a $\ddA$-module at a point $p\in \A^1$. For matrix difference equations, we have the \textit{monodromy matrix} from \cite{AB}, which can be recovered from our definition (Corollary~\ref{cor:relationToBorodin}). We also show how our local type can be computed from the zeroes and poles of a matrix defining a difference equation (Proposition~\ref{prop:matrices}), which are what one naively would define as ``singularities'' of the equation.

In the remainder of the paper, we show that this formal type $M\mapsto M|_\pz$ has two desirable properties. First, we show that a module satisfies a ``sheaf'' property under this definition: it can be uniquely (and functorially) recovered from the data of $M|_\pz$, $M|_{\A^1\setminus p}$ and an isomorphism between these two defined on $U_p\setminus p$. This theorem ensures that the formal type doesn't lose information. As an application, we describe how to obtain all the $\ddA$-modules which are generically equal to a given matrix difference equation (Section~\ref{sec:extendOverPuncture}).

Secondly, we show that this construction is compatible with the Mellin transform. The Mellin transform is a particular case of the Fourier transform of \cite{L}, which can be seen as the ring isomorphism $\D:=\C[z]\langle \tau,\tau^{-1}\rangle \cong \ddA$ mapping $x$ to $\tau$ and $x\partial_x$ to $z$. We show that the local types of a $D$-module at $0$ and $\infty$ predict the local types of its Mellin transform at all points $p\in \A^1$, via an equivalence we call the local Mellin transform. This statement is to be expected, given that for the usual Fourier transform, which maps $D$-modules on $\A^1$ to themselves, one has local Fourier transforms \cite{BE}, and in loc. cit. it is shown that collection of local types of a $D$-module  at every point $p\in \A^1\cup\infty$ completely determine the local types of its Fourier transform. The result in this paper, together with the result in \cite{GS}, shows that the collection of local types of a $D$-module on $\A^1\setminus 0$ completely determine, and are determined by, the local types of its Mellin transform. Both in the Fourier and the Mellin cases, to make the statement precise one needs to be careful with what is meant by ``local type'' (see \cite{A} for the Fourier transform). In our situation, we define functors for difference modules which we believe deserve to be called ``vanishing cycles'' by analogy with the $D$-module case, especially the local Fourier transform as stated in loc. cit.

Note that in order to talk about the Mellin transform, it is necessary to use $\ddA$-modules rather than matrix difference equations. The intermediate extensions allows matrix difference equations to be mapped to $\ddA$-modules, but then one runs into the issue that the Mellin transform of an intermediate extension is not necessarily an intermediate extension, and the same happens for the inverse Mellin transform.

Algebraic difference equations are of interest to the study of spaces of initial conditions of Painlev\'e equations, especially discrete Painlev\'e equations. In \cite{AB}, it is shown how some discrete and differential Painlev\'e equations arise as isomonodromy transformations and deformations respectively, for certain moduli spaces of difference equations, namely on spaces of difference equations with a certain specified local type. Moreover, in loc. cit. an example of the local Mellin transform is shown, as a moduli space of difference equations with a given local type is shown to be isomorphic via the Mellin transform to a space of differential equations with another given local type. The results in this paper provide a framework in which such a construction can be done in general.

Other Painlev\'e equations arise as isomonodromy transformations of different discrete equations, such as $q$-difference equations and elliptic difference equations. In \cite{h2}, we show how one can build on these ideas to define the local type for all of these.

\subsection{The local type}

Let us start by comparing our situation to the $D$-module case. A $D$-module on the affine line is a module over the ring of differential operators $\C[x]\langle \partial_x\rangle$, where $\partial_x x=x\partial_x+1$. Given such a $D$-module $M$, it can be restricted to a formal disk around a point $p$: this can be thought of as the functor $M\mapsto M_p:=\C[[x-p]]\otimes _{\C[x]} M$, where $M_p$ has a natural $\C[[x-p]]\langle \partial_x\rangle$-module structure. The analogy with difference equations starts to break down here: if $M$ is a $\ddA$-module, there is no reasonable way to endow $M_p=\C[[z-p]]\otimes M$ with an action of $\tau$, but rather $\tau$ identifies $M_p$ with $M_{p+1}$.

If we restrict ourselves to difference modules which are ``small'' (i.e. holonomic, see Definition~\ref{def:Holonomic}), then we can easily see that their stalks are finitely generated (Proposition~\ref{prop:finiteStalks}). Therefore, $M_p$ is a finitely generated module over $\C[[z-p]]$, so it is completely described by its rank and its torsion. However, difference modules have singularities that are not captured by this picture. Considering a difference equation for a matrix $A$, zeroes and poles of $A$ should be singularities of the equation. Here by poles (resp. zeroes) we mean points where $A$ (resp. $A^{-1}$) is not defined. However, $M_p$ only remembers the dimension of $A$.




To define the local type, we attach additional structure to $\C[[z-p]]\otimes M$, namely the data of two submodules, which record the information ``at $+\infty$" and ``at $-\infty$".

\begin{construction}
Let $M$ be a holonomic $\ddA$-module, and let $p\in \A^1$. Choose any finitely generated sub-$\C[z]$-module $L\subset M$ such that $M/L$ is a torsion $\C[z]$-module. Then when $n\in \Z$ is big enough, $(\tau^{n} L)_p$ and $(\tau^{-n}L)_p$ are submodules of $M_p$ which are independent of the choices of $L$ and $n$.  The restriction of $M$ to the formal neighborhood $\pz$ of $p+\Z$, denoted $M|_{\pz}$, is defined to be the module $M_p$,  together with the data of two submodules $M|_\pz^l := (\tau^nL)_p$ and $M|_\pz^r := (\tau^{-n}L)_p$.
\end{construction} 

Thus the restriction $M|_\pz$ is not a difference module, but a $\C[[z-p]]$-module equipped with additional information. The local type lands in the category of diagrams $a\to b\gets c$ of $\C[[z-p]]$-modules. Even though the definition a priori involves an unknown big enough $n$, the local type can be computed in a straightforward way (Proposition~\ref{prop:matrices}).

In Section \ref{sec:restrictionToDisks} we prove that the above definition is well-defined and we show that for the ``open covering'' $\A^1 = U_p \cup (\A^1\setminus p)$, difference modules behave like sheaves. Here is what we mean precisely.

Localizing the ring $\ddA$ at an orbit $p+\Z$ yields the ring \(\ddAp = \C\left[z,\left\{\frac{1}{z-p-n}\right\}_{\raisebox{3pt}{$\scriptscriptstyle{n\in \Z}$}}\right]\langle \tau,\tau^{-1}\rangle\), which gives rise to a similar theory of difference modules on the punctured affine line (described in Section \ref{sec:ddAp}), including an analogous restriction functor $|_\pzz$ from $\ddAp$-modules to $\C((z-p))$-modules equipped with the data of two submodules. This gives rise to a commutative square of restrictions (any commutative diagram of categories in the present paper should be understood as commuting up to natural isomorphism):
\begin{equation}\label{eq:fiberDiagramIntro}
\begin{tikzcd}[column sep = 6 em,ampersand replacement=\&]
\Hol(\ddA) \arrow[r,"|_{\As}"]\arrow[d,"|_{\pz}"] \& \Hol(\ddAp)\arrow[d,"|_{\pzz}"] \\
\Loc \arrow[r,"|_{\pzz}=\cdot \otimes \C((z-p))"] \& \Locc.\\
\end{tikzcd}
\end{equation}
\vspace{-2.5em}

Here $\Hol(\ddA)$ denotes holonomic difference modules on the affine line, and analogously for $\Hol(\ddAp)$. These definitions are given in Section \ref{sec:background}. The remaining arrows and categories are all defined in Section \ref{subsec:RestrDefs}.

\begin{THEOREM}\label{thm:formalCycles}
The diagram (\ref{eq:fiberDiagramIntro}) is a fibered product of categories.
\end{THEOREM}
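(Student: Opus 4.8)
The commutative square (\ref{eq:fiberDiagramIntro}) induces a functor
\[
\Phi\colon \Hol(\ddA)\longrightarrow \mathcal{C}:=\Hol(\ddAp)\times_{\Locc}\Loc,\qquad M\longmapsto \bigl(M|_{\As},\ M|_{\pz},\ \alpha_M\bigr),
\]
where $\alpha_M$ is the comparison isomorphism between $(M|_{\As})|_{\pzz}$ and $(M|_{\pz})|_{\pzz}$ supplied by the commutativity; the goal is to prove $\Phi$ is an equivalence. Both full faithfulness and essential surjectivity will come out of a single construction: a Beauville--Laszlo-type gluing that realizes every holonomic $\ddA$-module as a fibered product of its restriction away from the orbit with a model built from its restriction near the orbit, glued over the punctured formal neighborhood. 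The plan is to set this up as a candidate quasi-inverse and then verify the two halves.

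\emph{The gluing functor.} Given a triple $(N,(V_0,V^l,V^r),\alpha)$, restriction of scalars along $\C[z]\hookrightarrow\C[z,\{(z-p-n)^{-1}\}_{n\in\Z}]$ turns $N$ into a $\ddA$-module $j_*N$ --- the analogue of the direct image from the punctured line, carrying its induced $\tau$ --- which is too large along $p+\Z$. Using $\alpha$ to identify $N\otimes\C((z-p))$ with $V_0\otimes\C((z-p))$, one sets $\Psi(N,\dots)$ to be the $\ddA$-submodule of $j_*N$ cut out near the orbit by the lattice data, together with the torsion of $V_0$ propagated $\tau$-equivariantly along $p+\Z$: concretely, the sections whose germs along the orbit are controlled by the $\tau$-translates of $V^l$ on one side and of $V^r$ on the other (the precise formula being an intersection of two one-sided conditions), plus the $\ddA$-submodule generated by the torsion submodule of $V_0$. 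The two lattices are exactly the data needed to make this gluing well-defined along a \emph{free} $T$-orbit: $V^l$ controls the glued module ``coming from $+\infty$'' and $V^r$ ``coming from $-\infty$'', and together they pin $M$ down near every point of $p+\Z$ starting from its value at the single point $p$. One checks that the output is holonomic over $\ddA$ --- finiteness follows from $V^l,V^r$ being finitely generated over $\C[[z-p]]$ and from holonomicity of $N$ over $\ddAp$ --- and functoriality of $\Psi$ is then routine.

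\emph{Inverting $\Phi$.} I would then produce natural isomorphisms $\Phi\Psi\cong\Id$ and $\Psi\Phi\cong\Id$. For $\Phi\Psi\cong\Id$ the three components are checked separately: $(\Psi(N,\dots))|_{\As}\cong N$ because the cutting-down alters $j_*N$ only along $p+\Z$; $(\Psi(N,\dots))|_{\pz}\cong(V_0,V^l,V^r)$ by the definition of the lattice conditions together with the stabilization statement underlying the Construction of $M|_{\pz}$ (that $(\tau^{\pm n}L)_p$ becomes independent of $n$ and of $L$ for $n\gg0$); and the comparison isomorphism is matched by construction. For $\Psi\Phi\cong\Id$ one uses that a holonomic $\ddA$-module $M$ is reconstructed from $M|_{\As}$ and its germ data at $p$: modulo torsion, $M$ is the intersection over $n\in\Z$ of the $\tau$-translates of the germ-lattice $M_p$ inside $j_*(M|_{\As})$ (by $\tau$-equivariance, $M_{p+n}$ is the corresponding $\tau$-translate of $M_p$), while the torsion submodule of $M$ is generated over $\ddA$ by the torsion of $M_p$; matching this with the output of $\Psi$ simultaneously yields full faithfulness, since it identifies $\Hom_{\ddA}(M,M')$ with the fibered product of $\Hom_{\ddAp}(M|_{\As},M'|_{\As})$ and $\Hom_{\Loc}(M|_{\pz},M'|_{\pz})$ over $\Hom_{\Locc}((M|_{\pz})|_{\pzz},(M'|_{\pz})|_{\pzz})$.

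\emph{Main obstacle.} The crux is the well-definedness and correctness of $\Psi$: that the intersection-plus-torsion construction is independent of the auxiliary choices, lands in $\Hol(\ddA)$, and is genuinely inverse to $\Phi$. The most delicate point is verifying that \emph{two} lattices are exactly the right amount of local data --- that the one-sided versions of the construction (using $\tau$-translates of $V^l$ alone, or of $V^r$ alone) fail to recover $M$ near the orbit, while their intersection recovers it on the nose. I expect this is where the features specific to difference equations, as opposed to the $D$-module analogue, genuinely enter, and where most of the work of Section~\ref{sec:restrictionToDisks} goes.
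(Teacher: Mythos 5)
Your broad strategy --- build an explicit quasi-inverse by gluing the restriction away from the orbit with the germ data along the orbit --- matches the paper's, and the appeal to a Beauville--Laszlo-type factorization for the surjectivity step is also how the paper proceeds (via $GL_n(\C((\pi)))=GL_n(\C[[\pi]])GL_n(\C[\pi,\pi^{-1}])$). But there is a genuine gap in how you assemble the quasi-inverse $\Psi$: you describe it as a torsion-free piece cut out of $j_*N$ by lattice conditions, \emph{plus} the $\ddA$-submodule generated by the torsion of $V_0$. Since the torsion cannot live inside $j_*N$, that ``plus'' is a direct sum, so your $\Psi$ always produces a \emph{split} extension of the torsion-free part by the torsion. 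That is false in general. Concretely, take $M$ with $\C[z]$-basis $\{\tilde s\}\cup\{\tau^m s\}_{m\in\Z}$, where $(z-p-m)\tau^m s=0$ and $\tau\tilde s=\tilde s+s$: then $M$ is a non-split $\ddA$-extension of $\C[z]$ by $\delta_p=\ddA/\ddA(z-p)$, and its two lattices in $M_p\cong\C[[\pi]]\tilde s_p\oplus\C s_p$ are $M|_\pz^l=\C[[\pi]](\tilde s_p+s_p)$ and $M|_\pz^r=\C[[\pi]]\tilde s_p$, which differ precisely by the torsion --- but their images in $\ov M_\pz$ agree, so your intersection-of-lattices step recovers only $\C[z]$, and appending the torsion gives $\C[z]\oplus\delta_p\neq M$. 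The extension class is recorded in how $M|_\pz^l$ and $M|_\pz^r$ sit inside $M_p$ \emph{relative to the torsion of $M_p$}, and your construction throws that away.

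The paper avoids this by \emph{not} building the quasi-inverse inside $j_*N$. Its $F(M)$ is a sub-$\ddA$-module of $M_\pz^{\Z}\oplus M_\As$: an element is a tuple $((m_i)_i,m_\As)$ with $m_i\in M_\pz$ (not $\ov M_\pz$) satisfying $m_\As|_{\pzzi}=m_i|_\pzz$, $m_i\in M_\pz^l$ for $i\ll 0$, $m_i\in M_\pz^r$ for $i\gg 0$. The torsion part is not added on; it is the kernel of the projection to $M_\As$, and the $m_i$'s are lifts of germs of $m_\As$ to $M_\pz$ itself, so the $M_\pz^l$/$M_\pz^r$ conditions pin down the lifts and hence encode the extension class. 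If you want to salvage your version of $\Psi$, you would need to record a compatibility between the torsion piece and the chosen lattices in $V_0$ --- which is, in effect, what the sequences $(m_i)_i$ do. Beyond this, the parts you mark as ``one checks'' (well-definedness, holonomicity of the output, and the two natural isomorphisms) are where most of Section~\ref{subsec:RestrProof}'s work actually lies; in particular the surjectivity of $\psi_{\pz}$ requires the $GL_n$ factorization lemma, which your sketch gestures at but does not state.
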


In other words, holonomic difference modules can be recovered from their restrictions to $\A^1\setminus (p+\Z)$ and $\pz$, together with a compatibility between these two, which amounts to a given isomorphism between their restrictions to the punctured disk $\pzz$. Conversely, this information is enough to determine a difference module on the line.

This is analogous to the fact that functions on a scheme form a sheaf, or, in a slightly different way, can be seen as analogous to the following statement, which is an example of faithfully flat descent. Let $V$ be a variety, let $p\in  V$, and let ${\mathcal O_p}$ be its completed local ring. Then there is a commutative square, all of whose arrows are pullbacks:
\[
\begin{tikzcd}[column sep = 5 em,ampersand replacement=\&]
\operatorname{QCoh}(V) \arrow[r]\arrow[d] \& \operatorname{QCoh}(V\setminus \{p\})\arrow[d] \\
\Mod({\mathcal O_p}) \arrow[r] \& \Mod\left(\operatorname{Frac}({\mathcal O_p})\right).\\
\end{tikzcd}
\]
\vspace{-2.5em}

This square is a Cartesian square of categories. It is not even necessary to complete the local ring: the statement would be true replacing ${\mathcal O_p}$ by the stalk of $\mathcal O$ at $p$. The same is true for the diagram (\ref{eq:fiberDiagramIntro}): We could replace formal fibers everywhere by stalks and obtain the same statement (Proposition~\ref{prop:stalksInsteadofFibers}).

\subsection{Vanishing cycles and Mellin transform}


%

For every pair of points $p\in \Gm\cup \{0,\infty\}=\P^1_x$ and $q\in \A^1\cup \{\infty\}=\P^1_z$, there is a local Mellin transform $\M^{(p,q)}$ relating the local type of $D$-modules at $p$ with the local type of difference modules at $q$. There are four essentially different behaviors depending on whether $p$ and $q$ are points ``at infinity'' (including $p=0$). If neither $p$ or $q$ is at infinity, then the local Mellin transform is trivial: it should be the functor between two categories which are $0$ (it can be seen that the local type at finite points has no influence on the local type at finite points, from the fact that all the other local Mellin transforms completely determine the local types). If $q=\infty$, then there are two possibilities: either $p\in \Gm$ or $p=0,\infty$. Both of these possibilities were defined and computed in \cite{GS}. Lastly, there are local Mellin transforms from $0$ or $\infty$ in $\P^1$ to points $p\in \A^1$, which are the focus of the present paper. Actually, translation makes all Mellin transforms on the same $\Z$-orbit isomorphic, and consequently we will denote these local Mellin transforms $\Mel$ and $\M^{(\infty,p+\Z)}$.

Following the analogy with the local Fourier transform, the local Mellin transform $\Mel$ should give an equivalence between some nearby cycles of a $D$-module $M$ at $0$ and vanishing cycles of $\M (M)$ at $p+\Z$. So first of all we need a notion of vanishing cycles for a difference module.

\begin{definition}\label{def:vanishingCycles}
Let $M\in \Hol(\ddA)$ and $p\in \A^1$. The \textbf{left (resp. right) vanishing cycles} of $M$ at $p+\Z$ are defined as
\[
\Phi_{p+\Z}^lM : =\frac{M_p}{M|_\pz^l} ,\hspace{4em} \text{(resp.)}\hspace{1em}\Phi_{p+\Z}^rM : =\frac{M_p}{M|_\pz^r}.
\]
\end{definition}
These functors approximately compute a familiar notion: if a difference equation is given as a matrix difference equation $y(z+1)=A(z)y(z)$, where $A(z)\in \GL_n(\C(z))$, then $\Phi_{p+\Z}^r$ computes the poles of the matrix $A$ and $\Phi_{p+\Z}^l$ computes the zeroes. This is not completely true, since taking a gauge transformation $y(z)=R(z)\wt y(z)$ for $R(z)\in \GL_n(\C(z))$ might change the set of zeroes and poles, in two ways. First, the zeroes and poles could be translated in the same $\Z$-orbit (consider for example the change $R(z)=A(z)^{-1}$). To deal with this, we think of singularities as a property of the whole $\Z$-orbit. Second, a gauge change might introduce new \emph{apparent} singularities (``apparent'' because they can be removed by a gauge change). The local type and vanishing cycles provide a notion of zeroes and poles that is coordinate independent, and it can detect apparent singularities (Remark~\ref{rem:apparent}). Proposition~\ref{prop:matrices} describes the exact relation between the local type and a matrix difference equation.

Note that $\Phi^l_{p+\Z}$ and $\Phi^r_{-p+\Z}$ can be interchanged by the automorphism $z\leftrightarrow -z$ and $\tau\leftrightarrow \tau^{-1}$ on the difference side, and $x\leftrightarrow x^{-1}$ on the $D$-module side, thus we may just focus on $\Phi^l$ from now on. We show that the image of $\Phi^l_{p+\Z}$ lands in the category $\finmodd$ of finite length $\C[[z-p]]$-modules. Further, we construct its right adjoint, which we denote $\iota_{p!}^\to$. It is simply the functor $N\mapsto \C((\tau))\otimes_\C N$.

These are all the necessary ingredients to construct the local Mellin transform. On the $D$-module side, we should just focus on regular $D$-modules, and this can be seen just from the fact that irregular $D$-modules are the input of $\M^{(0,\infty)}$. Thus, we will let $\Hol(\Dz)^{\reg}$ denote the category of regular holonomic modules over $\Dz=\C((x))\langle \partial_x\rangle$. Following \cite{A}, we denote by $\jj_{0*}$ the forgetful functor from $\C((x))\langle\partial_x\rangle$-modules to $\C[x,x^{-1}]\langle \partial_x\rangle$-modules.

Further, using the classification of $D$-modules over the formal disk (originally proved by Turrittin \cite{Turrittin} and Levelt \cite{levelt1975}, but the proof can also be found in \cite{van2003galois}), we may split them according to the leading term, which is well defined up to adding an integer. We denote $\Hol(\Dz)^{\reg,(p)}$ the category of those regular $D$-modules whose leading term is in $p+\Z$.

\begin{THEOREM}\label{thm:localMellin}\leavevmode
\begin{enumerate}[1)]
\item For any $p+\Z\in \A^1/\Z$, there is an equivalence
\[\Mel : \Hol (\mathcal{D}_{K_0} )^{\reg,(p)} \longrightarrow \finmodd.\]
and for any $F\in \Hol (\mathcal{D}_{K_0} )^{\reg,(p)}$, there is a functorial isomorphism
\[
\M(\jj_{0*} (F))\overset{\sim}\longrightarrow \iota_{p!}^\to (\Mel(F)).
\]The isomorphism is a homeomorphism in the natural topology, i.e. the $\tau$-adic topology. This determines $\Mel$ up to natural isomorphism.
\item For any $p+\Z\in \A^1/\Z$, there is an equivalence
\[\M^{(\infty, p+\Z)} : \Hol (\mathcal{D}_{K_\infty})^{\reg,(-p)} \longrightarrow \finmodd .\]
and for any $F\in \Hol (\mathcal{D}_{K_\infty} )^{\reg,(-p)}$, there is a functorial isomorphism
\[
\M(\jj_{\infty*} (F))\overset{\sim}\longrightarrow \iota_{p!}^\gets (\M^{(\infty,p+\Z)}(F)).
\]The isomorphism is a homeomorphism in the natural topology, i.e. the $\tau$-adic topology. This determines $\M^{(\infty,p+\Z)}$ up to natural isomorphism.
\end{enumerate}
\end{THEOREM}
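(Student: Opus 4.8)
The plan is to exploit the fact that the composite $\M\circ\jj_{0*}$ is nothing but a restriction of scalars along an explicit ring map. Indeed $\jj_{0*}$ is restriction along the subring inclusion $\C[x,x^{-1}]\langle\partial_x\rangle\hookrightarrow\C((x))\langle\partial_x\rangle$, and by definition $\M$ is restriction along the ring isomorphism $\ddA\xrightarrow{\ \sim\ }\C[x,x^{-1}]\langle\partial_x\rangle$ sending $\tau\mapsto x$, $z\mapsto x\partial_x$; hence $\M\circ\jj_{0*}$ is restriction of scalars along $\ddA\to\C((x))\langle\partial_x\rangle$, $\tau\mapsto x$, $z\mapsto x\partial_x$. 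The strategy is then: (i) put every object of $\Hol(\Dz)^{\reg,(p)}$ in a normal form; (ii) compute $\M\circ\jj_{0*}$ of that normal form and recognize it as $\iota_{p!}^\to$ of something; (iii) extract $\Mel$ by applying $\Phi^l_{p+\Z}$, and deduce all the assertions.

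For (i): by the formal classification of Turrittin \cite{Turrittin} and Levelt \cite{levelt1975}, every $F\in\Hol(\Dz)^{\reg,(p)}$ is isomorphic to $F_{V,\nu}:=\C((x))\otimes_\C V$ with connection determined by $x\partial_x=p\cdot\Id_V+\nu$, for a finite-dimensional $\C$-vector space $V$ and a nilpotent $\nu\in\mathrm{End}(V)$: regularity reduces to a constant residue matrix, the condition on the leading term forces its eigenvalues into $p+\Z$, the $\Z$-indeterminacy is removed by the gauge transformations ``scale basis vectors by powers of $x$'', and $\nu$ is the remaining nilpotent (unipotent monodromy) part. Writing a $\C((x))\langle\partial_x\rangle$-linear map $F_{V,\nu}\to F_{V',\nu'}$ coefficientwise, compatibility with $x\partial_x$ forces the Laurent coefficient in degree $j\ne 0$ to intertwine the nilpotent $\nu$ with the invertible $\nu'+j$, hence to vanish; so such maps are exactly $f\otimes v\mapsto f\otimes\phi(v)$ for $\phi$ commuting with the nilpotents. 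Thus $F\mapsto(V,\nu)$ (reading $\nu$ as multiplication by $z-p$) is an equivalence $\Hol(\Dz)^{\reg,(p)}\xrightarrow{\ \sim\ }\finmodd$.

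For (ii): unwinding the restriction of scalars, $\M(\jj_{0*}F_{V,\nu})$ has underlying space $\C((x))\otimes_\C V$ with $\tau$ acting as multiplication by $x$ and $z$ acting as $x\tfrac{d}{dx}\otimes\Id+\Id\otimes(p+\nu)$; on the $\C$-line $x^i\otimes V$ this reads $\tau\colon x^i\otimes v\mapsto x^{i+1}\otimes v$ and $z\colon x^i\otimes v\mapsto x^i\otimes(p+i+\nu)v$. This is verbatim the description of $\iota_{p!}^\to(V,\nu)=\C((\tau))\otimes_\C V$, so $x^i\otimes v\mapsto\tau^i\otimes v$ is an isomorphism of $\ddA$-modules $\Theta_{V,\nu}\colon\M(\jj_{0*}F_{V,\nu})\xrightarrow{\ \sim\ }\iota_{p!}^\to(V,\nu)$, visibly natural in $(V,\nu)$ and therefore, through step (i), functorial in $F$; in particular $\M(\jj_{0*}F)$ lies in $\Hol(\ddA)$. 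As $\Theta_{V,\nu}$ respects the $\Z$-grading by powers of $x=\tau$, it preserves the Laurent-series filtrations and so is a homeomorphism for the $\tau$-adic topologies.

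For (iii): a short computation gives $\Phi^l_{p+\Z}\circ\iota_{p!}^\to\cong\Id_{\finmodd}$ --- the module $\iota_{p!}^\to(N)$ is $\C[z]$-torsion supported on $p+\Z$, so it has zero left submodule, and its formal fiber at $p$ is the unique summand supported at $p$, namely $N$. Defining $\Mel:=\Phi^l_{p+\Z}\circ\M\circ\jj_{0*}$, the isomorphism $\Theta$ gives $\Mel(F)\cong(V,\nu)$ functorially, so $\Mel$ agrees up to natural isomorphism with the equivalence of step (i), hence is an equivalence $\Hol(\Dz)^{\reg,(p)}\to\finmodd$, and $\Theta$ is the required functorial isomorphism $\M(\jj_{0*}F)\xrightarrow{\sim}\iota_{p!}^\to(\Mel F)$. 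Uniqueness of $\Mel$ up to natural isomorphism follows by applying $\Phi^l_{p+\Z}$ to any such isomorphism and using $\Phi^l_{p+\Z}\circ\iota_{p!}^\to\cong\Id$. Part 2) is obtained from part 1) by transporting everything along the involution $z\leftrightarrow-z$, $\tau\leftrightarrow\tau^{-1}$ of $\ddA$ and $x\leftrightarrow x^{-1}$ of $\mathcal D$, which interchanges $0$ and $\infty$, the leading terms $p$ and $-p$, the functors $\jj_{0*}$ and $\jj_{\infty*}$, and $\Phi^l_{p+\Z},\iota_{p!}^\to$ with $\Phi^r_{p+\Z},\iota_{p!}^\gets$. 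The step I expect to demand the most care is (i): pinning down the normal form of objects --- especially the correct bookkeeping of the $\Z$-ambiguity in the leading term --- and verifying that morphisms in $\Hol(\Dz)^{\reg,(p)}$ match those in $\finmodd$, so that the tautological isomorphism of (ii) is genuinely functorial.
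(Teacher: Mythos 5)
Your approach is genuinely different from the paper's and is, in its core, a valid alternative: the paper proves the equivalence $\M\circ\jj_{0*}:\Hol(\Dz)^{\reg,(p)}\to\ppholoOrbit$ abstractly (image lands in $\ppholoOrbit$ via Proposition~\ref{prop:proHoloOverFin}, full faithfulness is automatic, essential surjectivity by checking $\iota_{p!}^\to M$ is regular with leading coefficient $p$), and then composes with $\iota_p^!$ using Proposition~\ref{prop:iotaEquiv}; you instead classify objects explicitly by Turrittin--Levelt normal forms $(V,\nu)$ and exhibit the isomorphism $\Theta_{V,\nu}:\M(\jj_{0*}F_{V,\nu})\xrightarrow{\sim}\iota_{p!}^\to(V,\nu)$ by a direct coordinate computation. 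Your step (ii) is correct and in fact more transparent than the paper's essential-surjectivity argument; it buys you the functorial isomorphism and the homeomorphism claim (graded/filtration-preserving) in one stroke, provided step (i) is done carefully, and step (i) looks sound.

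There is, however, a genuine gap in step (iii). You assert that ``$\M(\jj_{0*}F)$ lies in $\Hol(\ddA)$,'' but this is false: $\C((\tau))\otimes_\C V$ is a Tate-type module, not finitely generated over $\ddA$ (its image under $\ddA\cdot(1\otimes V)$ is only $\bigoplus_i\tau^i\otimes V$), and the paper explicitly flags this in the Remark after Proposition~\ref{prop:adjoints}. Consequently you cannot legitimately apply $\Phi^l_{p+\Z}$, which is defined via the restriction $|_\pz$ on $\Hol(\ddA)$. Moreover the identity you rely on, $\Phi^l_{p+\Z}\circ\iota_{p!}^\to\cong\Id$, is delicate: $\Phi^l$ for a torsion module reduces to the formal fiber $\C[[z-p]]\otimes_{\C[z]}(-)$, and for the infinite Tate module $\iota_{p!}^\to N$ this tensor product is \emph{not} simply the summand $\tau^0\otimes N$ (the infinite product $\prod_{i\ge 1}\tau^i N$ does not vanish under $\C[[z-p]]\otimes_{\C[z]}(-)$ merely because $z-p$ acts invertibly on it). What is literally true, and what the paper uses, is that the torsion-sections functor $\iota_p^!M=\{m:\exists n,(z-p)^nm=0\}$ satisfies $\iota_p^!\circ\iota_{p!}^\to\cong\Id$. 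So you should define $\Mel:=\iota_p^!\circ\M\circ\jj_{0*}$ (as the paper does), or equivalently define $\Mel(F_{V,\nu}):=(V,\nu)$ directly via your step (i) and let $\Theta$ provide the required isomorphism; either repair makes the rest of your argument, including uniqueness and part (2) by the $z\leftrightarrow -z$, $\tau\leftrightarrow\tau^{-1}$, $x\leftrightarrow x^{-1}$ symmetry, go through.
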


Analogously to the result in \cite{A}, the adjunctions $\Phi^l_{p+\Z}\vdash \iota_{p!}^\to$, $\Psi_0\vdash \jj_{0*}$ (where $\Psi$ denotes nearby cycles) immediately yield the following corollary, which as desired gives the relation between the local information of a $D$-module and that of its Mellin transform.
\begin{corollary}
Let $F\in \Hol(\D)$. For any $p\in \A^1/\Z$, there are natural isomorphisms
\begin{align*}
\Phi_{p+\Z}^l(\M(F)) &\cong \Mel(\Psi_0(F)^{\reg,(p)});\\
\Phi_{p+\Z}^r(\M(F)) &\cong \M^{(\infty,p+\Z)}(\Psi_\infty(F)^{\reg,(-p)}).
\end{align*}
\end{corollary}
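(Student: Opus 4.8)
\emph{Proof proposal.} The corollary should follow formally from the Theorem by a Yoneda argument, as in \cite{A}; the only step that is not purely formal is a $\Hom$-vanishing between the Turrittin--Levelt summands of $\Psi_0 F$, which I isolate below. I would fix $p$ and prove the statement about $\Phi^l$; the one about $\Phi^r$ follows by running the identical argument with part 2) of the Theorem and the adjunctions $\Phi^r_{p+\Z}\vdash\iota_{p!}^\gets$, $\Psi_\infty\vdash\jj_{\infty*}$ replacing part 1) and $\Phi^l_{p+\Z}\vdash\iota_{p!}^\to$, $\Psi_0\vdash\jj_{0*}$ (equivalently, one can transport the first isomorphism along the involution $z\leftrightarrow -z$, $\tau\leftrightarrow\tau^{-1}$, $x\leftrightarrow x^{-1}$, which swaps $\Phi^l$ with $\Phi^r$ and $0$ with $\infty$).

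Fix a quasi-inverse $\Mel^{-1}\colon\finmodd\to\Hol(\Dz)^{\reg,(p)}$ of the equivalence of part 1). For $F\in\Hol(\D)$ and $N\in\finmodd$ I would compute
\begin{align*}
\Hom_{\finmodd}\big(\Phi^l_{p+\Z}(\M F),\,N\big)
&\cong \Hom_{\Hol(\ddA)}\big(\M F,\ \iota_{p!}^\to N\big)\\
&\cong \Hom_{\Hol(\ddA)}\big(\M F,\ \M(\jj_{0*}(\Mel^{-1}N))\big)\\
&\cong \Hom_{\Hol(\D)}\big(F,\ \jj_{0*}(\Mel^{-1}N)\big)\\
&\cong \Hom_{\Hol(\Dz)}\big(\Psi_0 F,\ \Mel^{-1}N\big)\\
&\cong \Hom_{\Hol(\Dz)}\big(\Psi_0(F)^{\reg,(p)},\ \Mel^{-1}N\big)\\
&\cong \Hom_{\finmodd}\big(\Mel(\Psi_0(F)^{\reg,(p)}),\,N\big),
\end{align*}
using, in order: the adjunction $\Phi^l_{p+\Z}\vdash\iota_{p!}^\to$; part 1) of the Theorem applied to $\Mel^{-1}N$ together with $\Mel\circ\Mel^{-1}\cong\Id$; that $\M$ is an equivalence; the adjunction $\Psi_0\vdash\jj_{0*}$; the $\Hom$-vanishing below; and that $\Mel$ is an equivalence. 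Each isomorphism is natural in $N$ and in $F$: the adjunction bijections are natural, $\M$ and $\Mel$ induce natural isomorphisms on the relevant $\Hom$-sets, the isomorphism of the Theorem is functorial, and the decomposition $\Psi_0 F\cong\Psi_0(F)^{\reg,(p)}\oplus(\text{rest})$ used in the fifth line is functorial. Hence by the Yoneda lemma $\Phi^l_{p+\Z}(\M F)\cong\Mel(\Psi_0(F)^{\reg,(p)})$, naturally in $F$.

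The step I expect to require actual work is the fifth line, i.e. the identity $\Hom_{\Hol(\Dz)}(\Psi_0 F,\,G)\cong\Hom_{\Hol(\Dz)}(\Psi_0(F)^{\reg,(p)},\,G)$ for $G\in\Hol(\Dz)^{\reg,(p)}$. By the Turrittin--Levelt classification $\Psi_0 F$ is a finite direct sum of its purely irregular part and its regular parts $\Psi_0(F)^{\reg,(q)}$ indexed by leading terms $q\in\C/\Z$, so it suffices to check that every morphism to $G$ annihilates all summands except $\Psi_0(F)^{\reg,(p)}$: the irregular part maps to $0$ because its image would be a regular subquotient and it has none, and $\Psi_0(F)^{\reg,(q)}$ with $q\neq p$ in $\C/\Z$ maps to $0$ because the monodromies of source and target have disjoint eigenvalue sets, so the image — a common subquotient — vanishes. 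This is all classical; in the end the content of the proof is the bookkeeping of the displayed chain, and that is where I would be most careful to keep the naturality statements honest.
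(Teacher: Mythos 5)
Your proof is correct and is exactly the Yoneda chain the paper has in mind when it says the corollary ``follows directly from the adjunctions $\Psi_0\vdash\jj_{0*}$ and $\Phi^l_{p+\Z}\vdash\iota_{p!}^\to$''; you have merely spelled out the six isomorphisms, which the paper leaves implicit. One small notational caution: in your second, third and fourth lines the objects $\iota_{p!}^\to N$ and $\jj_{0*}(\Mel^{-1}N)$ are \emph{not} holonomic (the paper's Remark after Proposition \ref{prop:adjoints} flags precisely this), so the $\Hom$-sets there should be taken in $\Mod(\ddA)$ and $\Mod(\D)$ rather than in $\Hol(\ddA)$ and $\Hol(\D)$; this is exactly how the paper states both adjunctions and does not affect the argument.
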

Here we write $M^{\reg,(p)}$ to denote the functor that picks from a $\Dz$-module its regular singular summand with leading coefficient $p$ (i.e. the left adjoint to the inclusion of these submodules into general $\Dz$-modules).

\subsubsection*{Acknowledgments}

I am very grateful to Dima Arinkin for suggesting the problem and for much help along the way. I also wish to thank Juliette Bruce, Eva Elduque and Solly Parenti  for many useful discussions. This work was partially supported by National Science Foundation grant DMS-1603277.

\section{Difference equations and difference modules}\label{sec:background}

A system of linear difference equations is given as follows: let $A(z)\in \GL_n(\C(z))$, and consider the equation
\[
y(z+1)=A(z)y(z).
\]
Where $y$ is a vector function $\A^1\to \C^n$. A natural generalization of this setting, which allows for difference equations to be defined locally, comes from taking $y$ to be not a section of a trivial bundle $\mathcal O^n$, but of a vector bundle $V$ without a trivialization. In this setting, a difference equation is a rational isomorphism $\mathcal{A}:V\overset\sim\dasharrow t^*V$, where $t$ denotes the translation of $\A^1$. In \cite{AB} this is called a d-connection. On the fibers where $\mathcal A$ is defined and invertible, $\mathcal A|_z$ is an isomorphism $V|_z\to V|_{z+1}$ that depends on $z$ as a rational function. This notation will only be used briefly in Section~\ref{sec:relationToBorodin}.

Instead, we can simplify our notation using the fact that $\A^1$ is affine. We can identify a vector bundle $V$ with its $\C[z]$-module of global sections. Given such a $\mathcal A$, we consider $\tau = (t^*)^{-1}\circ \mathcal A:V\to V$. Now, since $t^*$ is not $\C[z]$-linear, $\tau$ is not linear, but rather we have the relation $\tau z=(z-1)\tau$. Conversely, if we are given a $\C(z)\langle \tau^{\pm 1}\rangle$-module $V$, we can take the corresponding vector bundle (up to rational isomorphism) and define $\mathcal A = t^*\circ \tau$.

In sight of this description, it seems natural to consider all $\ddA$-modules, regardless of whether they are vector bundles. Let us now fix the notation for the difference module corresponding to a difference equation.

\begin{construction}\label{con:diffEq}
Consider $A\in GL_n(\C(z))$ and the difference equation $y(z+1)=A(z)y(z)$. Make $\C(z)^n$ into a difference module by letting $\tau(y(z))=A(z-1)y(z-1)$. This ensures that $\tau (z y(z))=(z-1)\tau(y(z))$. Inside $\C(z)^n$, consider the trivial vector bundle $L=\C[z]^n\subset\C(z)^n$. The \textbf{difference module corresponding to $A$} is the $\ddA$-module $M$ generated by $L$. The solutions to the difference equation are ``horizontal'' sections, i.e. sections which are fixed by $\tau$.
\end{construction}

\begin{remark}
A gauge transformation $y(z)=R(z)\wt y(z)$ for $R(z)\in \GL_n(\C(z))$ yields an equivalent difference equation, with matrix $\wt A(z) = R(z+1)A(z)R(z)^{-1}$. The corresponding difference module will be generated by the columns of $R(z)$, i.e. by a vector bundle $\wt L$ which is a modification of $L$ at the points where $R$ or $R^{-1}$ is not defined. Over $\ddA$, it is possible for $L$ and $\wt L$ to generate the same module even if they are not equal. For the simplest example, consider $A(z)=(1)$, and $R(z)=(z)$.
\end{remark}


Given a d-connection, i.e. a module over $\C(z)\langle \tau^{-\pm 1}\rangle$ which is a finite dimensional vector space over $\C(z)$, there are many choices of finitely generated $\ddA$-modules which generate it. There is, however, a smallest such one, which we call the intermediate extension by analogy with the $D$-module case. We discuss its properties in Section~\ref{sec:intermediateExtension}. Intermediate extensions can be recognized by the local type (see Remark~\ref{rem:GMextension}), and Section~\ref{sec:extendOverPuncture} details how to use the local type to describe all holonomic $\ddA$-modules which generically look the same. If one is only looking for $\ddA$-modules which are torsion-free, the answer (at every orbit $p+\Z$) is identical to the question of extending a torsion $\C[[z-p]]$-module by a free $\C[[z-p]]$-module to obtain a torsion-free module (Corollary~\ref{cor:extensions}).




\subsection{Holonomic difference modules}

We will restrict our attention to holonomic difference modules. Over the affine line, they have an analogous definition to the one for $D$-modules.

\begin{definition}\label{def:Holonomic}
A $\ddA$-module is \textbf{holonomic} if it is finitely generated over $\ddA$ and every element is annihilated by a nonzero element of $\ddA$.
\end{definition}

The same definition holds for a $D$-module (see for example \cite[Chapter 10]{coutinho}), so a difference module is holonomic if and only if its Mellin transform is holonomic. In general, holonomic difference modules are characterized by the codimension of their singular support, see \cite{MP}. Note that the Mellin transform shows that holonomic difference modules have finite length, since holonomic $D$-modules do.

Holonomic difference modules satisfy many desirable properties. However, they are not vector bundles over a dense open set, as in the case of $D$-modules. The simplest counterexample is the torsion module $\delta_0$ generated by an element $s$ and the relation $zs=0$. The element $\tau^ns$ is supported on $n\in \A^1$, which yields a countable collection of points where  $\delta_0$ has torsion.

One desirable property, however, is the following, which does not hold for holonomic $D$-modules.

\begin{proposition}\label{prop:finiteStalks}
Any holonomic $\ddA$-module has finite stalks, i.e. if $M\in Hol(\ddA)$, then for any $p$, $\C[z]_{(z-p)}\otimes_{\C[z]} M$ is a finitely generated $\C[z]_{(z-p)}$-module.
\end{proposition}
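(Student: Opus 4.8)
The plan is to exploit the Mellin transform, which turns $M$ into a holonomic $\D$-module $\M^{-1}(M)$ on $\Gm$, together with the fact that over a formal (or local) disk difference modules are easier to control because $\tau$ becomes an isomorphism between neighboring stalks. However, the cleanest route is probably the direct one via the generator structure of $M$: since $M$ is finitely generated over $\ddA = \C[z]\langle\tau,\tau^{-1}\rangle$, pick generators $m_1,\dots,m_k$. The submodule $N = \C[z]\cdot\{m_1,\dots,m_k\}$ is a finitely generated $\C[z]$-module, and $M = \sum_{n\in\Z} \tau^n N$. The key point is that each element $m_i$ is annihilated by some nonzero $P_i \in \ddA$; writing $P_i = \sum_{j=a_i}^{b_i} q_{ij}(z)\tau^j$ with $q_{i,a_i}, q_{i,b_i}\neq 0$, the relation $P_i m_i = 0$ lets one express $\tau^{b_i} m_i$ (after multiplying by $\tau^{-a_i}$ if $a_i\le 0$, or directly) as a $\C[z,\tau^{-1}]$-combination — or rather a $\C(z)\langle\tau,\tau^{-1}\rangle$-combination after inverting $q_{i,b_i}$ — of $\tau^{a_i}m_i,\dots,\tau^{b_i-1}m_i$.

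First I would set up this reduction carefully: over the ring $\C(z)\langle\tau,\tau^{-1}\rangle$ (localizing at all of $\C[z]\setminus\{0\}$), the module $\C(z)\otimes_{\C[z]} M$ is finite-dimensional over $\C(z)$ — this is essentially the statement that a holonomic difference module is generically a d-connection, and follows from the annihilation hypothesis exactly as in the Weyl-algebra / $D$-module case. Concretely, each $m_i$ generates a $\C(z)\langle\tau,\tau^{-1}\rangle$-submodule of rank $\le b_i - a_i$, so $\dim_{\C(z)} \C(z)\otimes M \le \sum_i (b_i-a_i) =: r < \infty$. Then I would choose elements $e_1,\dots,e_r\in M$ whose images form a $\C(z)$-basis of $\C(z)\otimes M$, and let $V\subseteq M$ be the $\C[z]$-submodule they generate.

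Next, localize at the prime $(z-p)$: I claim $\C[z]_{(z-p)}\otimes_{\C[z]} M$ is generated over $\C[z]_{(z-p)}$ by finitely many of the $\tau^n e_j$. Indeed, for each $i$ the relation $P_i m_i = 0$ shows that $\tau^{b_i} m_i$ lies in $\frac{1}{q_{i,b_i}(z)}\cdot(\text{span of lower }\tau\text{-powers})$; iterating in both directions, every $\tau^n m_i$ is a $\C(z)$-combination of $\tau^{a_i}m_i,\dots,\tau^{b_i-1}m_i$ whose \emph{denominators} are products of integer translates $q_{ij}(z - \ell)$ of the leading/trailing coefficients. The crucial finiteness input: for a fixed orbit $p+\Z$, only finitely many of these translates $q_{ij}(z-\ell)$ vanish at $z=p$ (since $q_{ij}$ has finitely many roots, each contributing finitely many $\ell$ with $q_{ij}(p-\ell)=0$). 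Hence there is $N_0$ such that for $|n|>N_0$ the passage from $\tau^n m_i$ to the "central block" $\tau^{a_i}m_i,\dots,\tau^{b_i+N_0}m_i$ only ever divides by units of $\C[z]_{(z-p)}$. Therefore $\C[z]_{(z-p)}\otimes M$ is generated by the (finitely many) images of $\tau^n m_i$ with $|n|\le N_0 + \max|b_i|$, hence is finitely generated over the Noetherian ring $\C[z]_{(z-p)}$.

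The main obstacle is making the last step's bookkeeping of denominators uniform: one must track that, after localizing at $(z-p)$, reducing $\tau^n m_i$ down to a bounded window of $\tau$-shifts only introduces denominators $q_{ij}(z-\ell)$ with $\ell$ outside a fixed finite set, so that these denominators are units in $\C[z]_{(z-p)}$. This is where the holonomicity (each element killed by a nonzero operator, giving a two-sided recursion with controlled leading and trailing coefficients) is used essentially, and it is exactly what fails for a general, non-holonomic $\ddA$-module — as the $\delta_0$-type examples in the text show, torsion can accumulate along an orbit only when no such single annihilating operator exists. Once the finite generating set is produced, finite generation of the stalk is immediate.
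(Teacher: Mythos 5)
Your proof is correct and takes essentially the same route as the paper: a two-sided recursion coming from an annihilating operator of each generator, plus the observation that the integer translates of the leading and trailing coefficients are units in $\C[z]_{(z-p)}$ for all but finitely many shifts, so finitely many $\tau^n m_i$ generate the stalk. The only differences are organizational --- the paper first treats the cyclic case and then inducts on (finite) length, whereas you handle all generators at once, and your middle paragraph introducing a generic $\C(z)$-basis $e_1,\dots,e_r$ is an unused detour since the final step works directly with the $m_i$'s.
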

\begin{proof}
Let us first prove it for a cyclic $\ddA$-module (actually all holonomic modules are cyclic, but we will not need this fact here). Let $M$ be generated by an element $s$. If $M$ is holonomic, then $s$ is annihilated by a nonzero element of $Q\in \ddA$, which after multiplying by a suitable power of $\tau$ can be written as $Q =\sum_{i=0}^{n} P_i(z) \tau^i$, where $P_n,P_0\neq 0$ and $n\ge 0$. Choose some $N\gg 0$ such that $(z-p)\centernot| P_0(z-m)P_n(z-m)$ whenever $|m|\ge N$. Then $P_0(z-m)$ and $P_n(z-m)$ are invertible in $\C[z]_{(z-p)}$ and the following identities hold:
\begin{align*}
\tau^{m+n}s &=\frac{-1}{P_n(z-m)} \sum_{i=0}^{n-1} P_i(z-m) \tau^{i+m}s; & 
\tau^{-m}s &=\frac{-1}{P_0(z+m)} \sum_{i=1}^{n} P_i(z+m) \tau^{i-m}s.
\end{align*}
This implies that the finite set $\{\tau^{1-N}s,\ldots ,\tau^{N+n-1}s\}$ generates the stalk $\C[z]_{(z-p)}\otimes_{\C[z]} M$ over $\C[z]_{(z-p)}$. Note that the proof holds if $n=0$, in which case the sums above become empty.

If $M$ is not cyclic, then the statement follows by induction on the length.
\end{proof}

\begin{corollary}\label{finiteRank}
A holonomic $\ddA$-module has finite generic rank, i.e. if $M\in \Hol(\ddA)$, $\C(z)\otimes_{\C[z]} M$ is finite dimensional over $\C(z)$.
\end{corollary}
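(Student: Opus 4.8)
The plan is to mimic the proof of Proposition \ref{prop:finiteStalks}, replacing the local ring $\C[z]_{(z-p)}$ by the function field $\C(z)$. The argument actually becomes simpler, because every nonzero polynomial is automatically invertible in $\C(z)$, so there is no longer any need to choose a bound $N$.

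First I would reduce to the cyclic case. Since $M$ is holonomic it is finitely generated over $\ddA$, say by $s_1,\dots,s_r$. Each submodule $\ddA s_j\subseteq M$ is cyclic, and it is again holonomic: it is generated by one element, and every one of its elements is annihilated by a nonzero operator, this being inherited from $M$. As $M$ is a quotient of $\bigoplus_{j} \ddA s_j$ and the functor $\C(z)\otimes_{\C[z]} -$ is right exact, it suffices to bound the generic rank of a cyclic holonomic module. (Alternatively one can invoke the fact, noted above, that holonomic modules have finite length and induct, since $\C(z)\otimes_{\C[z]}-$ is exact.)

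So let $M=\ddA s$ with $s$ annihilated by a nonzero $Q\in\ddA$, which after multiplying by a power of $\tau$ we write as $Q=\sum_{i=0}^n P_i(z)\tau^i$ with $P_0,P_n\neq 0$. In $\C(z)\otimes_{\C[z]} M$ every polynomial $P_0(z\pm m)$ and $P_n(z\pm m)$ is invertible. Left-multiplying the relation $Qs=0$ by $\tau^m$ and by $\tau^{-m}$ respectively, and using $\tau^m P(z)=P(z-m)\tau^m$, one gets for every $m\ge 0$ the identities
\[
\tau^{n+m}s=\frac{-1}{P_n(z-m)}\sum_{i=0}^{n-1}P_i(z-m)\,\tau^{i+m}s,
\qquad
\tau^{-m}s=\frac{-1}{P_0(z+m)}\sum_{i=1}^{n}P_i(z+m)\,\tau^{i-m}s,
\]
exactly as in Proposition \ref{prop:finiteStalks}, but now with coefficients in $\C(z)$. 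An induction on $m$ then shows that every $\tau^k s$ with $k\in\Z$ lies in the $\C(z)$-span of $\{s,\tau s,\dots,\tau^{n-1}s\}$; since the elements $\tau^k s$ generate $M$ over $\C[z]$, we conclude that $\C(z)\otimes_{\C[z]} M$ is spanned by $n$ elements, hence is finite dimensional.

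There is no real obstacle here: the corollary is essentially a restatement of the computation already carried out for Proposition \ref{prop:finiteStalks}. The only points needing care are the bookkeeping with the twisted commutation relation $\tau z=(z-1)\tau$ when moving powers of $\tau$ past polynomial coefficients, and checking that the reduction to the cyclic case is legitimate, i.e.\ that a submodule of a holonomic module generated by a single element is again holonomic and that $\C(z)\otimes_{\C[z]}-$ is right exact.
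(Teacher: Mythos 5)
Your proof is correct, but it redoes work the paper has already done: you essentially reprove Proposition~\ref{prop:finiteStalks} at the generic point instead of deducing the corollary from it. The statement is labelled a \emph{corollary} because it follows from Proposition~\ref{prop:finiteStalks} by a single further localization: for any $p\in\A^1$ there is a natural isomorphism
\[
\C(z)\otimes_{\C[z]}M \;\cong\; \C(z)\otimes_{\C[z]_{(z-p)}}\left(\C[z]_{(z-p)}\otimes_{\C[z]}M\right),
\]
and the right-hand side is the localization of a finitely generated $\C[z]_{(z-p)}$-module (Proposition~\ref{prop:finiteStalks}) at its generic point, hence a finite-dimensional $\C(z)$-vector space. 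Your version, running the $\tau$-recursion directly over $\C(z)$, is sound --- the reduction to the cyclic case via the surjection $\bigoplus_j\ddA s_j\twoheadrightarrow M$ and right-exactness of $\C(z)\otimes_{\C[z]}-$ is fine, and in $\C(z)$ the invertibility of $P_0(z\pm m)$ and $P_n(z\pm m)$ needs no bound on $m$, which is the simplification you note. So nothing is wrong; you have just repeated the proof of the proposition in a setting where the deduction could have been immediate.
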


\begin{remark}
If a module $M$ is finitely generated over $\ddA$, then the converse to the corollary above is also true. If $M$ has finite generic rank, then it cannot contain $\ddA\cong \bigoplus_{i\in \Z}  \C[z]\tau^i$ as a submodule, because it has infinite rank, and therefore every element is torsion.
\end{remark}

\subsection{Difference modules on the punctured affine line}\label{sec:ddAp}
One of our goals is to relate difference modules on the affine line to difference modules on the punctured affine line. Due to the action of $\Z$, instead of removing a single point from the line, one must remove a whole $\Z$-orbit $p+\Z$. In the sequel, we will let $p\in \A^1$ be fixed, and we will let $\As = \A^1\setminus (p+\Z) = \Spec \C\left[z,\frac{1}{z-p},\frac{1}{z-p\pm 1},\ldots 
\right]$.
\begin{definition}Difference modules on the punctured line $\A^1$ are defined to be left modules over the ring
\[
\ddAp = \C\left[ z,\left\{\frac{1}{z-p-i}\right\}_{i\in \Z} \right]\langle \tau,\tau^{-1}\rangle.
\]
A difference module is said to be \textbf{holonomic} if it is finitely generated and every element is annihilated by a nonzero $f\in \ddAp$. We denote the categories of difference modules and holonomic modules on the punctured line by $\Mod(\ddAp)$ and $\Hol(\ddAp)$, respectively.
\end{definition}

\begin{definition}
Define the \textbf{restriction to the punctured line} functor $|_\As:\Mod(\ddA)\longrightarrow \Mod(\ddAp)$ as follows: for $M\in \Hol(\ddA)$,
\[
M|_\As = \C\left[z,\frac{1}{z-p}\right]\otimes_{\C[z]} M.
\]
The action of $\tau$ on $M|_\As$ is defined by $\tau(P(z)\otimes m)= P(z-1)\otimes \tau m$. It makes $M|_\As$ into a left $\ddAp$-module. Note that $(z-p-i)$ acts as a unit on $M|_\As$ for any $i\in \Z$, since $(z-p-i)=\tau^i(z-p)\tau^{-i}$.
\end{definition}


\begin{remark}
If $M$ is holonomic, then $M|_\As$ is holonomic, so $|_\As$ gives rise to a functor $|_\As:\Hol(\ddA)\longrightarrow \Hol(\ddAp)$.
\end{remark}

\subsection{Coherent subsheaves of difference modules}

For a holonomic difference module, we will repeatedly make use of its finitely generated $\C[z]$-submodules, particularly the ones that are generically equal to the given module.

\begin{definition}\label{Def:NotReallyLattices}
Let $M\in \Mod(\ddA)$. We define the set $\Gl(M)$ to be the set of $\C[z]$-submodules $L\subseteq M$ such that $L$ is finitely generated over $\C[z]$ and $M/L$ is a torsion $\C[z]$-module.
\end{definition}

\begin{obs}\label{obs:glattices}
$\Gl(M)$ is nonempty whenever $M\in \Hol(\ddA)$ or $\Hol(\ddAp)$, by Corollary \ref{finiteRank}, or when $M$ is a finitely generated $\C(z)$-vector space. If $\Gl(M)\neq \emptyset$, every finite subset of $M$ is contained in some $L\in \Gl(M)$.
\end{obs}

\begin{definition}
Let $M\in \Hol(\ddA)$ and let $L\in \Gl(M)$. We define the \textbf{zeroes} of $L$ as the finite set $Z_L= \supp \displaystyle\frac{\tau^{-1}L}{L\cap \tau^{-1} L} = \supp \displaystyle\frac{L +\tau^{-1} L}{L}$, and the \textbf{poles} of $L$ as the finite set $P_L = \supp \displaystyle\frac{L+\tau^{-1} L}{\tau^{-1}L} = \supp \displaystyle\frac{ L}{L\cap \tau^{-1} L}$.
\end{definition}

\begin{remark}
If $L$ and $M$ come from a matrix $A$ via Construction~\ref{con:diffEq}, then $P_L$ is the set of poles of $A$ and $Z_L$ is the set of zeroes. Both of these sets are finite because they are the supports of finitely generated torsion $\C[z]$-modules.
\end{remark}

\begin{lemma}\label{lem:glattices}
Let $M\in \Hol(\ddA)$ and $L\in \Gl(M)$. Then $M/L$ is supported on finitely many orbits.
\end{lemma}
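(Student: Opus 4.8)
The plan is to reduce to the case that $L$ generates $M$ as a $\ddA$-module, and then to filter the $\ddA$-span of $L$ by the finitely generated $\C[z]$-submodules $L+\tau L+\dots+\tau^kL$, whose successive quotients turn out to be controlled by the finite sets $P_L$ and $Z_L$.

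First I would check that the conclusion is insensitive to the choice of $L\in\Gl(M)$. If $L,L'\in\Gl(M)$ then $L\cap L'\in\Gl(M)$ as well: it is finitely generated over $\C[z]$ by Noetherianity, and $M/(L\cap L')$ embeds into $M/L\oplus M/L'$, hence is torsion. The surjection $M/(L\cap L')\twoheadrightarrow M/L$ gives $\supp(M/L)\subseteq\supp(M/(L\cap L'))$, while the exact sequence $0\to L'/(L\cap L')\to M/(L\cap L')\to M/L'\to 0$, together with the identification $L'/(L\cap L')\cong(L+L')/L'\subseteq M/L'$ of a finitely generated torsion $\C[z]$-module (hence of finite support), gives $\supp(M/(L\cap L'))\subseteq(\text{finite set})\cup\supp(M/L')$. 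So $\supp(M/L)$ and $\supp(M/L')$ differ by a finite set, and it suffices to prove the statement for one convenient choice of $L$. Since $M$ is holonomic it is finitely generated over $\ddA$, so by the Observation following Definition~\ref{Def:NotReallyLattices} I may pick $L\in\Gl(M)$ containing a finite $\ddA$-generating set of $M$, whence $M=\ddA L=\sum_{k\in\Z}\tau^kL$.

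Now write $M=M_++M_-$ with $M_\pm=\sum_{\pm k\ge 0}\tau^kL$, and filter $M_+$ by $G_k:=L+\tau L+\dots+\tau^kL$. Using only that $\tau^{k-1}L\subseteq G_{k-1}$, the quotient $G_k/G_{k-1}\cong\tau^kL/(\tau^kL\cap G_{k-1})$ is a quotient of $\tau^kL/(\tau^kL\cap\tau^{k-1}L)=\tau^{k-1}\bigl(\tau L/(\tau L\cap L)\bigr)$. Since $\tau L/(\tau L\cap L)\cong(L+\tau L)/L$ has support $P_L$, and since $f(z)\tau=\tau f(z+1)$ forces the operator $\tau$ to shift supports of $\C[z]$-submodules of $M$ by $1$, this quotient is supported inside $P_L+(k-1)$. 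Hence $\supp(M_+/L)\subseteq P_L+\Z_{\ge 0}$, and the mirror argument with $\tau^{-1}$ gives $\supp(M_-/L)\subseteq Z_L-\Z_{\ge 0}$. Finally $M/M_+\cong M_-/(M_-\cap M_+)$ is a quotient of $M_-/L$, so the exact sequence $0\to M_+/L\to M/L\to M/M_+\to 0$ yields $\supp(M/L)\subseteq(P_L\cup Z_L)+\Z$, a finite union of $\Z$-orbits because $P_L$ and $Z_L$ are finite.

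I do not expect a genuine obstacle; the two points demanding care are (i) that $M/L$ need not be finitely generated over $\C[z]$ — which is exactly why the statement is not a triviality about supports of finitely generated modules — and (ii) the elementary fact that $\tau^j$ translates the support of a $\C[z]$-submodule of $M$ by $j$, which comes from $f(z)\tau=\tau f(z+1)$. An alternative route would be to reduce to $M$ cyclic (every holonomic $\ddA$-module is cyclic, as noted in the proof of Proposition~\ref{prop:finiteStalks}), take $s$ a cyclic generator annihilated by $Q=\sum_{i=0}^{n}P_i(z)\tau^i$ with $P_0,P_n\neq 0$, and let $L$ be the $\C[z]$-span of $s,\tau s,\dots,\tau^{n-1}s$; the identities displayed in that proof then show that $M/L$ has vanishing stalk outside the orbits of the finitely many roots of $P_0P_n$. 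The filtration argument above, however, uses neither the structure theorem nor a recomputation of those identities.
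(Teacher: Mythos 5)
Your proof is correct and uses essentially the same filtration $L \subseteq L+\tau L \subseteq L+\tau L+\tau^2 L \subseteq \cdots$ and the same identification of the successive quotients with translates of $(L+\tau L)/L$ and $L/(L\cap\tau L)$ that the paper uses. The one genuine difference is your opening reduction: the paper's proof closes by invoking $M=\sum_{n\in\Z}\tau^n L$, which is \emph{not} automatic for an arbitrary $L\in\Gl(M)$ (e.g.\ take $M=\ddA/\ddA z$ and $L=0$), and the paper does not address this. Your preliminary argument that $\supp(M/L)$ and $\supp(M/L')$ differ by a finite set for any $L,L'\in\Gl(M)$, allowing you to pick $L$ containing a $\ddA$-generating set, supplies exactly the step the paper leaves implicit, so your version is actually the more complete one. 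One harmless slip: in the reduction you write $L'/(L\cap L')\cong(L+L')/L'\subseteq M/L'$, but the second isomorphism theorem gives $L'/(L\cap L')\cong(L+L')/L\subseteq M/L$; this does not affect your argument, since the only fact you use is that $L'/(L\cap L')$ is a finitely generated torsion $\C[z]$-module, hence of finite support.
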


\begin{proof}
Two finitely generated modules that agree over $\C(z)$ are equal away from a finite set. Therefore, it is enough to prove the statement for any $L\in \Gl(M)$.

Let $L\in \Gl(M)$ be chosen such that it contains a finite generating set of $M$ over $\ddA$. We will prove by induction that $L_n=(L+\tau L+\cdots +\tau^n L)/L$ is supported on $Z_L+P_L+\Z$ (actually on $P_L+\Z_{\ge 0}$). We use the following short exact sequence, together with the fact that the support of a module is contained in the union of the supports of a submodule and quotient:
\[0\longrightarrow \frac{L+\cdots +\tau^{n-1} L}{L }\longrightarrow \frac{L+\cdots +\tau^n L}{L }\longrightarrow \frac{L+\cdots +\tau^n L}{L +\cdots \tau^{n-1} L}\longrightarrow 0.
\]
We note that $\frac{\tau^{n-1}L+\tau^nL}{\tau^{n-1}L}$ surjects onto $\frac{L+\cdots +\tau^n L}{L +\cdots +\tau^{n-1} L}$
, so the support on the latter is contained in the support of the former. Further, we note that $\frac{\tau^{n-1}L+\tau^nL}{\tau^{n-1}L} = \tau^{n} \frac{L+\tau^{-1} L}{\tau^{-1} L}$, which implies that $\supp \frac{\tau^{n-1}L+\tau^nL}{\tau^{n-1}L}= P_L+n$. The induction hypothesis then shows that $\supp \frac{L+\cdots +\cdots +\tau^n L}{L}\subset P_L+\Z_{\ge 0}$.

The analogous reasoning yields the same result for negative $n$'s, and these together show the desired result, since $M=\sum_{n\in \Z} \tau^n L$.
\end{proof}

\subsection{The intermediate extension}\label{sec:intermediateExtension}

One of the first questions one can ask is whether any d-connection, or more generally any holonomic $\ddAp$-module can be extended over the puncture to a $\ddA$-module in some canonical way. For a $D$-module, there are three answers, namely $j_*$, $j_!$ and $j_{!*}$, whose definitions can be found in \cite{takeuchi2007}, for example.


 
For difference modules, we have $j_*$, the forgetful functor, which has the disadvantage that it does not preserve holonomic modules. However, the intermediate extension $j_{!*}$ does have a difference analogue, which preserves holonomicity. It can be constructed as the smallest $\ddA$-module contained in a given $\ddAp$-module that only differs from it at $p+\Z$.

\begin{construction}\label{Con:GMExtension}
Let $M\in \Hol(\ddAp)$. The intermediate extension of $M$, denoted $j_{!*}M$ or $j_{!*}^{p+\Z}M$, is constructed as follows: Consider some $L\in \Gl(M)$ such that $P_L \cap (Z_L + \Z_{> 0}) = \emptyset$. We will call such submodules \textbf{austere}\footnote{Because they dispense with unnecessary elements.}.

Then $j_{!*}M$ is defined as the $\ddA$-submodule of $M$ generated by all the subspaces $P(z)^{-1}L$ for all polynomials $P(z)$ with no roots in $p+\Z$, and $L\in \Gl(M)$ can be arbitrary provided it is austere. In other words, if $L$ is chosen to agree with $M$ outside of $p+\Z$, then $L$ generates $j_{!*}M$.
\end{construction}

\begin{proposition}\label{Prop:GMExtensionWellDef}\leavevmode\begin{enumerate}
\item Any holonomic module $M$ contains austere submodules $L\in \Gl(M)$. Furthermore, any $W\in \Gl(M)$ contains a submodule $L\in \Gl(M)$ that is austere.
\item Any two austere submodules $V\in \Gl(M)$ generate the same module $j_{!*}M$ by Construction \ref{Con:GMExtension}.
\end{enumerate}
\end{proposition}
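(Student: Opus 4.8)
The plan is to reduce everything to statements about stalks at points of $p+\Z$. The key observation is that for $L\in \Gl(M)$, the conditions ``$L$ is austere'' and ``$j_{!*}M$ is generated by $L$'' only depend on the collection of stalks $L_{p+i}$ for $i\in \Z$; away from $p+\Z$ every $L\in \Gl(M)$ already has the same stalks as $M$ (since $z-p-i$ is a unit on $M$ there, so $L_{p+i'}=M_{p+i'}$ for $i'\notin\Z$ up to the finitely many other points in $\supp M/L$, which we can always enlarge $L$ to absorb). So I would first record the dictionary: a $\ddA$-submodule of $M$ whose stalks agree with $M$'s off $p+\Z$ is determined by the $\C[z]_{(z-p-i)}$-submodules it cuts out at each $p+i$, and because $\tau$ permutes these stalks, such a submodule is $\tau$-stable iff the stalk datum is a single $\tau$-orbit, i.e. determined by the stalk at $p$ together with how $\tau$ moves it. Under this dictionary, austerity of $L$ says exactly that $\tau$ maps the stalk $L_p$ \emph{into} $L_{p-1}$ at all but finitely many points and conversely — more precisely that $P_L$ and $Z_L$ are separated by the orbit as stated — and $j_{!*}M$ is the submodule whose stalk at each $p+i$ is $L_{p+i}$.

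For part (1), existence: start with any $W\in \Gl(M)$ (which exists by Corollary \ref{finiteRank} and the following observation). By Lemma \ref{lem:glattices}, $M/W$ is supported on finitely many orbits, and on the orbit $p+\Z$ the poles $P_W+\Z$ and zeroes $Z_W+\Z$ meet $p+\Z$ in finitely many points. The idea is to ``push forward'' along $\tau$: replace $W$ by $W' = W\cap \tau^{-1}W \cap \cdots$ or rather modify $W$ only at the finitely many bad points of $p+\Z$, shrinking the stalk $W_{p+i}$ for $i$ in the offending range to $W_{p+i}\cap \tau W_{p+i+1}\cap\cdots$ so as to kill the overlap $P_L\cap(Z_L+\Z_{>0})$, while leaving all other stalks (in particular off $p+\Z$) untouched. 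One has to check this finite modification lands us back in $\Gl(M)$ (still finitely generated over $\C[z]$, still generically all of $M$) — that's routine since we only changed finitely many stalks and each is still a full-rank lattice — and that the resulting $L$ is austere and contained in $W$. The containment ``any $W$ contains an austere $L$'' is then immediate from the construction; a separate small argument handles other orbits in $\supp M/W$ if one wants $L$ genuinely inside $W$ everywhere, but since austerity is only a condition at $p+\Z$ we may as well shrink at those finitely many other orbits too, or simply note they are irrelevant.

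For part (2), independence: let $V, L\in \Gl(M)$ both be austere. The submodule they generate by Construction \ref{Con:GMExtension} has, at each $p+i$, the stalk equal to $V_{p+i}$ (resp. $L_{p+i}$), and off $p+\Z$ equals $M$. So it suffices to show $V_{p+i}=L_{p+i}$ as submodules of $M_{p+i}$ for all $i$ — or rather that the $\ddA$-submodules they generate coincide, which by $\tau$-equivariance reduces to comparing the two submodules $\sum_{i}\tau^i V$ and $\sum_i \tau^i L$ of $M$. Here is where I would use austerity crucially: I claim austerity forces $\sum_i \tau^i V$ to be \emph{the} minimal $\ddA$-submodule of $M$ that contains a full-rank $\C[z]$-lattice and agrees with $M$ off $p+\Z$. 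Given two austere $V, L$, take a common refinement $W\subseteq V\cap L$ with $W\in\Gl(M)$; apply (1) to get an austere $W'\subseteq W$. Then $\sum\tau^i W' \subseteq \sum\tau^i V$ and similarly for $L$, and I will argue these inclusions are equalities by a stalk count: austerity of $V$ means that for $i\gg 0$, $\tau^i V_p = V_{p-i}$ exactly (no further poles or zeroes appear), so the stalks of $\sum\tau^j V$ stabilize to those of $V$ shifted, and the same stabilized stalks are reached from $W'$ because $W'$ is austere and generically equal to $V$; comparing at each point of $p+\Z$ gives $(\sum\tau^jV)_{p+i}=(\sum\tau^j W')_{p+i}$, hence equality of the $\ddA$-modules. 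Running the same argument with $L$ finishes it.

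The main obstacle I anticipate is part (2), specifically making rigorous the claim that austerity pins down the stalks of the generated $\ddA$-module uniquely. The subtlety is that $\sum_i\tau^i V$ is an increasing union, and one must control how zeroes and poles propagate under repeated application of $\tau$ — this is exactly the content of Lemma \ref{lem:glattices}, so I expect the proof of (2) to be an inductive stalk-wise bookkeeping argument feeding on that lemma, checking that the austerity hypothesis $P_V\cap(Z_V+\Z_{>0})=\emptyset$ is precisely what prevents new zeroes/poles from being created as one saturates, so that the limiting stalks are forced. Part (1) should be comparatively mechanical once the finite-modification-of-stalks language is set up.
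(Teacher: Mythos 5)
Your proposal for part (1) is recognizably the paper's argument: the paper iterates the explicit operations $W\mapsto \tau^{-1}W\cap W$ (which it shows satisfies $P_{W'}\subseteq P_W-1$ and $Z_{W'}\subseteq Z_W$) and $W\mapsto \tau W\cap W$ (which satisfies $P_{W''}\subseteq P_W$ and $Z_{W''}\subseteq Z_W+1$), until the poles are separated to the left of the zeroes. Your ``finite modification of stalks'' variant would need the routine check you wave at, but the $W\cap\tau^{-1}W$ idea that you float first is exactly what the paper does, and it sidesteps the need to verify that a hand-modified collection of stalks reassembles into a genuine element of $\Gl(M)$.

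The real issue is part (2), and you have correctly flagged it as the obstacle without resolving it. Your ``stalk count'' rests on the claim that ``austerity of $V$ means that for $i\gg 0$, $\tau^i V_p = V_{p-i}$ exactly,'' so that the stalks of $\sum\tau^j V$ stabilize to those of $V$. Beyond the notational slip ($\tau^i V_p$ lives in $M_{p+i}$, not $M_{p-i}$), the underlying assertion is false: the stalk of the generated $\ddA$-module at $p+i$ is $\sum_j \tau^j V_{p+i-j}$, and this is generically \emph{strictly larger} than $V_{p+i}$ as soon as $V$ has any pole or zero on $p+\Z$; austerity does not kill these contributions, it only ensures they are the \emph{same} for all austere $V$. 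So the stabilized stalks are not ``those of $V$ shifted,'' and the comparison argument as written does not go through. The paper resolves (2) by a genuinely different route that avoids computing the stabilized stalks at all: after arranging $L\subset L'$ with $L'/L$ supported on $p+\Z$, it takes the quotient $(L'+j_{!*}M)/j_{!*}M$, observes this is a finite-length $\C[z]$-module supported on $p+\Z$, and argues that any \emph{nonzero} such quotient must exhibit a pole strictly to the right of a zero, contradicting the austerity inherited from $L'$. That quotient-by-$j_{!*}M$ step — reducing independence to a nonexistence statement about finite torsion modules — is the missing idea in your proposal.
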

\begin{proof}
\begin{enumerate}
\item Consider any submodule $W\in \Gl( M)$. We claim that for the submodule $W' = \tau^{-1} W\cap W\in \Gl(M)$, its poles satisfy $P_{W'}\subseteq P_W-1$: Indeed
\[
\frac{W'}{\tau^{-1} W'\cap W'} = \frac{\tau^{-1}W\cap  W}{\tau^{-2} W \cap \tau^{-1}W\cap  W} \subseteq \frac{\tau^{-1}W}{\tau^{-2} W\cap \tau^{-1}W}.
\]
And further $\supp \frac{\tau^{-1}W}{\tau^{-2} W\cap \tau^{-1}W} =-1+\supp \tau\left( \frac{\tau^{-1}W}{\tau^{-2} W\cap \tau^{-1}W}\right) =
P_W-1$. Turning to the set of zeroes, we see that $Z_W'\subseteq Z_W$, since
\[
\frac{W'}{\tau^{-1} W'\cap  W'} = \frac{\tau^{-1}W\cap W}{\tau^{-2}W\cap \tau^{-1} W\cap  W}\subseteq \frac{\tau^{-1} W}{\tau^{-1} W\cap  W}.
\]
Thus iterating this process shifts the poles of the submodule to the left, while the zeroes do not move at all. Analogously, considering the submodule $W'' = \tau W\cap W$, one can check that $P_{W''}\subseteq P_W$ and $Z_{W''}\subseteq Z_W +1$, which allows to move the zeroes to the right while keeping the poles in place. This process of shifting the zeroes and poles must reach a submodule $V$ which is austere.

\item Let $L,L'\in \Gl(M)$ be austere. By virtue of the first part of this proposition, without loss of generality we may assume that $L\subset L'$ (by choosing a third submodule in $\Gl(M)$ that is contained in $L\cap L'$). We may also assume that $L'/L$ is supported on $p+\Z$, since modifying $L$ away from $p+\Z$ doesn't affect the construction of $j_{!*}$. For the time being, we will let $j_{!*}M$ be the module generated from $L$ by the procedure above. We will show that $L'\subseteq j_{!*}M$.

We may take the quotient by the $\ddA$-module $j_{!*}M$. Then, $(L'+j_{!*}M)/j_{!*}M$ is a finite dimensional module supported on $p+\Z$. If $(L'+j_{!*}M)/j_{!*}M$ is nonzero, it has some pole to the right of some zero: if the point in the support of $(L'+j_{!*}M)/j_{!*}M$ with the biggest (resp. smallest) real part is $z_1$ (resp. $z_0$), then $z_1$ is a pole and $z_0-1$ is a zero. This contradicts the assumption that $L'$, and therefore $(L'+j_{!*}M)/j_{!*}M$, is austere.

\end{enumerate}

\end{proof}


\begin{proposition}\label{Prop:GMExtensionProps}
Let $M\in \Hol(\ddAp)$. Then the following hold.
\begin{enumerate}

\item $(j_{!*}M)|_\As = M$.

\item The intermediate extension has no nonzero submodules or quotient modules with support contained in $p+\Z$.

\item Out of the modules contained in $M$, $j_{!*}M$ is the smallest $\ddA$-module $N$ such that $N|_\As = M$.

\item If $N$ is a $\ddA$-module such that $N|_\As\cong M$ and $N$ has no nonzero submodules or quotients supported on $p+\Z$, then the map $\phi:N\to N|_\As \to M$ factors through an isomorphism $N\to j_{!*}M$.

\item The functor $j_{!*}$ is fully faithful.
\end{enumerate}
\end{proposition}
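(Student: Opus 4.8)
The plan is to prove the four items in order, leveraging the stalk description of $j_{!*}M$ from Construction \ref{Con:GMExtension}: its stalk at any point of $p+\Z$ equals the stalk of an austere $L\in\Gl(M)$, and its stalk away from $p+\Z$ equals $M$'s stalk.

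For item (1), I would observe that localizing $j_{!*}M$ at $z-p$ inverts all the factors $z-p-i$, so every generator $P(z)^{-1}L$ already lies inside $M|_\As = \C[z,\tfrac{1}{z-p}]\otimes_{\C[z]}M$, and conversely since $L\in\Gl(M)$ generates $M$ over $\ddAp$ after inverting those factors, we recover all of $M|_\As$; hence $(j_{!*}M)|_\As=M$. For item (3), suppose $N\subseteq M$ is a $\ddA$-submodule with $N|_\As=M$. Then $N$ contains some $W\in\Gl(M)$ (take a finitely generated $\C[z]$-submodule of $N$ generically equal to $M$), and by Proposition \ref{Prop:GMExtensionWellDef}(1) $W$ contains an austere $L'\in\Gl(M)$; since $N$ is a $\ddA$-module closed under multiplication by $P(z)^{-1}$ for $P$ with no roots in $p+\Z$ (these act invertibly away from $p+\Z$, but one must check they act on $N$ — this holds because $N\subseteq M$ and such $P(z)$ is invertible on $M|_\As\supseteq$ the relevant stalks), $N$ contains the module generated by $L'$, which by Proposition \ref{Prop:GMExtensionWellDef}(2) is exactly $j_{!*}M$. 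So $j_{!*}M\subseteq N$.

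For item (2), a nonzero submodule $S\subseteq j_{!*}M$ supported on $p+\Z$ would be torsion, so $S\cap L'=0$ would force $S$ to vanish generically — but $S$ supported on $p+\Z$ contributes nothing generically, so instead I argue: $(j_{!*}M)/L'$ is supported on $Z_{L'}$ only to the left (by austerity), i.e. a submodule supported on $p+\Z$ would, via the same "pole to the right of a zero" obstruction used in the proof of Proposition \ref{Prop:GMExtensionWellDef}(2), contradict austerity of $L'$; for the quotient statement, a quotient $j_{!*}M\twoheadrightarrow Q$ with $Q$ supported on $p+\Z$ has kernel $K$ with $K|_\As=M$, so by item (3) $K\supseteq j_{!*}M$, forcing $Q=0$. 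For item (4), given $M,M'\in\Hol(\ddAp)$, a morphism $j_{!*}M\to j_{!*}M'$ restricts via item (1) to a morphism $M\to M'$, giving the faithful part and a map $\Hom(j_{!*}M,j_{!*}M')\to\Hom(M,M')$; for fullness, given $\phi:M\to M'$, I would extend it to $j_{!*}M$ by noting that $\phi$ carries the generators $P(z)^{-1}L$ of $j_{!*}M$ into $P(z)^{-1}\phi(L)\subseteq M'$, and that this image lands in $j_{!*}M'$ because $\phi(L)$ is (contained in) a submodule of $\Gl(M')$ and the $\ddA$-module it generates, being contained in $M'$ with full restriction, contains — wait, I need it to be contained in $j_{!*}M'$, which follows since $\phi(j_{!*}M)$ is a $\ddA$-submodule of $M'$ that has no sub/quotient supported on $p+\Z$ (being a quotient of $j_{!*}M$, by item (2)), and such submodules of $M'$ are forced into $j_{!*}M'$ by the minimality in item (3) applied to $\phi(j_{!*}M)+j_{!*}M'$ together with the austerity obstruction.

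The main obstacle I expect is item (4), the fullness of $j_{!*}$: unlike the $D$-module case where $j_{!*}$ is characterized by the absence of sub/quotients supported on the locus, here I must carefully track that the image of a morphism stays inside the intermediate extension, which really reduces to showing that a $\ddA$-submodule of $M'$ with no sub/quotient supported on $p+\Z$ and full restriction to $\As$ must coincide with $j_{!*}M'$ — a uniqueness statement that I would extract by combining items (2) and (3) with the austerity shifting argument of Proposition \ref{Prop:GMExtensionWellDef}. Items (1)--(3) are essentially bookkeeping with stalks and the $\Gl$ machinery already developed.
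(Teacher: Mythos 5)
Your proposed argument departs from the paper's in its logical ordering (you try to prove (3) directly and then derive (2), whereas the paper proves (2) first and deduces (3) from it), and in that ordering several steps have gaps.

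\textbf{Item (3).} You claim ``$P(z)$ is invertible on $M|_\As \supseteq$ the relevant stalks,'' but this is false: for $P(z)$ with roots away from $p+\Z$, $P(z)$ is \emph{not} a unit of $\ddAp$ and does not act invertibly on $M = M|_\As$ (consider $M=\ddAp$ and $P(z)=z-a$). Consequently ``$N$ is closed under multiplication by $P(z)^{-1}$'' is not justified by the reason you give. The conclusion $P(z)^{-1}L' \subseteq N$ is salvageable, but by a different argument: $N|_\As = M$ forces $M/N$ to be supported on $p+\Z$, and a polynomial $P$ whose roots avoid $p+\Z$ acts injectively on any module supported on $p+\Z$ (coprimality with the annihilators), so $\{m \in M : P(z)m \in N\} = N$. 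The paper sidesteps all of this by establishing (2) first and then, for (3), observing that $j_{!*}M/(N\cap j_{!*}M)$ is supported on $p+\Z$ and hence zero by (2).

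\textbf{Item (2), submodules.} Your argument here is not a proof; the pole-to-the-right-of-a-zero obstruction concerns austere lattices and does not directly say anything about an abstract torsion submodule $S\subseteq j_{!*}M$. The correct argument is immediate and you should notice it: $j_{!*}M \subseteq M$, and $M$ is a $\ddAp$-module, so every $z-p-i$ acts invertibly on $M$, hence $M$ (and so $j_{!*}M$) has no elements supported on $p+\Z$ at all. The quotient half of (2) via (3) is fine once (3) is repaired, but note the paper proves (2) (quotient case) by finding an austere $L\in\Gl(N)$ and invoking Proposition~\ref{Prop:GMExtensionWellDef}(2) to see $L$ already generates all of $j_{!*}M$'s stalks on $p+\Z$, which avoids the circularity worry entirely.

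\textbf{Item (4).} Your argument for fullness is tangled and, as you acknowledge, incomplete. Two separate things are being conflated: (a) that $j_{!*}$ is a functor at all (i.e., that $\phi(j_{!*}M) \subseteq j_{!*}M'$ for any $\phi : M\to M'$), which follows because $\phi(j_{!*}M)/(\phi(j_{!*}M)\cap j_{!*}M')$ is simultaneously a quotient of $j_{!*}M$ and supported on $p+\Z$, hence zero by (2); and (b) that $j_{!*}$ is fully faithful. The paper's route to (b) is cleaner and worth internalizing: since $|_\As \circ j_{!*} \cong \Id$, it suffices to show $|_\As$ is faithful on the image of $j_{!*}$, and this follows because a morphism $f : j_{!*}M \to j_{!*}M'$ with $f|_\As = 0$ has image supported on $p+\Z$, which is zero by (2). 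In short, (2) is the workhorse of the whole proposition, and your ordering fights against that.
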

\begin{proof}
\begin{enumerate}
\item This follows from the construction, since the stalks of $M$ and $j_{!*}M$ are equal away from $p+\Z$.

\item Since $j_{!*}M\subset M$, $j_{!*}M$ has no elements supported on $p+\Z$. Now suppose that $N\subset j_{!*}M$ is a $\C[z]\langle \tau,\tau^{-1}\rangle$-submodule such that $j_{!*}M/N$ is supported on $p+\Z$. By Proposition \ref{Prop:GMExtensionWellDef}, there is an austere $L\in \Gl(N)$, in particular $L\subseteq N$ and $L\in \Gl(M)$. By the second part of said proposition, $L$ generates all the stalks of $j_{!*}M$ at the points of $p+\Z$, and therefore $N=j_{!*}M$.
\item Let $N$ be such a module. Then $j_{!*}M$ and $N$ coincide outside of $p+\Z$. Therefore, $j_{!*}M/(N\cap j_{!*}M)$ is supported on $p+\Z$, and by the previous part of this proposition, this implies that $j_{!*}M/(N\cap j_{!*}M) =0$, so $j_{!*}M\subseteq N$.
\item First of all, $\phi$ is injective: the kernel of $N\to N|_\As$ is supported on $p+\Z$, so it must vanish. The image of $\phi$ must contain $j_{!*}M$, by the minimality of $j_{!*}M$. Finally, The quotient $N/j_{!*}M$ vanishes after applying $|_\As$, so it is supported on $p+\Z$, so it must vanish by hypothesis.
\item Since $|_\As \circ j_{!*}\cong \Id$, it is enough to show that $|_\As$ is faithful on the essential image of $j_{!*}$. Suppose a map $f:j_{!*}M\to j_{!*}N$ is such that $f|_\As =0$. Then the image of $f$ is a torsion submodule of $j_{!*}N$, which implies that its image is $0$. Therefore, $|_\As$ is faithful.
\end{enumerate}
\end{proof}

\section{Restriction to the formal disk}\label{sec:restrictionToDisks}

Throughout this section, we will let $p\in \A^1$ be fixed, and consider its orbit $p+\Z$. We will show how difference modules on $\A^1$ can be recovered from their restriction to $\A^1\setminus (p+\Z) = \As$ and to a neighborhood of $p+\Z$. For that purpose, we will give a definition for a holonomic difference module on $\bigsqcup_{i\in \Z} \pzi$, where $\pzi= \Spec \C[[z-p-i]]$ is the formal neighborhood of $p+i$, and a similar definition for a difference module on $\bigsqcup_{i\in \Z} \pzzi$, where $\pzzi= \Spec \C((z-p-i))$ is the punctured formal neighborhood of $p+i$. This will yield two categories of difference modules, which we will denote $\Loc$ and $\Locc$, respectively. We will also define restriction functors between these categories, which give rise to a commutative diagram:
\begin{equation}\label{eq:Fibersquare}
\begin{tikzcd}[column sep = 5 em,ampersand replacement=\&]
\Hol(\ddA) \arrow[r,"|_{\As}"]\arrow[d,"|_{\pz}"] \& \Hol(\ddAp)\arrow[d,"|_{\pzz}"] \\
\Loc \arrow[r,"|_{\pzz}"] \& \Locc.\\
\end{tikzcd}
\end{equation}

\vspace{-2.5em}

\setcounter{THEOREM}{0}

The main theorem is Theorem~\ref{thm:formalCycles} from the introduction.
\begin{THEOREM}
The diagram (\ref{eq:Fibersquare}) yields an equivalence $\Hol(\ddA)\cong \Loc\times_\Locc \Hol(\ddAp)$.
\end{THEOREM}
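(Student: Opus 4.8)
The plan is to exhibit a functor $\Phi\colon \Loc\times_\Locc \Hol(\ddAp)\to \Hol(\ddA)$ that is quasi-inverse to the functor induced by the square (\ref{eq:Fibersquare}). An object of the fibered product is a pair $(N, (\{F_i\subset G\subset F_i'\})_{i})$ together with an isomorphism between $N|_{\pzz}$ and the restriction of the formal data to the punctured disks; concretely $N\in \Hol(\ddAp)$ and the local data amounts to, over each $\pzi$, a $\C[[z-p-i]]$-module with two finitely generated submodules, glued to $N$ along $\C((z-p-i))$. The reconstruction should mimic the commutative-algebra statement recalled in the introduction: glue the quasicoherent sheaf $N$ on $\As$ with the formal/stalk data at each point of $p+\Z$ along the punctured disks, obtaining a quasicoherent sheaf $M$ on $\A^1$; then transport the $\tau$-action. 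First I would check that $\tau$, which on $N$ is given and which on the formal side sends the data over $\pzi$ isomorphically to the data over $\pzi[i{+}1]$ (here the left/right submodules are exchanged appropriately, matching Construction 1.3), glues to a well-defined isomorphism $M\to T^*M$; this uses the compatibility isomorphism on the punctured disks. Then I would verify holonomicity of $M$: finite generation follows because $N$ is finitely generated over $\ddAp$ and the modification at $p+\Z$ is by finitely generated submodules, and the torsion condition (every element annihilated by a nonzero operator) follows from holonomicity of $N$ together with finite stalks at $p+\Z$, exactly as in Proposition \ref{prop:finiteStalks}.

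Next I would check the two round trips. For $M\in\Hol(\ddA)$, applying the square and then $\Phi$ recovers $M$: away from $p+\Z$ this is the standard fact that $M$ is recovered from $M|_{\As}$ and its stalks at $p+\Z$ glued along punctured stalks (the quasicoherent-sheaf statement), and the $\tau$-action is recovered because $M|_{\As}$ already determines it generically, hence everywhere once the lattices at $p+\Z$ are pinned down; the identification of the two submodules $M|_{\pz}^l, M|_{\pz}^r$ with $(\tau^{\pm n}L)_p$ is precisely Construction 1.3, so it is preserved. For the other direction, $\Phi$ of a compatible datum, restricted back, returns $N$ on $\As$ by construction, and returns the prescribed formal module with its two submodules at $p+\Z$: the underlying $\C[[z-p]]$-module is the completed stalk of $M$ at $p$, and the two submodules are recovered as $(\tau^{\pm n}L)_p$ for $L$ any $\C[z]$-lattice of $M$ — here one must see that a lattice built from $N$ and the chosen submodule $G$ at $p+\Z$ reproduces exactly $F_i, F_i'$ after applying $\tau^{\pm n}$ and completing, which is a matter of unwinding definitions together with Lemma \ref{lem:glattices} (so that a single $L$ controls all of $M/L$ on the orbit). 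Fully faithfulness of the comparison functor then either follows formally from the construction of the quasi-inverse, or can be checked directly: a morphism in the fibered product is a pair of morphisms $N\to N'$ and of formal data agreeing on punctured disks, and gluing gives a unique $\ddA$-morphism $M\to M'$.

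The main obstacle I expect is the careful bookkeeping of the $\tau$-action across the gluing, specifically that the data on $\bigsqcup_i \pzi$ is genuinely $\tau$-equivariant data on the disjoint union of all disks in the orbit (not just a single disk), so that the left/right submodules $M|_{\pz}^l$ and $M|_{\pz}^r$ get exchanged under $\tau$ in the way forced by the fact that multiplying a lattice $L$ by $\tau$ moves its poles but not its zeroes — this is the content tying Construction 1.3 to the definitions of $\Loc$ and $\Locc$ in Section \ref{subsec:RestrDefs}, which are not reproduced in this excerpt. In particular one must confirm that the two submodules are not extra choices but are determined by the glued object, and that the compatibility isomorphism over the punctured disks is exactly what is needed to make the reconstructed $\tau$ an honest isomorphism of quasicoherent sheaves rather than merely a rational one; establishing well-definedness of $M|_{\pz}$ (independence of $L$ and $n$, promised in Construction 1.3) is a prerequisite I would dispatch first. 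Everything else — finite generation, holonomicity, and the two natural isomorphisms — should reduce to the finite-stalk and lattice lemmas already proven, plus the elementary gluing of quasicoherent sheaves.
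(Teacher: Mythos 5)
Your high-level plan --- construct a quasi-inverse by gluing $N\in\Hol(\ddAp)$ with the formal data along the punctured disk, transport the $\tau$-action via $\Z$-equivariance, then check the two composites --- is the same shape as the paper's argument. But there is a genuine gap at the heart of the construction, and a secondary omission.

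The main gap: you describe the reconstruction as a naive Beauville--Laszlo gluing of $N$ with the formal module, but that gluing uses \emph{only} the middle module $M$ of the triple $(M^l,M,M^r)\in\Loc$; the submodules $M^l$ and $M^r$ would never enter, and so the resulting functor could not be essentially surjective. Concretely, take $N$ of generic rank $1$ and $M=\C[[\pi]]$; the triple $(\pi\C[[\pi]],\C[[\pi]],\C[[\pi]])$ and the triple $(\C[[\pi]],\C[[\pi]],\C[[\pi]])$ glue (with the same identification over $\C((\pi))$) to the same quasicoherent sheaf on $\A^1$, so they cannot both be recovered by restricting it back. The paper's construction avoids this by \emph{not} building the module as a gluing of sheaves: it defines
\[
F(M)=\bigl\{\,((m_i)_{i\in\Z},m_\As)\in M_\pz^\Z\oplus M_\As\ :\ m_\As|_\pzzi=m_i|_\pzz;\ m_i\in M_\pz^l\text{ for }i\ll 0;\ m_i\in M_\pz^r\text{ for }i\gg 0\,\bigr\},
\]
so the submodules $M^l,M^r$ appear as \emph{asymptotic conditions} on the formal expansions of a section at the far ends of the orbit. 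This is the essential new idea; the classical quasicoherent gluing alone will not produce it, and you would need to spell out where $M^l,M^r$ enter your construction. Your phrase ``a lattice built from $N$ and the chosen submodule $G$'' hints at this but is not a definition, and the triple notation $(\{F_i\subset G\subset F_i'\})_i$ suggests a family of triples rather than a single object of $\Loc$, which is not what the category is.

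A secondary omission: you anticipate that the round trips ``reduce to the finite-stalk and lattice lemmas already proven,'' but the surjectivity of the comparison in $G\circ F\cong\Id$ (specifically that every basis of $\ov M_\pz$ can be matched to elements coming from $M_\As$) is not a formality and requires the nontrivial factorization $GL_n(\C((\pi)))=GL_n(\C[[\pi]])\,GL_n(\C[\pi,\pi^{-1}])$ (Lemma \ref{lem:matricesOverSeries}). Flagging the $\tau$-bookkeeping as the main risk is right in spirit, but the bookkeeping you should worry about is precisely how $M^l$ and $M^r$ constrain the reconstruction, and the genuine analytic input is this matrix factorization, neither of which your outline currently supplies.
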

Explicitly, the theorem states that the category $\Hol(\A^1)$ is equivalent to the category of triples $(M_\pz,\linebreak M_\As,\cong)$, where $M_\pz \in \Loc$, $M_\As\in \Hol(\ddAp)$, and $\cong$ is an isomorphism $M_\pz|_\pzz \cong M_\As|_\pzz$.

In Section \ref{subsec:RestrDefs} we will give all the definitions and in Section \ref{subsec:RestrProof} we will prove the Theorem.

\subsection{Definitions}\label{subsec:RestrDefs}

Difference equations with support on $p+\Z$ are trivial in the sense that for a $\ddA$-module $M$ which is torsion over $\C[z]$, $\tau$ induces isomorphisms $M_z\to M_{z+1}$. Hence describing these modules amounts to describing $\C[z]$-modules supported at a point. Taking limits, one can argue that difference modules over a formal neighborhood of the orbit $p+\Z$ should be just modules over $\C[[z-p]]$. 
Our definition of difference modules will contain slightly more information than just a module over $\C[[z-p]]$. Throughout, we will consider $p\in \A^1$ to be fixed and we will abbreviate $\pi=z-p$. For a $\C[z]$-module, we will denote $M_p=\C[[\pi]]\otimes_{\C[z]} M$.

\begin{definition}
The \textbf{category of difference modules on a formal disk} $\Mod^{\Z}(\pz)$ is a full subcategory of the category of diagrams $M^l\overset{i^l}\longrightarrow M\overset{i^r}\longleftarrow M^r$ where $M^l$, $M$ and $M^r$ are $\C[[\pi]]$-modules. It is defined as follows:
\begin{itemize}
\item The objects of $\Mod^{\Z}(\pz)$ are diagrams $(M^l \overset{i^l}\to M\overset{i^r}\gets M^r)\in \Mod(\C[[\pi]])$, such that
\begin{enumerate}
\item $M^l$ and $M^r$ are finite rank free $\C[[\pi]]$-modules.
\item $i^l$ and $i^r$ are injections.
\item $M/M^l$ and $M/M^r$ are torsion modules.
\end{enumerate}
\item Morphisms in $\Mod^{\Z}(\pz)$ are morphisms of diagrams of $\C[[\pi]]$-modules. In other words, morphisms $(M^l\to M\gets M^r)\to (N^l\to N \gets N^r)$ are $\C[[\pi]]$-homomorphisms $\phi:M\to N$ such that $\phi(M^l)\subseteq N^l$ and $\phi(M^r)\subseteq N^r$.
\end{itemize}
\end{definition}

We will often omit $i^l$ and $i^r$ when describing an object of $\Mod^{\Z}(\pz)$ and just write $M=(M^l,M,M^r)$, or even omit the reference to $M^l$ and $M^r$ altogether. To avoid repetition, we will use the index $lr$ to mean either $l$ or $r$.

\begin{obs}
The category $\Mod^{\Z}(\pz)$ is additive, but it is not abelian, as the cokernel of a morphism could be an object $(M^l,M,M^r)$ where $M^l$ and/or $M^r$ have torsion. It is, however, an exact category, since it is a subcategory of the (abelian) category of triples of $\C[[\pi]]$-modules, and it is closed by extensions.

For a morphism $\phi$ to be decomposed as an admissible epimorphism followed by an admissible monomorphism, it is necessary and sufficient for $\phi^l$ and $\phi^r$ to have constant rank. As we will prove later, this is the case for any restriction from a morphism in $\Hol(\ddA)$.
\end{obs}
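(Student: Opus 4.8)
The plan is to realise $\Loc$ as a full subcategory of the ambient abelian category $\mathcal{T}$ of diagrams $M^{l}\to M\gets M^{r}$ of $\C[[\pi]]$-modules, in which kernels, cokernels, images and extensions are all formed componentwise, and to deduce everything by transport from $\mathcal{T}$. Additivity is then immediate: $\Loc$ contains the zero diagram and is closed under finite direct sums, since a componentwise direct sum of finitely generated (resp.\ finite rank free, resp.\ injective) modules and maps is again of that type and $(M\oplus M')/(M^{l}\oplus M'^{l})=M/M^{l}\oplus M'/M'^{l}$ is still finite dimensional torsion. For the failure of abelianness I would exhibit a morphism that is both monic and epic in $\Loc$ but not invertible: let $U=(\C[[\pi]]\xrightarrow{\mathrm{id}}\C[[\pi]]\xleftarrow{\mathrm{id}}\C[[\pi]])$ and let $\phi\colon U\to U$ be multiplication by $\pi$ in every slot. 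It is obviously monic in $\mathcal{T}$, hence in $\Loc$, and not invertible; and it is epic in $\Loc$, because if $h\colon U\to X$ is a morphism in $\Loc$ with $h\circ\phi=0$, the commutativity constraint forces the middle component of $h$ to factor through the inclusion of the free module $X^{l}$, while $h\circ\phi=0$ forces the relevant element of $X^{l}$ to be $\pi$-torsion, hence zero. (Alternatively one argues, as the text suggests, that the $\mathcal{T}$-cokernel of this $\phi$ has torsion in its outer slots and so is not an object of $\Loc$.)

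For the exact structure I would invoke the standard fact (see, e.g., B\"uhler's survey on exact categories) that a full additive subcategory of an abelian category which is closed under extensions is itself exact, the admissible monomorphisms (resp.\ epimorphisms) being exactly those morphisms that are monic (resp.\ epic) in the ambient category and whose ambient cokernel (resp.\ kernel) again lies in the subcategory. So the whole content is that $\Loc$ is extension-closed in $\mathcal{T}$: given a short exact sequence $0\to A\to B\to C\to 0$ in $\mathcal{T}$ with $A,C\in\Loc$, the outer rows $0\to A^{l}\to B^{l}\to C^{l}\to 0$ and $0\to A^{r}\to B^{r}\to C^{r}\to 0$ split because $C^{l},C^{r}$ are free, hence projective, so $B^{l},B^{r}$ are free of finite rank; $B$ is finitely generated as an extension of finitely generated modules; the structure maps $B^{l}\to B$, $B^{r}\to B$ are injective by a short diagram chase from the injectivity of the structure maps of $A$ and $C$ and of $A\to B$; and the snake lemma applied to the outer row mapping to the middle row yields $0\to A/A^{l}\to B/B^{l}\to C/C^{l}\to 0$, so $B/B^{l}$ and $B/B^{r}$ are extensions of finite dimensional torsion modules, hence finite dimensional torsion. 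Thus $B\in\Loc$.

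For the factorization criterion I would take the canonical image factorization $M\twoheadrightarrow\im\phi\hookrightarrow N$ of a morphism $\phi\colon M\to N$ of $\Loc$ computed in $\mathcal{T}$, so that $\im\phi=(\im\phi^{l}\to\im\phi\gets\im\phi^{r})$ with structure maps induced from $N$. One checks first that $\im\phi$ always lies in $\Loc$ ($\im\phi^{l},\im\phi^{r}$ are submodules of free modules over the discrete valuation ring $\C[[\pi]]$, hence free of finite rank, the structure maps are injective, and $\im\phi/\im\phi^{l}$ is a quotient of $M/M^{l}$, hence finite dimensional torsion) and that $M\twoheadrightarrow\im\phi$ is always an admissible epimorphism, its kernel being $\ker\phi=(\ker\phi^{l}\to\ker\phi\gets\ker\phi^{r})\in\Loc$ for the same reasons. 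Consequently $\phi$ decomposes as an admissible epimorphism followed by an admissible monomorphism if and only if $\im\phi\hookrightarrow N$ is an admissible monomorphism, i.e.\ if and only if the $\mathcal{T}$-cokernel $(N^{l}/\im\phi^{l}\to N/\im\phi\gets N^{r}/\im\phi^{r})$ lies in $\Loc$; and, since finite generation of $N/\im\phi$ and the vanishing of torsion in $N/(N^{l}+\im\phi)$ and $N/(N^{r}+\im\phi)$ modulo finite dimensional pieces are automatic (these are quotients of $N$, resp.\ of $N/N^{l}$, $N/N^{r}$), this happens exactly when $N^{l}/\im\phi^{l}$ and $N^{r}/\im\phi^{r}$ are torsion free --- equivalently $\coker\phi^{l}$ and $\coker\phi^{r}$ are torsion free, i.e.\ the special-fiber rank of $\phi^{l}$, resp.\ $\phi^{r}$, equals its generic rank --- and the induced maps $N^{l}/\im\phi^{l}\to N/\im\phi$, $N^{r}/\im\phi^{r}\to N/\im\phi$ are injective. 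This pair of conditions is what should be packaged as ``$\phi^{l}$ and $\phi^{r}$ have constant rank''. The converse is read off directly: if $\phi=i\circ p$ with $p$ admissible epic and $i$ admissible monic, then $p$ is componentwise surjective, so the outer slots of the $\mathcal{T}$-cokernel of $i$ are precisely $N^{l}/\im\phi^{l}$, $N^{r}/\im\phi^{r}$ and its structure maps are the maps above; as $\coker i\in\Loc$ both conditions hold. I expect the only genuine subtlety to be the bookkeeping in this last step: separating which of the four defining properties of $\Loc$ are automatic for $\coker(\im\phi\hookrightarrow N)$ from the two that actually encode ``constant rank'', and noticing in particular that the criterion is not merely that $\im\phi^{l},\im\phi^{r}$ be saturated in $N^{l},N^{r}$ but also that they be the full intersections $N^{l}\cap\im\phi$, $N^{r}\cap\im\phi$. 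The closing claim of the observation, that restrictions of morphisms of $\Hol(\ddA)$ always have this property, is deferred; once the restriction functor is available it will follow from the compatibility of restriction with the finitely generated $\C[z]$-submodules of Definition \ref{Def:NotReallyLattices}.
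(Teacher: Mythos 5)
The paper states this only as an Observation, without proof, so your write-up is supplying details that the text takes for granted; on the whole the argument is sound. The non-abelianness example (multiplication by $\pi$ on the diagram $\C[[\pi]]\to\C[[\pi]]\gets\C[[\pi]]$ with identity structure maps is monic and, via the commutativity constraint and torsion-freeness of $X^l$, also epic, yet not invertible), the extension-closure check (the outer rows split because $C^l,C^r$ are projective; the torsion-quotient condition follows from the snake lemma), and the reduction of the factorization criterion to membership of the ambient cokernel of $\im\phi\hookrightarrow N$ in $\Loc$ are all correct, and the description of admissible monos/epis in an extension-closed subcategory of an abelian category is the standard one. The one loose end is the point you yourself flag as the ``only genuine subtlety'': you package ``constant rank'' as two conditions, namely that $N^{l}/\im\phi^{l}$ and $N^{r}/\im\phi^{r}$ be torsion-free \emph{and} that the induced maps $N^{lr}/\im\phi^{lr}\to N/\im\phi$ be injective. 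In fact the second condition is a formal consequence of the first, so no packaging is needed: if $x\in N^l$ has $i^l_N(x)=\phi(m)$ for some $m\in M$, then, since $M/M^l$ is torsion, $\pi^k m=i^l_M(m')$ for some $k$ and some $m'\in M^l$, whence (by injectivity of $i^l_N$) $\pi^k x=\phi^l(m')\in\im\phi^l$; thus $x$ is torsion modulo $\im\phi^l$ and vanishes there once $N^l/\im\phi^l$ is torsion-free. So the single condition that $\coker\phi^l$ and $\coker\phi^r$ be torsion-free --- constant rank of $\phi^l,\phi^r$ in the usual sense --- is by itself necessary and sufficient, exactly as the observation claims, and your criterion collapses to it.
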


\begin{definition}
We define the following full subcategories of $\Mod^\Z(\pz)$:
\begin{itemize}
\item $\Loc$ is the category of $(M^l,M,M^r)$ such that $M$ is finitely generated.
\item $\Locc$ is the category of $(M^l,M,M^r)$ such that $M$ is a finite dimensional $\C((\pi))$-vector space.
\end{itemize}
\end{definition}


We have all four relevant categories of difference modules in all four relevant spaces. We will now define the restriction functors.

\begin{definition}
We define the functor of \textbf{restriction to the punctured disk} $|_\pzz:\Loc\to \Locc$ as follows:
\[
M= \left(M^l,M,M^r\right) \longmapsto M|_\pzz = \left(\overline{M^l},\C((\pi))\otimes_{\C[[\pi]]}M,\overline{M^r}\right)
.\]
Where $\overline{M^{lr}}$ is defined as the image of the composition $M^{lr}\overset{i^{lr}}\to M\to \C((\pi))\otimes M$ (which is isomorphic to $M^{lr}$).
\end{definition}

We are missing the restrictions from the (punctured) line to the (punctured) formal disk. We will use the following notation to talk about this restriction.


\begin{definition}\label{def:restrictionToDisk}
We define the functor of \textbf{restriction to the disk} $|_\pz:\Hol(\ddA)\to \Loc$ as follows: For $M\in \Hol(\ddA)$, we let $L_M\in \Gl(M)$ (Definition~\ref{Def:NotReallyLattices}), $n\gg 0$ and
\[M|_\pz = ( M|_\pz^l, M_p, M|_\pz^r) :=\left( (\tau^n L_M)_p , M_p , (\tau^{-n}L_M)_p\right) = \left( \tau^n (L_M)_{p-n} , M_p , \tau^{-n} (L_M)_{p+n}\right)
.\]
$M|_\pz$ is seen as a $\C[[\pi]]$-module by identifying $\pi = z-p$. The restriction $|_\pzz:\Hol(\ddAp)\to \Locc$ is defined in exactly the same way.
\end{definition}


\begin{proposition}\label{prop:rhoWellDefdExact}
The functor $|_\pz$ has the following properties:
\begin{enumerate}
\item Its definition has no ambiguity, i.e. $|_\pz$ does not depend on $L_M\in \Gl(M)$ or a big enough $n$ (depending on $L_M$).
\item For $M\in \Hol(\ddA)$, $M|_\pz\in \Loc$, i.e. it has the following properties:
\begin{enumerate}
\item $M_p$ is a finitely generated $\C[[\pi]]$-module.
\item $M|_\pz^l$ and $M|_\pz^r$ are finite rank free $\C[[\pi]]$-modules.
\item The maps $i^{lr}:M|_\pz^{lr}\to M_p$ are injections.
\item $M_p/M|_\pz^l$ and $M_p/M|_\pz^r$ are torsion modules.
\end{enumerate}
\item The functor $|_\pz$ maps morphisms in $\Hol(\ddA)$ to morphisms in $\Loc$.
\item It is an exact functor, in the sense of exact categories: it maps short exact sequences to short exact sequences.
\item Let $f:M\to N$ be a morphism in $\Hol(\ddA)$. Then $f|_\pz^{lr} :M|_\pz^{lr}\to N|_\pz^{lr}$ is a morphism of free $\C[[\pi]]$-modules that has constant rank, i.e. its cokernel is torsion-free.
\end{enumerate}
The functor $|_\pzz$ has the exact same properties, except for 2a): if $M\in \Hol(\ddAp)$, $M_p$ is a finite dimensional $\C((\pi))$-vector space.
\end{proposition}

%
%

\begin{proof}
\leavevmode
\begin{enumerate}
\item Consider the zeroes and poles of $L_M$: $Z_{L_M}$ and $P_{L_M}$ respectively. 
Note that if $p+m\notin P_{L_M}$, the isomorphism $\tau:M_{p+m}\to M_{p+m+1}$ maps $(L_M)_{p+m}$ inside of $(L_M)_{p+m+1}$, and if $p+m\notin Z_{L_M}$, the analogous statement is true for $\tau^{-1}:M_{p+m+1}\to M_{p+m}$. Therefore, if $n$ is big enough (for example, if $Z_{L_M}\cup P_{L_M}\subseteq [p-n,p+n-1]$), $\tau$ identifies all the modules $(L_M)_{p+n+m}\subseteq M_{p+n+m}$ for $m\ge 0$, and similarly it identifies all the modules $(L_M)_{p+n-m}\subseteq M_{p-n-m}$.

Suppose now a different $L'_M\in \Gl(M)$ is chosen. Without loss of generality, taking the intersections allows us to assume that $L'_M\subseteq L_M$. The module $L_M/L'_M$ is finite dimensional. Therefore, for big enough $n$, $(L'_M)_{p\pm n}= (L_M)_{p\pm n}$, as submodules of $M_{p\pm n}$. This shows that $M_\pz$ doesn't depend on the choices of $L\in \Gl(M)$ or a big enough $n$, as desired.

\item 

\begin{enumerate}
\item This follows from Proposition \ref{prop:finiteStalks}.
\item We must prove that for some $L_M\in \Gl(M)$, $(L_M)_{p\pm m}$ is a finite rank free $\C[[\pi]]$-module, where $m\gg 0$. This follows from the fact that $L_M$ is a finitely generated $\C[z]$-module, and therefore its torsion is a finite length module. This implies that if $n\gg 0$, the support of the torsion of $L_M$ will be contained in $[p-n,p+n]$, $(L_M)_{p\pm m}$ will be torsion free if $m\ge n$.

\item Since $\C[[\pi]]$ is a flat $\C[z]$-module, the maps $M|_\pz^l\to M_p\gets M|_\pz^r$ are automatically injections.
\item This follows from the fact that $M/L_M$ is itself a torsion module.
\end{enumerate}

\item Consider a $\ddA$-homomorphism $f:M\to M'$, and let us show that the corresponding map $f|_\pz $ is a morphism in $\Loc$. Pick some $L_M\in \Gl( M)$, and $L_{M'}\in \Gl(M')$ containing $f(L_M)$. Then, for big enough $n$, $\tau^{\pm n}(L_M)_{p\mp n}=M|_\pz^{lr}$ and $\tau^{\pm n}(L_{M'})_{p\mp n}=M'|_\pz^{lr}$, so there are indeed maps $ f|_\pz^l:\tau^{n}(L_M)_{p- n}\to \tau^{n}(L_{M'})_{p-n}$ and $f|_\pz^r:\tau^{-n}(L_M)_{p+ n}\to\tau^{-n} (L_{M'})_{p+n}$ that are restrictions of $f_p:M|_\pz\to M'|_\pz$. These maps are uniquely determined, since the maps $i'^{lr}:M'|_\pz^{lr}\to M'|_\pz$ are injective by the previous part.

\item Consider a short exact sequence $0\to A\overset{f}\to B\overset{g}\to C\to 0$ in $\Hol(\ddA)$, and let $L_B\in \Gl(B)$. Let $L_{A}=L_B\cap A$. It's a submodule of $L_B$, and therefore it's a finitely generated $\C[z]$-module, and $A/L_{A}$ embeds into $B/L_B$, so it is a torsion module. Further, choose $L_C\in \Gl(C)$ such that it contains $g(L_B)$. Then, $L_{A}\in \Gl(A)$, and now we observe that $A|_\pz^{lr}=\tau^{\pm n}(L_A)_{p\mp n}=\ker(
\tau^{\pm n}(L_B)_{p\mp n} \longrightarrow \tau^{\pm n}(L_{C})_{p\pm n}
)= \ker(
B|_\pz^{lr} \longrightarrow C|_\pz^{lr}
)$. It follows that $ A|_\pz=\ker(g|_\pz)$.

For the exactness on the right, we observe that $g(L_B)\in \Gl(C)$. Since $g(L_B)\cong L_B/f(L_A)$, we have that $\coker(f|_\pz^{lr} )=C|_\pz^{lr}$.

\item This follows by decomposing every morphism in $\ddA$ into a surjection followed by an injection, and then applying the exactness of $|_\pz$.

\end{enumerate}

The proof for $|_\pzz$ is analogous.

\end{proof}

The following statement is straightforward.

\begin{proposition}\label{prop:squareCommutes}
The diagram (\ref{eq:Fibersquare}) is commutative, in the sense that there's a natural isomorphism $|_\pzz \circ |_\pz\cong |_\pzz \circ |_\As$.
\end{proposition}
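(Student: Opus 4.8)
\textbf{Proof of Proposition \ref{prop:squareCommutes} (plan).} The plan is to produce the natural isomorphism at the level of underlying $\C((\pi))$-vector spaces from the standard base-change isomorphism, and then to check it respects the two decorating submodules by computing both sides with a single, well-chosen lattice. For $M\in\Hol(\ddA)$, the underlying $\C((\pi))$-vector space of $(M|_\As)|_\pzz$ is $(M|_\As)_p=\C[[\pi]]\otimes_{\C[z]}M|_\As$, while that of $(M|_\pz)|_\pzz$ is $\C((\pi))\otimes_{\C[[\pi]]}M_p$. Using $M|_\As=\C[z,\tfrac1{z-p}]\otimes_{\C[z]}M$ and associativity of tensor products, there is a canonical isomorphism
\[
\alpha_M:(M|_\As)_p\;=\;\C((\pi))\otimes_{\C[z,\frac1{z-p}]}M|_\As\;\cong\;\C((\pi))\otimes_{\C[z]}M\;\cong\;\C((\pi))\otimes_{\C[[\pi]]}M_p,
\]
which is manifestly natural in $M$, and under which the localization map $M_p\to(M|_\As)_p$ becomes the localization map $M_p\to\C((\pi))\otimes_{\C[[\pi]]}M_p$.

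Next I would show $\alpha_M$ carries $M|_\pz^{lr}$ (as a subspace of $\C((\pi))\otimes M_p$ via $\overline{(\,\cdot\,)}$) onto $(M|_\As)|_\pzz^{lr}$. Fix $L_M\in\Gl(M)$ and let $\bar L_M\subseteq M|_\As$ be its image; since $L_M$ is finitely generated over $\C[z]$ and $M|_\As/\bar L_M$ is a localization of the torsion module $M/L_M$, we have $\bar L_M\in\Gl(M|_\As)$, and its zeroes and poles are contained in those of $L_M$, so one $n\gg 0$ works simultaneously in Definition \ref{def:restrictionToDisk} for $M$ and in the definition of $|_\pzz$ for $M|_\As$. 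By Propositions \ref{prop:rhoWellDefdExact}(1) and \ref{prop:rhoBarWellDefdExact}(1) it is legitimate to compute $M|_\pz$ with $L_M$ and $(M|_\As)|_\pzz$ with $\bar L_M$. Now $(M|_\As)|_\pzz^l=(\tau^n\bar L_M)_p=\C[[\pi]]\otimes_{\C[z]}\tau^n\bar L_M$; by flatness of $\C[[\pi]]$ over $\C[z]$ this injects into $(M|_\As)_p$, and by right-exactness of tensoring it equals the image of $(\tau^nL_M)_p=\C[[\pi]]\otimes_{\C[z]}\tau^nL_M$ under $(\tau^nL_M)_p\to M_p\to(M|_\As)_p$. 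Transporting along $\alpha_M$, this is exactly the image of the $\C[[\pi]]$-module $(\tau^nL_M)_p$ under the localization $M_p\to\C((\pi))\otimes M_p$, i.e. $\overline{(\tau^nL_M)_p}=M|_\pz^l$ viewed inside $(M|_\pz)|_\pzz$. The same argument with $\tau^{-n}$ handles the right submodules. Hence $\alpha_M$ is an isomorphism in $\Locc$ from $(|_\pzz\circ|_\As)(M)$ to $(|_\pzz\circ|_\pz)(M)$.

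Finally, naturality: a morphism in $\Locc$ is determined by its underlying $\C((\pi))$-linear map (the inclusions $i^{lr}$ being injective), and on underlying maps both composite functors send $f:M\to M'$ to $f_p$ up to the identifications above. Since $\alpha$ is a natural transformation between the underlying-vector-space functors, the compatibility squares for $\alpha_M$ and $\alpha_{M'}$ commute on underlying maps, hence in $\Locc$. This gives the desired natural isomorphism $|_\pzz\circ|_\pz\cong|_\pzz\circ|_\As$.

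The argument is essentially bookkeeping; the only point requiring care is the licence to compute the two restrictions with the matched lattices $L_M$ and $\bar L_M$ and a common $n$, which is exactly what the already-proven well-definedness statements provide, together with the flatness of $\C[[\pi]]$ over $\C[z]$ used to identify $(\tau^{\pm n}\bar L_M)_p$ with the image of $(\tau^{\pm n}L_M)_p$.
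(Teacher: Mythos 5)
Your proof is correct. The paper offers no proof here — it simply declares the statement straightforward — so there is no written argument to compare against; your write-up supplies precisely the omitted details: the base-change isomorphism on underlying $\C((\pi))$-vector spaces, the check that the decorating submodules $\overline{(\tau^{\pm n}L_M)_p}$ and $(\tau^{\pm n}\bar L_M)_p$ coincide under it, and naturality. The crucial organizing observation — that one may compute both sides with the matched lattices $L_M$ and $\bar L_M$ and a common $n$, which the already-established well-definedness statements license — is exactly what the author presumably had in mind.

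One small imprecision worth flagging: you justify $\bar L_M\in\Gl(M|_\As)$ by asserting that $M|_\As/\bar L_M$ "is a localization of the torsion module $M/L_M$." That is not literally true, since $\bar L_M$ (the $\C[z]$-module image of $L_M$) is strictly smaller than $\C\bigl[z,\tfrac1{z-p}\bigr]\otimes_{\C[z]}L_M$; the quotient $M|_\As/\bar L_M$ is rather an extension of the genuine localization $\C\bigl[z,\tfrac1{z-p}\bigr]\otimes_{\C[z]}(M/L_M)$ by the torsion module $\bigl(\C\bigl[z,\tfrac1{z-p}\bigr]\otimes L_M\bigr)/\bar L_M$. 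The conclusion you need is still immediate — any $x=\tfrac{m}{(z-p)^k}\in M|_\As$ is killed modulo $\bar L_M$ by $(z-p)^kP(z)$ once $P(z)m\in L_M$ — so $M|_\As/\bar L_M$ is indeed torsion and $\bar L_M\in\Gl(M|_\As)$. Everything else is sound.
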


We can now prove Theorem~\ref{thm:formalCycles}. The proof can be found in Section~\ref{subsec:RestrProof}. The reader can now skip to it, since it does not use any statements other than the ones proved up to this point.

\begin{proposition}\label{prop:stalksInsteadofFibers}
Theorem \ref{thm:formalCycles} works the same when ``formal fibers'' are replaced by ``stalks''. Let $V_p=\Spec\C[z]_{(z-p)}$, and let $\Hol(V_p)$, $\Hol(V_p^*)$, $|_{V_p}$ and $|_{V_p^*}$ be defined as $\Loc$, $\Locc$, $|_\pz$ and $|_\pzz$, replacing $\C[[\pi]]$ everywhere by $\C[z]_{(z-p)}$ and $\C((\pi))$ by $\C(z)$. The following diagram is a fiber product of categories:
\[
\begin{tikzcd}[column sep = 5 em,ampersand replacement=\&]
\Hol(\ddA) \arrow[r,"|_{\As}"]\arrow[d,"|_{V_p}"] \& \Hol(\ddAp)\arrow[d,"|_{V_p^*}"] \\
\Hol(V_p) \arrow[r,"|_{V_p^*}"] \& \Hol(V_p^*).
\end{tikzcd}
\]
\end{proposition}
\begin{proof}
Consider the diagram
\[
\begin{tikzcd}[column sep = 5 em,ampersand replacement=\&]
\Hol(\ddA) \arrow[d,"|_{\As}"]\arrow[r,"|_{V_p}"] \& \Hol(V_p)\arrow[d,"|_{V_p^*}"]\arrow[r,"\C{\left[\left[\pi\right]\right]}\otimes_{\C\left[z\right]}"] \& \Loc \arrow[d,"|_{\pzz}"]\\
\Hol(\ddAp) \arrow[r,"|_{V_p^*}"] \& \Hol(V_p^*)\arrow[r,"\C{\left[\left[\pi\right]\right]}\otimes_{\C\left[z\right]}"] \& \Locc.\\
\end{tikzcd}
\]
\vspace{-2.5em}

The horizontal arrows on the right are given by $(M^l,M,M^r)\mapsto (\C[[\pi]]\otimes M^l,\C[[\pi]]\otimes M,\C[[\pi]]\otimes M^r)$. We claim that it is a fiber square, from which it follows that Theorem \ref{thm:formalCycles} implies the statement by the following abstract fact: if the outer rectangle is cartesian and the right square is cartesian as well, then the left square is cartesian.

First of all, we check that $\Mod(\C[z]_{(z)}) \cong \Mod(\C[[z]])\times_{\Mod(\C((z)))} \Mod(\C(z))$. The functor $F$ from left to right is given by tensoring. The inverse functor $G$ is given by mapping a triple $(M_{\C[[z]]},M_{\C(z)},\phi:\C((z))\otimes M_{\C[[z]]} \cong \C((z))\otimes M_{\C(z)})$ to the kernel of the map $M_{\C[[z]]}\oplus M_{\C(z)}\to \C((z))\otimes M_{\C[[z]]} \cong \C((z))\otimes M_{\C(z)}$. The fact that $GF\cong \Id$ comes from tensoring with the short exact sequence $0\to\C[z]_{(z)}\to \C[[z]]\oplus \C(z) \to \C((z))\to 0$. To see that $FG\cong \Id$, notice that localizing the short exact sequence $S = (0\to GM \to M_{\C[[z]]}\oplus M_{\C(z)} \to \C(z)\otimes M_{\C[[z]]}\to 0)$ yields $\C(z)\otimes GM \cong M_{\C(z)}$, and we can apply the five lemma to the natural map of short exact sequences from $0\to GM\to \C[[z]]\otimes GM\oplus \C(z)\otimes GM\to \C((z))\otimes GM\to 0$ to $S$. This yields the isomorphism $FG \cong \Id$. Note that the naturality of the isomorphisms implies that $F$ and $G$ are inverse when acting on morphisms as well.

%
%
%
%

Given that $\Mod(\C[z]_{(z-p)}) \cong \Mod(\C[[\pi]])\times_{\Mod(\C((\pi)))} \Mod(\C(z))$, we can now prove that $\Hol(V_p)\cong \Hol(U_p)\times_{\Hol(U_p^*)} \Hol(V_p^*)$. Start by noting that the functor $(M^l,M,M^r)\mapsto M$ induces an equivalence $\Hol(U_p)\times_{\Hol(U_p^*)} \Hol(V_p^*)\cong \Hol(U_p)\times_{\Mod(\C((\pi)))}\Mod(\C(z))$: the inverse of this functor is given by $(M_1,M_2)\mapsto (M_1,M_2)$, with $M_2^{lr}=G(M_1^{lr},M_2)$. Now notice that $ \Hol(U_p)\times_{\Mod(\C((\pi)))}\Mod(\C(z))$ is the category of diagrams $a^l\to a\gets a^r$ with objects in $\Mod(\C[[\pi]])\times_{\Mod(\C((\pi)))} \Mod(\C(z))$ such that after applying $G$ they land in $\Hol(\pz)$: this is induced by the functor
\[
\left( ((M_1^l\to M_1\gets M_1^r),M_2) \right)\longleftrightarrow \left((M_1^l,M_2) \to(M_1,M_2) \gets (M_1^r,M_2)  \right).
\]
Now the claim follows from the fact that $F$ and $G$ are mutual inverses.

\end{proof}

\subsection{Extending a difference module over a puncture}\label{sec:extendOverPuncture}

One application of Theorem \ref{thm:formalCycles} is to compute all the possible ways that a difference module on the punctured line can be extended to a module on the whole affine line.

\begin{remark}\label{rem:GMextension}
Let $M\in \Hol(\ddAp)$. $j_{!*}M$ is the module given by the following information: $(j_{!*}M)|_\As=M$ and
\[
(j_{!*}M)|_\pz = \left(
M|_\pzz^l,M|_\pzz^l+M|_\pzz^r,M|_\pzz^r 
\right) \subset M|_\pzz = \left(
M|_\pzz^l,M_p,M|_\pzz^r 
\right).
\]
If a module $N$ is supported on $p+\Z$, then $N|_\pz^{lr}=0$ and $N_p$ is torsion. We can see directly that our candidate for $j_{!*}M$ has no such (nonzero) submodules or quotients, because $(j_{!*}M)|_\pz$ doesn't have them. Proposition~\ref{Prop:GMExtensionProps}, ensures that this is indeed the intermediate extension.
\end{remark}

\begin{remark}~\label{rem:apparent}
Given a $N\in \Hol(\ddA)$ coming from a difference equation via Construction~\ref{con:diffEq}, we can think of the difference $\frac{N}{j_{!*}(N|_\As)}$ as \emph{apparent} singularities, because they disappear after the change of coordinates that transforms $N$ into $j_{!*}(N|_\As)$. The remark above shows that the local type can detect these, since they correspond to the module $\frac{N_p}{N|_\pz^l+N|_\pz^r}$.
\end{remark}

Now let $N$ be any other module in $\Hol(\ddA)$ such that $N|_\As=M$. The adjunction between $|_{\ddAp}$ and the forgetful functor yields a natural map $N\to M$, whose image we will denote $\ov N$, and Proposition~\ref{Prop:GMExtensionProps} ensures that $j_{!*}M\subseteq \ov N \subseteq M$. Therefore, we have a diagram
\[
N \twoheadrightarrow \ov N \hookleftarrow j_{!*}M.
\]
The kernel of the first arrow and the cokernel of the second are torsion modules supported on $p+\Z$, so they are successive extensions of $\delta=\ddA/ \ddA (z-p)$. Therefore, to understand the collection of possible $N$'s it is sufficient to understand extensions of modules by torsion modules. The following proposition computes all extensions in ${\Mod^\Z(\pz)}$. Notice that even though ${\Mod^\Z(\pz)}$ is just an exact category, since it is closed under extensions in the category of diagrams of $\C[[\pi]]$-modules, $\Ext$ groups can be computed in this larger abelian category, and the same is true for $\Loc$ and $\Locc$.

\begin{proposition}\label{prop:computingExtensions}
Let $M=(M^l,M,M^r)$ and $N=(N^l,N,N^r)$ be two modules in ${\Mod^\Z(\pz)}$. Let us denote $\Phi^{lr}N = \frac{N^{lr}}{N}$. There is an exact sequence:
\[
\begin{array}{rrrl}
 0   \longrightarrow &
 \Hom_{\Mod^\Z(\pz)}(M,N)   \overset{\Theta}\longrightarrow &
 \Hom_{\C[[\pi]]}(M,N)
\longrightarrow &
\\
\longrightarrow  \Hom_{\C[[\pi]]}(M^l,\Phi^lN)\oplus \Hom_{\C[[\pi]]}(M^r,\Phi^r N) \longrightarrow &
 \Ext^1_{\Mod^\Z(\pz)}(M,N) \overset{\Theta}\longrightarrow &
  \Ext^1_{\C[[\pi]]}(M,N)
\longrightarrow  &
 0.
\end{array}
\]
Where $\Theta$ is the forgetful functor from ${\Mod^\Z(\pz)}$ to $\Mod(\C[[\pi]])$, and $\Hom_{\C[[\pi]]}(M,N)$ is mapped into the group $\Hom_{\C[[\pi]]}(M^l,\Phi^l N)\oplus \Hom_{\C[[\pi]]}(M^r,\Phi^r N)$ by restricting to $M^{lr}$ and composing with the projection to $N/N^{lr}$.
\end{proposition}

\begin{corollary}\label{cor:extensions}
\begin{enumerate}
\item If $M$ is torsion, then
\[
\Ext^1_{\Mod^\Z(\pz)}(M,N) \cong \Ext^1_{\C[[\pi]]}(M,N).\]
\item  If $M$ is torsion-free, then
\[
\Ext^1_{\Mod^\Z(\pz)}(M,N) \cong \frac{\Hom_{\C[[\pi]]}(M^l,\Phi^l N)\oplus \Hom_{\C[[\pi]]}(M^r,\Phi^r N)}{\Hom_{\C[[\pi]]}(M,N) / \Hom_{{\Mod^\Z(\pz)}}(M,N)}.
\]
\end{enumerate}
\end{corollary}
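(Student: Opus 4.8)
The plan is to deduce both statements directly from the exact sequence of Proposition \ref{prop:computingExtensions}, whose left-hand term is the cokernel of the restriction map $\rho:\Hom_{\C[[\pi]]}(M,N)\to\Hom_{\C[[\pi]]}(M^l,N)\oplus\Hom_{\C[[\pi]]}(M^r,N)$, $\phi\mapsto(\phi\circ i^l,\phi\circ i^r)$, and whose right-hand term is $\Ext^1_{\C[[\pi]]}(M,N)$, with $\Theta$ the map induced by the forgetful functor $\Loc\to\Mod(\C[[\pi]])$. Each of the two cases then reduces to checking that one of the two outer terms of this sequence vanishes.

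For part (1), suppose $M$ is torsion, i.e.\ finite-dimensional over $\C$. By the definition of $\Loc$, the maps $i^l:M^l\to M$ and $i^r:M^r\to M$ are injections and $M^l$, $M^r$ are free $\C[[\pi]]$-modules; since $\C[[\pi]]$ has infinite $\C$-dimension, any nonzero free $\C[[\pi]]$-module is infinite-dimensional and so cannot inject into $M$. Hence $M^l=M^r=0$, so $\Hom_{\C[[\pi]]}(M^l,N)\oplus\Hom_{\C[[\pi]]}(M^r,N)=0$, the cokernel of $\rho$ is zero, and the exact sequence exhibits $\Theta$ as an isomorphism $\Ext^1_\Loc(M,N)\cong\Ext^1_{\C[[\pi]]}(M,N)$.

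For part (2), suppose $M$ is torsion-free. Again by the definition of $\Loc$, $M$ is a finitely generated $\C[[\pi]]$-module; since $\C[[\pi]]$ is a discrete valuation ring, a finitely generated torsion-free module over it is free, hence projective, so $\Ext^1_{\C[[\pi]]}(M,N)=0$ for every $N$. Thus the right-hand term of the exact sequence vanishes, and $\Ext^1_\Loc(M,N)$ is identified with $\ker\Theta=\coker\rho=\dfrac{\Hom_{\C[[\pi]]}(M^l,N)\oplus\Hom_{\C[[\pi]]}(M^r,N)}{\Hom_{\C[[\pi]]}(M,N)}$, which is the asserted formula.

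There is essentially no obstacle: all the work sits in Proposition \ref{prop:computingExtensions}, and the corollary merely records the two extreme situations in which one outer term of its exact sequence is forced to vanish. The only points worth a sentence of care are the two elementary facts invoked — that a nonzero free $\C[[\pi]]$-module does not embed in a finite-length $\C[[\pi]]$-module, and that a finitely generated torsion-free module over the discrete valuation ring $\C[[\pi]]$ is free — and the observation that neither case requires any hypothesis on $N$.
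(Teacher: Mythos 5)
Your proof is correct and follows the only natural route: the paper states the corollary without proof, and it does indeed drop straight out of the exact sequence of Proposition~\ref{prop:computingExtensions} once one observes, as you do, that for torsion $M$ the free submodules $M^l, M^r$ must vanish (so the left term is zero), and for torsion-free $M$ over the DVR $\C[[\pi]]$ the module $M$ is free, hence projective (so the right term is zero). Both auxiliary facts are elementary and correctly justified.
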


This corollary is enough to compute all the possible extensions of a module $M\in\Hol(\ddAp)$ to some $N\in\Hol(\ddA)$. Going back to the diagram $N \twoheadrightarrow \ov N \hookleftarrow j_{!*}M
$, the first part can be applied to compute the possible $\ov N$'s from $j_{!*}M$, which amounts to taking a finitely generated submodule $\ov N$ such that $(j_{!*}M)_p\subseteq \ov N \subseteq M_p$. The second part of the corollary can then be applied to obtain all possible extensions of $\ov N$ by a torsion module.

\begin{proof}[Proof of Proposition \ref{prop:computingExtensions}]
The functor $\Theta$ produces a homomorphism $\Ext^1_{\Mod^\Z(\pz)}(M,N) \overset{\Theta}\longrightarrow \Ext^1_{\C[[\pi]]}(M,N)$. Let us show that it is surjective. Consider an extension of $\C[[\pi]]$-modules $0\to N\overset{f}\to P\overset{g}\to M\to 0$. We need to find two submodules $P^{lr}\subset P$ such that there are induced short exact sequences $0\to N^{lr}\to P^{lr}\to M^{lr}\to 0$. Since $M^{lr}$ is a projective $\C[[\pi]]$-module, there is a lift $\wt i^{lr}:M^{lr}\to P$ such that $g\circ \wt i^{lr}=i^{lr}:M^{lr}\to M$. Then, one can take $P^{lr} = fN^{lr}+\wt i^{lr} M^{lr}$. Since $fN^{lr}\subset \ker g$ and $g|_{\wt i^{lr} M^{lr}}$ is injective, this sum is actually a direct sum, so we indeed obtain the desired short exact sequences, and thus some $P = (P^l,P,P^r)\in {\Mod^\Z(\pz)}$ fitting into a short exact sequence $0\to N\to P\to M\to 0$.

The kernel of $\Theta$ is the group of extensions that take the form
\[
0\longrightarrow N \longrightarrow (P^l,N\oplus M,P^r)\longrightarrow M\longrightarrow 0.
\]
These extensions are all given by choosing submodules $P^{lr}\subset N\oplus M$, such that they contain $N^{lr}$ and the quotients map isomorphically into $M^{lr}$. The short exact sequence $0\to N^{lr}\to P^{lr} \to M^{lr}\to 0$ splits because $M^{lr}$ is projective. Let $\wt i^{lr}:M^{lr}\to P^{lr}\to N\oplus M$ be one such splitting. The $M$ component of $\wt i^{lr}$ must be the identity, and therefore, it is determined by a map $M^{lr}\to N$. Therefore we have a map $\Hom_{\C[[\pi]]}(M^l,N)\oplus \Hom_{\C[[\pi]]}(M^l,N)\to \Ext^1_{\Mod^\Z(\pz)}(M,N)$ whose image is the kernel of $\Theta$.

Let us show that this map is a $\C[[\pi]]$-module homomorphism: multiplication by $\C[[\pi]]$ is induced on both sides by multiplication on $N$, so it is clear that it commutes with $\C[[\pi]]$. For the sum, we can use the Baer sum. For two maps $(j^l,j^r):M^l\oplus M^r\to N$, the corresponding extension is the class of the following module, with the obvious structure of an extension of $M$ by $N$:
\[
P = M\oplus N; P^{lr} =  (\Id_M,j^{lr}) M^{lr} + N^{lr}.
\]
Suppose we have two pairs of maps, $j^{lr}_i$, for $i=1,2$, giving rise to two extensions $P_i$. Their Baer sum is by definition:
\[
P_3 = \frac{P_1\times_M P_2}{\{(a,0)-(0,a):a\in N \}}\cong \frac{N\oplus N\oplus M}{N} 
.\]
In the last term, $N$ is embedded in $N\oplus N$ diagonally, so $P_3$ is isomorphic to $N\oplus M$, via the map $(n_1,n_2,m)\mapsto (n_1+n_2,m)$. Looking at this map, we see that the image of $M^{lr}$ by $j^{lr}$ in $P_3$ is given by $(j_1^{lr}+j_2^{lr},\Id_M)$, as desired.

We have an exact sequence
\[
\Hom_{\C[[\pi]]}(M^l,N)\oplus \Hom_{\C[[\pi]]}(M^r,N)\longrightarrow \Ext^1_{\Mod^\Z(\pz)}(M,N) \overset{\Theta}\longrightarrow \Ext^1_{\C[[\pi]]}(M,N)
\longrightarrow 0.
\]
Let us compute the kernel of the leftmost map. It is made of the pairs of maps $(j^l,j^r)\in\Hom_{\C[[\pi]]}(M^l,N)\oplus \Hom_{\C[[\pi]]}(M^r,N) $ such that the following short exact sequence is split:
\[
0\longrightarrow N \longrightarrow (N^l\oplus ( j^l,1) M^l,N\oplus M,N^r\oplus ( j^r,1) M^r)\overset{p}\longrightarrow M\longrightarrow 0
.\]
These short exact sequences split exactly when there is a section of the second arrow, i.e. a map $ s:M\to N\oplus M$ such that $p\circ s=1_{M}$. This means that $s$ is of the form $s=(j,1)$. Further, in order to be a morphism, $s$ must map $M^{lr}$ inside of $(N\oplus M)^{lr} = N^{lr}\oplus ( j^{lr},i^{lr}) M^{lr}$. Since $s^{lr} = (j^{lr},1)$ does map $M^{lr}$ into $(N\oplus M)^{lr}$, we have that
\[
s(M^{lr})\subseteq (N\oplus M)^{lr} \Leftrightarrow (s|_{M^{lr}}-s^{lr}) (M^{lr}) \subseteq (N\oplus M)^{lr}.
\]
Now, $(s|_{M^{lr}}-s^{lr}) = (j-j^{lr}, 0)$, so $s$ is a morphism if and only if $j-j^{lr}$ maps $M^{lr}$ into $N^{lr} = (N\oplus M)^{lr}\cap N\oplus 0$. Therefore, the above extension is trivial if and only if there exist three maps as follows:
\[
(a^l,j,a^r)\in \Hom_{\C[[\pi]]}(M^l, N^l) \oplus \Hom_{\C[[\pi]]}(M, N) \oplus \Hom_{\C[[\pi]]}(M^r, N^r).
\]
This triple must have the property that $j^{lr} = j + a^{lr}$. Using the fact that $M^{lr}$ is free, we can simplify the quotient:
\[
\frac{\frac{\Hom_{\C[[\pi]]}(M^l, N)}{\Hom_{\C[[\pi]]}(M^l, N^l)}\oplus \frac{\Hom_{\C[[\pi]]}(M^r, N)}{\Hom_{\C[[\pi]]}(M^r, N^r)}}{\im(\Hom_{\C[[\pi]]}(M, N))} = 
\frac{\Hom_{\C[[\pi]]}(M^l, \Phi^l N)\oplus \Hom_{\C[[\pi]]}(M^r, \Phi^r N)}{\im(\Hom_{\C[[\pi]]}(M, N))}.
\]
Finally, note that the maps in $\Hom_{\C[[\pi]]}(M, N)$ the become $0$ in $\Hom(M^l, \Phi^lN)\oplus \Hom(M^r, \Phi^rN)$ are exactly those in $\Hom_{\Mod^\Z(\pz)}(M,N)$.
\end{proof}

\section{Vanishing cycles and singularities of difference equations}

In this section we define vanishing cycles and we show some properties that suggest it is a good analogue for the functor of vanishing cycles in the case of $D$-modules. As always, we will fix an orbit $p+\Z\subset \A^1$. We start by recalling Definition~\ref{def:vanishingCycles}.

\begin{definition}
We define the \textbf{functor of (left) vanishing cycles} $\Phi^l_{p+\Z}:\Hol(\ddA)\to \Mod(\C[[\pi]])$ by
\[
\Phi^l_{p+\Z}(M) = \frac{M_p}{M|_\pz^l}.
\]
Which can be made into a functor in the obvious way.
\end{definition}

Throughout this section we may abbreviate $\Phi= \Phi^l_{p+\Z}$.

\begin{remark} We can make the following observations.
\begin{enumerate}
\item By Proposition~\ref{prop:rhoWellDefdExact}, $\Phi^l_{p+\Z}M$ is a finite length $\C[[\pi]]$-module.
\item The analogous definition yields a second functor $\Phi^r_{p+\Z}(M) = \frac{M_p}{M|_\pz^r}$. Every statement in this section has an analogous statement for $\Phi^r_{p+\Z}$ after interchanging the roles of $\tau$ and $\tau^{-1}$.
\item $\Phi^l_{p+\Z}$ is exact: it is a composition of two exact functors between exact categories $\Hol(\ddA)\to \Hol(\pz)\to \Mod(\C[[\pi]])$. Since its source and target are abelian, it is indeed an exact functor in the sense of abelian categories.
\end{enumerate}
\end{remark}

One reason why $\Phi^l_{p+\Z}$ is a good replacement for vanishing cycles is that it vanishes exactly for modules with no zeroes (and $\Phi^r_{p+\Z}$ vanishes for modules with no poles). We also show exactly how to compute the local type and vanishing cycles from a matrix difference equation.

\subsection{Relation to singularities of difference equations}\label{sec:whenIsModuleSingular}


We show that many reasonable notions of zeroes and poles are equivalent. In particular, we can describe when a difference module $M\in \Hol(\ddA)$ has a ``zero'' or a ``pole'' in terms of the underlying $\C[z]$-module.


\begin{proposition}\label{prop:WhatisaZero}
Let $M\in \Hol(\ddA)$. The following are equivalent:
\begin{enumerate}
\item $\Phi^l_{p+\Z} M = 0$ for every $p\in \A^1$ (resp. $\Phi^r_{p+\Z} M =0$).
\item For some $L\in \Gl(M)$, there's some $N\in \mathbb R$ such that $M/L$ is supported on $\{a\in \C:\Re(a)\ge N\}$ (resp. $\le N$).
\item For every $L\in \Gl(M)$, there's some $N\in \mathbb R$ such that $M/L$ is supported on $\{a\in \C:\Re(a)\ge N\}$ (resp. $\le N$).
\item Any finite subset of $M$ is contained in some $L\in \Gl(M)$ with no zeroes (resp. no poles), i.e. such that $\tau^{-1}L\subseteq L$ (resp. $L\subseteq \tau L$).
\item $M$ is finitely generated over $\C[z]\langle\tau\rangle$ (resp. $\C[z]\langle\tau^{-1}\rangle$).
\end{enumerate}
\end{proposition}

\begin{proof}\leavevmode
\begin{itemize}
\item[$1\Rightarrow2$] Suppose that $\Phi^l M=0$, and let $L\in \Gl(M)$. The fact that all the vanishing cycles are $0$ implies that for every $p\in \A^1/\Z$ and $n\gg 0$, $M_p = \tau^n(L_{p-n})$. Since $\tau^n$ induces an isomorphism $M_{p-n}\cong M_p$, we deduce that $M_{p-n}=L_{p-n}$ for some big enough $n$ (which can be chosen uniformly for all $p$'s, since only a finite set affect the count by Lemma \ref{lem:glattices}). The conclusion follows.

\item[$2\Rightarrow 3$] Two modules $L_1,L_2\in \Gl(M)$ can only differ at a finite set, since $L_i/(L_1\cap L_2)$ are finite length modules.
\item[$3\Rightarrow 4$] Choose any $L\in \Gl(M)$ containing a given finite set. Note that $\supp \frac{\tau^{-n}L+\tau^{-n+1}L}{\tau^{-n+1}L} = Z_L-n+1$. Therefore, if $n$ is big enough, we take $L'=\tau^{-n}L+\cdots +L\in \Gl(M)$, and we have that
\[Z_{L'}=
\supp\frac{L'+\tau L'}{\tau L'}\subseteq \supp  \frac{\tau^{-n}L+\tau^{-n+1}L}{\tau^{-n+1}L} \cap \supp \frac{M}{L} =(Z_L-n+1)\cap \{z:\Re z\gg 0 \}= \emptyset
.\]
\item[$4\Rightarrow 5$] Let $L\in \Gl(M)$ have no zeroes, chosen to contain a (finite) generating set of $M$ over $\ddA$. Then, $\tau^{-1}L\subseteq L$, which implies that $M=\sum_{n\in \Z} \tau^nL = \sum_{n\ge 0} \tau^nL$, so a finite set generating $L$ over $\C[z]$ also generates $M$ over $\C[z]\langle \tau\rangle$, as desired.
\item[$5\Rightarrow 1$] Suppose that $M$ is finitely generated as a $\C[z]\langle\tau\rangle$-module, and let $S$ be a finite generating set over $\C[z]\langle \tau\rangle$. Let $L\in \Gl(M)$ containing $S$. By assumption, $\sum_{i\ge 0} \tau^iL=M$. In particular, $\tau^{-1}L\subset L +\tau L+\cdots+\tau^mL$ for some $m$, since $\tau^{-1}L=\C[z](\tau^{-1}S)$, so it is finitely generated. Let $L'=\tau+\cdots+\tau^mL$. Then $\tau^{-1}L'\subseteq L'$, so the sequence $\tau^i L'$ is increasing with $i$, and we have that $\sum_{i\ge 0} \tau^iL'=\bigcup_{i\ge 0} \tau^iL'=M$. Fixing a fiber $p$, we have that $\bigcup_{i\ge 0} \tau^i(L_{p-i}') =\bigcup_{i\ge 0} (\tau^iL')_p =M_p$. We have that $M_p/L_p$ is a finite length module, by definition of $\Gl(M)$ and Proposition \ref{prop:finiteStalks}. Therefore there is an $N\gg 0$ such that $\tau^n(L'_{p-n})=M_p$ for any $n\ge N$. By definition of $|_\pz$, this implies that $M_p=M|_\pz^l$, so $\Phi^l_{p+\Z} M=0$.
\end{itemize}
\end{proof}

\begin{remark}
In particular, note that if $M$ comes from a matrix difference equation $y(z+1)=A(z)y(z)$ (Construction~\ref{con:diffEq}), then if $A(z)$ is defined everywhere on $p+\Z$, $\Phi^r_{p+\Z}M=0$. Conversely, if $\Phi^r_{p+\Z}M=0$, then there is a gauge change after which $A$ becomes defined everywhere on $p+\Z$. The same holds for $A^{-1}$ and $\Phi^l_{p+\Z}$.
\end{remark}

These can be put together to characterize difference modules with no singularities at finite points.

\begin{corollary}
Let $M\in \Hol(\ddA)$. The following are equivalent:
\begin{enumerate}
\item $\Phi^l_{p+\Z} M = \Phi^r_{p+\Z} M = 0$ for every $p\in \A^1$.
\item $M$ is finitely generated over $\C[z]$.
\item $M$ is a vector bundle.
\end{enumerate}
\end{corollary}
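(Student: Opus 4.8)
The plan is to deduce this corollary by combining the two halves of Proposition \ref{prop:WhatisaZero} and then arguing that the finitely-generated-over-$\C[z]$ condition forces local freeness. For the equivalence of (1) and (2): if $\Phi^l_{p'+\Z}M = \Phi^r_{p'+\Z}M = 0$ for all $p'$, then Proposition \ref{prop:WhatisaZero} gives, for any fixed $L\in \Gl(M)$, two real numbers $N_1, N_2$ such that $M/L$ is supported both on $\{\Re(a)\ge N_1\}$ and on $\{\Re(a)\le N_2\}$. Hence $\supp(M/L)$ is contained in the vertical strip $\{N_1 \le \Re(a)\le N_2\}$. On the other hand, Lemma \ref{lem:glattices} tells us $M/L$ is supported on finitely many orbits $p'+\Z$; a $\Z$-orbit meets any bounded vertical strip in only finitely many points, so $\supp(M/L)$ is actually a finite set. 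Since $M/L$ is a torsion $\C[z]$-module whose support is finite and $L$ is finitely generated over $\C[z]$, it follows that $M$ is finitely generated over $\C[z]$. The reverse implication (2) $\Rightarrow$ (1) is immediate: if $M$ is finitely generated over $\C[z]$ we may take $L = M\in \Gl(M)$, and then $M/L = 0$ is supported on any half-plane, so conditions (2) in both halves of Proposition \ref{prop:WhatisaZero} hold, giving $\Phi^l_{p'+\Z}M = \Phi^r_{p'+\Z}M = 0$ for every $p'$.

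For the equivalence of (2) and (3): clearly (3) $\Rightarrow$ (2), since a vector bundle over $\A^1$ (i.e. a finite rank free $\C[z]$-module, as $\C[z]$ is a PID) is finitely generated over $\C[z]$. For (2) $\Rightarrow$ (3), suppose $M$ is finitely generated over $\C[z]$. By Corollary \ref{finiteRank} it has finite generic rank, and being finitely generated over the PID $\C[z]$ it decomposes as a free part plus a torsion part $M_{\mathrm{tors}}$. The torsion part is a finite length $\C[z]$-module supported on a finite set of points; but $M$ is a difference module, so $\tau$ permutes the stalks, carrying the torsion at $p'$ isomorphically to the torsion at $p'-1$. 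Since $\tau$ is invertible, $M_{\mathrm{tors}}$ is a $\ddA$-submodule of $M$, and its support is $\tau$-invariant, hence a union of $\Z$-orbits; but it is also finite, so it must be empty. Therefore $M = M/M_{\mathrm{tors}}$ is torsion-free and finitely generated over $\C[z]$, i.e. a finite rank free $\C[z]$-module, which is exactly a vector bundle on $\A^1$.

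I expect the main obstacle to be the argument in the (1) $\Rightarrow$ (2) direction that the vertical strip together with the finite-orbit support forces a finite support: one must be careful that Proposition \ref{prop:WhatisaZero} is applied with a common choice of $L$ (its condition (2) is an existential over $L$, but condition (3) is universal, so picking any single $L$ and invoking (3) from both halves is legitimate), and that Lemma \ref{lem:glattices} indeed applies to that same $L$. The rest is essentially bookkeeping with the structure theorem for modules over a PID and the observation that torsion in a difference module is automatically a sub-difference-module with $\Z$-invariant (hence, when finite, empty) support.
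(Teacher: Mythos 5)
Your route mirrors the paper's: (1) and (2) are linked via Proposition \ref{prop:WhatisaZero} together with Lemma \ref{lem:glattices} to pin down a finite support for $M/L$, and (2) and (3) are linked by observing that the $\C[z]$-torsion of a difference module has $\Z$-invariant support, hence is zero when its support is finite. The paper closes the cycle $(1)\Rightarrow(2)\Rightarrow(3)\Rightarrow(1)$ rather than proving two biconditionals, and it phrases $(2)\Rightarrow(3)$ contrapositively (a single torsion element produces a $\Z$-orbit of torsion), but these are only cosmetic differences from your decomposition-into-free-plus-torsion argument.

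There is, however, a genuine gap in your $(1)\Rightarrow(2)$ step. You write that since $M/L$ is a torsion $\C[z]$-module with finite support and $L$ is finitely generated, it follows that $M$ is finitely generated. But a torsion $\C[z]$-module supported at finitely many points need not be finitely generated: $\bigoplus_{n\ge 1}\C[z]/(z^n)$ is torsion, supported only at the origin, and infinitely generated. You additionally need to know that the stalk of $M$ (hence of $M/L$) at each point is finitely generated over the local ring; that is exactly Proposition \ref{prop:finiteStalks}, which the paper invokes at precisely this juncture to pass from finite support to finite generation. With that citation inserted, your argument is complete and matches the paper's.
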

\begin{proof}
\begin{itemize}
\item[1$\Rightarrow$2] Let $L\in \Gl(M)$. By Proposition \ref{prop:WhatisaZero}, $M/L$ is supported on a set of the form $\{a\in \C:-N\le \Re(a)\le N \}$ for some $N\in \mathbb R$, which combined with Lemma \ref{lem:glattices} implies that $M/L$ has finite support. Finally, Proposition \ref{prop:finiteStalks} implies that $M/L$ is finitely generated, so $M$ is indeed finitely generated over $\C[z]$.


\item[2$\Rightarrow$3] If $M$ is finitely generated over $\C[z]$ and it is not a vector bundle, it must have a torsion element. Let $s\in M\setminus \{0\}$ be such that its support is a single point $a$. Then $\supp \tau^n s = a+n$, which implies that the torsion submodule of $M$ is not finitely generated, and therefore $M$ itself is not finitely generated.
\item[3$\Rightarrow$1] This follows directly from Proposition \ref{prop:WhatisaZero}.

\end{itemize}
\end{proof}

Consider a matrix difference equation $y(z+1)=A(z)y(z)$ and use Construction~\ref{con:diffEq} to construct a difference module $M$ with a trivial bundle $L\subseteq M$. We will now discuss how the local type of $M$ can be computed directly from the matrix. The answer is most convenient when all the zeroes of $A$ are to the left of its poles, which is the opposite situation to the ``austerity'' used to construct the intermediate extension.

\begin{proposition}\label{prop:matrices}
Consider a matrix difference equation $y(z+1)=A(z)y(z)$ with $A\in GL_n(\C(z))$, and consider the corresponding difference module $\C[z]^n=L\subseteq M\subseteq \C(z)^n$. Let $P_A\subset \C$ be the subset of $p+\Z$ over which $A(z)$ is not defined, and let $Z_A$ be the subset of $p+\Z$ over which $A(z)^{-1}$ is not defined. %
Consider the obvious ordering on $p+\Z$. We define the following sequence of matrices $(A^{(n)}(z))_{n\in \Z}$:
\[
A^{(0)}(z)=\Id\in \GL_n(\C(z));\quad A^{(n+1)}(z)= A^{(n)}(z) A(z+n)^{-1};\quad A^{(n-1)}(z) = A^{(n)}(z)A(z+n-1).
\]
It is straightforward to check that $\tau^{-n}L$ is generated over $\C[z]$ by the columns of $A^{(n)}$.
\begin{enumerate}
\item Let $n_1,n_2\in \Z$ be such that $p+n_1\le Z_A\cup P_A<p+n_2$. Then:
\[
M|_\pz^l = (\tau^{-n_1}M)_p;\quad 
M|_\pz^r = (\tau^{-n_2}M)_p.
\]
In terms of matrices, there exists a basis for $M|_\pz^l$ such that the coordinates for a basis of $M|_\pz^r$ are given by the columns of $(A^{(n_1)})^{(-1)}A^{(n_2)}$. Since a basis for $M|_\pz^l$ is a basis for $\C((z-p))\otimes M_p$, this computes $M|_\pzz$.

\item Additionally, Let $n_3\le n_4$ be such that $p+n_3\le P_A$ and $Z_A < p+n_4$. In this case, $M_p$ is generated by $\{(\tau^{-n}L)_p \}_{n_3\le n\le n_4}$. In the standard basis of $L$, $M_p$ is generated by the columns of $\{A^{(n)}\}_{n_3\le n\le n_4}$. In this case, $M|_\pz^l=(\tau^{-n_1}L)_p$ and $M|_\pz^r = (\tau^{-n_2}L)_p$, so in the standard basis $M|_\pz^l$ (resp. $M|_\pz^r$) is the column span of $A^{(n_1)}$ (resp. $A^{(n_2)}$).

\item In particular, suppose that $n_3,n_4$ above can be chosen so that $n_3=n_4$. Then $M_p=(\tau^{-n_3}M)_p$. In terms of matrices, there is a basis of $M_p$ such that $M|_\pz^l$ (resp. $M|_\pz^r$) is the column span of $(A^{(n_3)})^{-1}A^{(n_1)}$ (resp. $(A^{(n_3)})^{-1}A^{(n_2)}$) over $\C[[z-p]]$.
\end{enumerate}

\end{proposition}
\begin{proof}
Let $n\in \Z$. Since the difference module structure is $\tau^{-1}(y(z))=A(z)^{-1}y(z+1)$, $A(p+n)^{-1}$ being well-defined is equivalent to $\tau^{-1}(L_{p+n+1})\subseteq L_{p+n}$, or equivalently, $L_{p+n+1}\subseteq \tau(L_{p+n})=(\tau L)_{p+n+1}$. Analogously, $p+n\notin P_A$ if and only if $(\tau L)_{p+n+1}=\tau(L_{p+n})\subseteq  L_{p+n+1} $. Applying $\tau^{-1- n}$, we obtain the following relations:
\[
p+n\notin Z_A\Leftrightarrow (\tau^{-n-1}L)_p\subseteq (\tau^{-n}L)_p;\quad 
p+n\notin P_A\Leftrightarrow (\tau^{-n}L)_p\subseteq (\tau^{-n-1}L)_p. 
\]
Let $C_n=(\tau^{-n}L)_p\subseteq M_p$, and consider the sequence $(C_n)$. We have just shown that $C_n\subseteq C_{n+1}$ (resp. $C_n\supseteq C_{n+1}$) whenever $p+n\notin P_A$ (resp. $p+n\notin Z_A$). Thus:
\begin{align*}
p+n_1\le Z_A \cup P_A&\Longrightarrow C_n= C_{n_1}\quad \forall n\le n_1; &
Z_A \cup P_A < p+n_2 &\Longrightarrow C_{n_2}= C_{n}\quad \forall n\ge n_2 ;\\
p+n_3\le P_A &\Longrightarrow C_{n}\subseteq  C_{n_3}\quad \forall n\le n_3; & 
Z_A< p+n_4 &\Longrightarrow C_{n}\subseteq  C_{n_4}\quad \forall n\ge n_4.
\end{align*}
Therefore, when $n\gg 0$, $M|_\pz^l=(\tau^{n}L)_p = C_{-n}=C_{n_1}$, and similarly $M|_\pz^r = C_{n_2}$. Also, $M_p$ is generated by $(\tau^{-n}L)_p$ for $n\in \Z$. For $n\le n_3$, $(\tau^{-n}L)_p\subseteq (\tau^{-n_3}L)_p$ and for $n\ge n_4$, $(\tau^{-n}L)_p\subseteq (\tau^{-n_4}L)_p$, so to generate $M_p$ only the above modules for $n_3\le n \le n_4$ are required. In particular, if $n_3=n_4$, $M_p=(\tau^{-n_3}L)_p$.

All the statements about matrices are straightforward given that the columns of $A^{(n)}$ generate $\tau^{-n}L_p$.


\end{proof}

\begin{remark}
Note that the matrices $(A^{(n)})^{-1}A^{(m)}$ have the following simple expression, which gets simpler as $|n-m|$ gets smaller:
\begin{align*}
n>m\Rightarrow (A^{(n)})^{-1}A^{(m)}&=A(z+n-1)A(z+n-2)\cdots  A(z+m);\\ n<m \Rightarrow (A^{(n)})^{-1}A^{(m)}&=A(z+n)^{-1}A(z+n+1)^{-1}\cdots A(z+m-1)^{-1}.
\end{align*}
\end{remark}

\subsection{Relation to the monodromy matrix}\label{sec:relationToBorodin}

In this section, concretely as Proposition~\ref{prop:relationToBorodin}, we describe the relation between the \textit{monodromy matrix} described in \cite{Borodin}, which comes from the results of \cite{Birkhoff}, and the local type.

In this section we will work in the analytic topology and with holomorphic functions. We will let $\M_X$ and $\cH_X$ be the sheaf of meromorphic and holomorphic functions respectively on a complex manifold $X$. We are also using $\M$ for the Mellin transform, but it will not make an appearance here, nor will we work in the analytic topology outside of this section.

We will state all the results in the section followed by the proofs.

\begin{theorem}[{\cite[Theorem III]{Birkhoff}}, {\cite[Theorem 1.3]{Borodin}}]\label{thm:Borodin}
Let $A(z)\in \GL_n(\C(z))$, and consider the difference equation $y(z+1)=A(z)y(z)$. There are matrices $Y^l,Y^r\in \GL_n(\M_\C)$ with the following properties:
\begin{itemize}
\item They solve the difference equation: $Y^{lr}(z+1)=A(z)Y^{lr}(z)$.
\item There is a left (resp. right) half-plane over which $Y^l$ (resp. $Y^r$) is holomorphic and invertible. By a left half-plane we mean a set of the form $\{z\in \C:\Re z<N\}$. 
\end{itemize}
If $Y'^{lr}$ is another matrix with the same properties, then there exists a matrix $B\in \GL_n(\cH_\C)$ such that $Y'^{lr}=Y^{lr}B$ and $B(z+1)=B(z)$.
\end{theorem}

\begin{remark}
The statement in \cite{Birkhoff}, which is corrected in \cite{Borodin}, requires some hypotheses on $A$ in order to better understand the asymptotic growth of the solutions. We will not need this, so in return for proving a weaker property we can find solutions in full generality.
\end{remark}

\begin{definition}
Let $y(z+1)=A(z)y(z)$ be a difference equation as above. Let $Y^l(z)$ and $Y^r(z)$ be two solution matrices given by~\ref{thm:Borodin}. The \textbf{monodromy matrix} of the equation is the matrix $P(z) = (Y^r)^{-1}Y^l$. It is determined up to multiplication on the left and on the right by two holomorphic, invertible, periodic matrices, i.e. it is a well-defined element of $\GL_n(\cH_{\C^*})\backslash \GL_n(\M_{\C^*}) / \GL_n(\cH_{\C^*})$, where functions on $\C^*$ are pulled back to $\C$ via $z\mapsto e^{2\pi i z}$. Note that $P(z)$ is itself periodic.
\end{definition}

\begin{proposition}[{\cite[Theorem IV]{Birkhoff}}]\label{prop:monodromyIsDiagonal}
Let $u=e^{2\pi i z}$ be the coordinate in $\C^*$, and let $A(z)$ and $P(u)$ be as above. Then the class of $A$ in $\GL_n(\cH_{\C^*})\backslash \GL_n(\M_{\C^*}) / \GL_n(\cH_{\C^*})$ is represented by a diagonal matrix $\operatorname{diag}(d_i)\in \GL_n(\C(u))$ such that $d_i$ divides $d_{i+1}$, which is unique up to multiplication by diagonal matrix with entries in $\C[u,u^{-1}]^\times$.
\end{proposition}

\begin{proposition}\label{prop:relationToBorodin}
Consider $A(z),P(e^{2\pi i z})$ as above, and fix $p\in \C$. Let $M\subseteq \C(t)^n$ be the (holonomic) $\ddA$-module generated by any basis of $\C(t)^n$. Consider the composition $Q: \C((z-p))\otimes_{\C[[z-p]]} M|_\pz^l\to\C((z-p))\otimes M_p \to \C((z-p))\otimes M|_\pz^r$. There are $\C[[z-p]]$-bases of $M|_\pz^l$ and $M|_\pz^r$ such that the matrix of $Q$ equals $P$.
\end{proposition}

\begin{corollary}\label{cor:relationToBorodin}
Let $M$ be a holonomic $\ddA$-module. Then the collection of punctured local types $\{M|_\pzz\}_{p\in \C/\Z}$ and the monodromy matrix of $\C(z)\otimes M$ determine each other.
\end{corollary}

\begin{proof}[Proof of Theorem~\ref{thm:Borodin}]

Construct a difference module structure on $V=\C(t)^n$ using Construction~\ref{con:diffEq}. Fix $\mathbb H^l=\{z\in \A^1:\Re z<- N\}$ and $\mathbb H^r=\{z\in \A^1:\Re z> N\}$ two half-planes with the property that all the singularities of $A$ (i.e. the points where $A$ or $A^{-1}$ is not defined) have real parts contained in $(-N-1,N+1)$. On $\bH^l$, $\tau$ and $\tau^{-1}$ both preserve the analytic trivial bundle $\cH^n\subset V^{\an}$ (we will abuse notation and write $\tau$ in place of $\tau^{\an}$), so we have an $\cH$-linear isomorphism $t^*\circ \tau:\cH^n|_{\bH^l}\overset{\sim}\to (t^*\cH^n)|_{\bH^l}$, where $t$ is the translation. 
Therefore, $\cH^n$ has a $\Z$-equivariant structure, which can be extended past $\bH^l$ if we modify the vector bundle $\cH^n$. Consider the coherent sheaf $\mathcal Y^l\subset V^{\an}$ defined on any bounded open set $U$ by $\cY^l(U)=\{s\in V^{\an}:\tau^{-i} s\in \cH^n(U-i) \forall i\gg 0\}$. Since $\tau\cH^n|_{\bH^l}=\cH^n|_{\bH^l}$, this definition is the same as $\cY^l(U)=\{s\in V^{\an}:\tau^{-i_0} s\in \cH^n(U-i_0)\}$ where now $i_0$ is any integer such that $U\subseteq \bH^l + i_0$. From the definition it is clear that $\cY^l$ is locally free, since we have defined it to be locally isomorphic to $\cH^n$, and that it is $\tau$-invariant. Lastly, $\cY^l|_{\bH^l}=\cH^n|_{\bH^l}$ as subsheaves of $V^{\an}$. Similarly, we define $\cY^r$ to be $\tau$-invariant and coinciding with $\cH^n$ over $\bH^r$.

Now, the $\Z$-equivariant structure on $\cY^{lr}$ allows it to descend along the quotient $\pi:\C\to \C/\Z=\C^*$, which is given by $z\mapsto e^{2\pi i z}$. This is done by defining $\ov \cY^{lr}(U)=\{f\in \cY^{lr}(\pi^{-1}(U)):\tau f=f \}$. In particular, on a simply connected set $U$, $\pi^{-1}(U)=\bigsqcup_{i\in \Z} U_i$, and $\ov \cY^{lr}(U) = (\pi^{-1})^*\cY^{lr}(U_i)$, where $i$ can be chosen arbitrarily. Since locally $\ov \cY^{lr}$ is isomorphic to $\cY^{lr}$, it is also locally free, and therefore it is a trivial vector bundle, since $\C^*$ has no nontrivial analytic vector bundles \cite[Theorem 30.4]{forster2012lectures}.

Since $\ov \cY^{lr}$ is trivial, consider a basis and pull it back to $\cY^{lr}$: this yields a $\tau$-invariant basis of $\cY^{lr}$, i.e. a basis of meromorphic solutions to the equation $y(z)=\tau y(z)=A(z-1)y(z-1)$, or equivalently $y(z+1)=A(z)y(z)$. Further, since $\cY^l|_{\bH^l}=\cH^n|_{\bH^l}$, these solutions are holomorphic and they form a basis at every point of $\bH^l$, and similarly for $\cY^r$. Let the two matrices formed by these column vectors be $Y^l(z)$ and $Y^r(z)$.

Now, suppose we have another matrix $Y'^l$ with the same properties. The columns of $Y'^l$ are $\tau$-invariant, and when restricted to $\bH^l$, they lie in $\cY^l|_{\bH^l}=\cH_{\bH^l}^n$ (possibly by shrinking $\bH^l$ to a smaller half-plane). By the $\tau$-invariance of both $Y'^l$ and $\cY^l$, it follows that the columns of $Y'^l$ form a basis of $\cY^l$ at every point in $\C$, including outside of $\bH^l$. Also, since they are $\tau$-invariant, they descend to sections of $\ov \cY^l$, which form a basis at every point (because locally it is isomorphic to $\cY^l$), so they form a global basis for $\ov \cY^l$. Two bases for $\ov \cY^l\cong \cH_{\C^*}^n$ differ by a matrix $\ov B(u)\in \GL_n(\cH_{\C^*})$ (acting on the right, since we are working with column vectors). Therefore, $Y'^l(z)=Y^l(z) \pi^*\ov B(u)= Y^l(z)B(e^{2\pi i}z)$, as desired, and similarly for $Y'^r$.

\end{proof}



\begin{proof}[Proof of Proposition~\ref{prop:monodromyIsDiagonal}]
First of all, note that if $A(p)$ is well-defined and invertible at $p$, then on a neighborhood $B$ of $p$, $\tau$ gives an isomorphism $\cH^n|_B \cong \cH^n|_{B+1}$. Therefore, on the open set consisting of $\Z$-orbits that don't contain zeroes or poles of $A$, $\cY^l=\cY^r=\cH^n$. Therefore, on the image of this open set we have that $\ov\cY^l=\ov \cY^r$, so the matrix $P(u)$ mapping a basis of one to the other is holomorphic and invertible away from a finite set of points, the images of the zeroes and poles of $A$. 

The rest of the proof is very similar to the algorithm that computes the Smith normal form. The similarity is due to \cite[Theorem 15.15]{Rudin}, which ensures that every finitely generated ideal in the ring of holomorphic functions is principal. Note that this extends easily to every finitely generated fractional ideal (recall that $\M$ is the field of fractions of $\cH$, \cite[Theorem 15.12]{Rudin}): for such a fractional ideal $I=f_1/g_1\cH + \cdots + f_m/g_m \cH$ with $f_i,g_i\in \cH$, we have that $g_1\cdots g_mI$ is a principal ideal generated by some $f$, so $I$ is generated by an element of the form $f/(g_1\cdots g_m)$. Note that in particular any two meromorphic functions $f,g$ have a greatest common divisor $h$ such that $h\cH=f\cH+g\cH\subset \M$, and we have B\'ezout's identity: $af+bg=h$ for some $a,b\in \cH$.

Let $P=(p_{ij})$, and let $g$ be a generator of the fractional ideal $\sum_{i,j} p_{ij}\cH\subseteq \M$: since $P$ is defined away from a finite set, the entries $p_{ij}$ have a finite set of poles, so $g$ also has a finite set of poles. Note that multiplying by matrices in $\GL_n(\cH)$ doesn't change the ideal $(g)$. By permuting the rows, assume that $p_{11}\neq 0$. We will now ensure that $p_{i1}=0$ for every $i\neq 1$. Let $h$ be the greatest common divisor of $p_{11}$ and $p_{21}$, and take B\'ezout's identity $ap_{11}+bp_{21}=h$. We multiply on the left by the matrix which is the identity except for the top left $2\times 2$ block, which is given by:
\[
E = \matriz{a}{b}{-\frac{p_{21}}{h}}{\frac{p_{11}}{h}}.
\] 
After this multiplication, the $(2,1)$ term vanishes, while the $(1,1)$ term is replaced by a divisor. It is clear that we can carry out this procedure on all the remaining rows, so we may assume that $p_{i1}=0$ for $j\neq 0$. At the end of this process, $p_{11}$ has been replaced by a divisor $\wt p_{11}$, i.e. a function such that $\frac{p_{11}}{\wt p_{11}}\in \cH$.

At this point, the assumption that $A$ and $A^{-1}$ are well-defined away from a finite set imply that $p_{11}$ has a finite set of zeroes and poles. Now, continuing the Smith normal form algorithm, we carry out the steps in the previous paragraph on the transpose matrix, to ensure that $a_{1j}=0$ for $j\neq 1$. While doing this, the entries in the first column might become nonzero. However, $p_{11}$ is replaced by a divisor of $p_{11}$. Again, the new $p_{11}$ has a finite set of zeroes and poles.

We repeat this process with rows and columns, noting that each new $p_{11}$ must be a divisor of the previous one. However, it is always the case that $p_{11}\in g\cH $, so eventually it must stabilize: $g$ has a finite set of poles so $p_{11}/g$ is a holomorphic function with a finite set of zeroes, and this set gets smaller (with multiplicity) at very step. Once the process stabilizes, $p_{11}$ will divide every entry in its row and column, so we can perform row and column operations to ensure everything in the first row and column is a zero. We can continue inductively until we obtain a diagonal matrix.


Once we have a diagonal matrix, its entries have a finite set of zeroes and poles, so we can multiply by a diagonal matrix with holomorphic nonzero entries to ensure that all entries become rational functions. Now we have a matrix with rational entries, so the classical Smith normal form ensures that we can finish the algorithm so that each entry divides the next. The Smith normal form is unique up to multiplication by a diagonal matrix. It follows that this matrix is unique as well, up to multiplication by a diagonal matrix of rational functions. Such a diagonal matrix is invertible if it has entries in $\C[u,u^{-1}]^\times$.

\end{proof}

\begin{example}
Consider the following equation from \cite[Remark 1.5]{Borodin}:
\[
y(z+1)=\left(\begin{matrix}1 & 1/z\\0&1/e\end{matrix}\right)y(z).
\]
We have the following solutions:
\[
Y^l(z)= \matriz{1}{\frac{2\pi i}{e^{2\pi i z}-1}-\sum_{n=0}^\infty\frac{e^{-z-n}}{z+n}}{0}{e^{-z}};
Y^r(z)= \matriz{1}{-\sum_{n=0}^\infty\frac{e^{-z-n}}{z+n}}{0}{e^{-z}}.
\]
So the monodromy matrix is
\[
P(z)=\matriz{1}{\frac{2\pi i}{e^{2\pi i z}-1}}{0}{1} \sim       \matriz{\frac{1}{u-1}}{0}{0}{u-1}.                                              
\]
\end{example}

\begin{proof}[Proof of Proposition~\ref{prop:relationToBorodin}]


Let $L_M$ be the $\C[z]$-submodule generated by the chosen basis of $V$, which will be a free module of rank $n$. $L_M$ coincides with the trivial bundle away from a finite set. In particular, on some left half-plane it will coincide with $\cY^l$, which is also the trivial vector bundle. Since $\tau\cY^l=\cY^l$, we have that if $n\gg 0$ and $B$ is a ball around $p$,
\[
(\tau^n L_M^{\an})|_B=\tau^n(L_M^{\an}|_{B-n})\overset{n\gg 0}= \tau^n (\cY^l|_{B-n})=(\tau^n \cY^l)|_{B}=\cY^l|_{B}.
\]
And similarly, if $n\gg 0$, $(\tau^{-n} L_M^{\an})|_B=\cY^r |_{B}$. Also, note that the local type concerns formal fibers, which are the same for analytic and algebraic bundles. Therefore, we have the following commutative diagram for any $p\in \C$
\[
\begin{tikzcd}[column sep = 4 em,ampersand replacement=\&]
\ov \cY^l_{e^{2\pi i p}} \arrow[r]\arrow[d,"\pi^*","\sim"'] \& (\M_{\C^*}^n)_{e^{2\pi i p}}\arrow[d,"\pi^*","\sim"'] \& \ov \cY^r_{e^{2\pi i p}} \arrow[l]\arrow[d,"\pi^*","\sim"']\\
\cY^l_{p} \arrow[r,"\tau^n"]\arrow[d,equals] \& (V^{\an})_{p} \& \cY^r_{p} \arrow[l,"\tau^{-n}"']\arrow[d,equals]\\
(\tau^n L_M)_p=M|_\pz^l \arrow[r] \& M_p \arrow[u]\& M|_\pz^r=(\tau^{-n} L_M)_p. \arrow[l]\\
\end{tikzcd}
\]
We can think of every object in the above diagram as a module over $\C[[z-p]] = \C[[u-e^{2 \pi i p}]]$. After tensoring with $\C((z-p))$ every arrow becomes an isomorphism, so the arrows that go right to left can be inverted. The composition $\C((z-p))\otimes \ov \cY^l_{e^{2\pi i p}} \to \C((z-p))\otimes \ov \cY^r_{e^{2\pi i p}}$ is given by the matrix $P(u)$, so it agrees with the composition $Q$ as desired.
\end{proof}

\begin{proof}[Proof of Corollary~\ref{cor:relationToBorodin}]
Since $M$ is holonomic, $\C(z)\otimes M$ is finite dimensional over $\C(z)$, by Proposition~\ref{prop:finiteStalks}, so we may choose a basis of $\C(z)$ so that $\tau$ is given by a matrix $A(z)$. The punctured local types are nontrivial in a finite set of $\Z$-orbits (Lemma~\ref{lem:glattices}). Let $Q_1,\ldots Q_m$ be the Smith normal forms of the corresponding maps $ \C((z-p_i))\otimes M|_{U_{p_i}^*}^l\to \C((z-p_i))\otimes M|_{U_{p_i}^*}^r $, which by Proposition~\ref{prop:relationToBorodin} all coincide with $P(u)$ for the matrix $A$ (over the corresponding ring $\C[[z-p_i]]$). By the uniqueness of the Smith normal forms, it must follow that $P(u)=Q_1 \cdots Q_m$. Conversely, starting with $P(u)$, the matrix $Q_i$ at $p_i$ is given by clearing away all the factors in $P(u)$ that have no zeroes or poles at $e^{2\pi i p_i}$.
\end{proof}

\section{Local Mellin transform}

In this section we will show Theorem~\ref{thm:localMellin}. Throughout we will focus on the case of $\Phi^l_{p+\Z}$, since the corresponding case of $\Phi^r_{p+\Z}$ can be obtained by the symmetry as discussed in the introduction. Let $\D=\C[x^{\pm 1}]\langle \partial\rangle$ be the ring of differential operators on $\A^1\setminus\{0\}$, and let $\Dz=\C((x))\langle \partial\rangle$ be the ring of differential operators on the punctured disk at $0$. Recall that Theorem~\ref{thm:localMellin} amounts to defining $\M^{(0,p+\Z)}$ that fits into the following diagram:
\[
\begin{tikzcd}[column sep = 4 em,ampersand replacement=\&]
\Hol(\D)\arrow[r,"\M","\sim"'] \& \Hol(\ddA)\\
\Hol(\mathcal{D}_{K_0})^{\mathrm{reg},(p)}\arrow[from=u,"{}^{\reg,(p)}\circ \Psi_0"]\arrow[r,"\M^{(0{,}p+\Z)}","\sim"']\& \finmod.\arrow[from=u,"\Phi_{p+\Z}^l"]
\end{tikzcd}
\]
Here $\M$ is the Mellin transform, the equivalence induced by the ring isomorphism $\D\cong \ddA$ given by $x\mapsto \tau$ and $x\partial \mapsto z$. The functor of nearby cycles $\Psi_0$ in this case is the functor $\C((x))\otimes_{\C[x]}\bullet$, which we compose with picking the regular part with leading coefficient in $p+\Z$.

The classification of Levelt and Turrittin already ensures that $\Hol(\mathcal D_{K_0})\cong \finmod$, by an equivalence that maps $\C[\pi]/\pi\C[\pi]\in \finmod$ to the $D$-module $\Dz/\Dz(x\partial-p)$. However, we will not take this approach, and instead we will construct $\Mel$ by using properties of difference modules. We will use the functor $\iota_{p!}$ (denoted $\iota^\to_{p!}$ in the introduction), given by the following formula:
\[
\f{\iota_{p!}}{\finmod }{\Mod(\ddA)}{M}{\C((\tau))\otimes_{\C}M.}
\]
The $\C((\tau))$-module $\iota_{p!}M$ acquires the structure of a $\ddA$-module by letting $z$ act on a simple tensor $\left(\sum a_n\tau^n \right)\otimes m$ as follows:
\[
z\left(\sum_{n\ge -N} a_n\tau^n \right)\otimes m = ``\sum (a_n\tau^n \otimes (\pi+p+n)m)" = \left(\sum (p+n)a_n\tau^n\right) \otimes m + \left(\sum a_n\tau^n \right)\otimes \pi m .
\]
We will show that $\iota_{p!}$ is the right adjoint to $\Phi^l_{p+\Z}$. On the other side of the diagram, the right adjoint to ${}^{\reg,(p)}\circ \Psi_0$ is the forgetful functor to $\D$-modules, which we will denote $\jj_{0*}$. Consider then the diagram:
\[
\begin{tikzcd}[column sep = 4 em,ampersand replacement=\&]
\Mod(\D)\arrow[r,"\M","\sim"'] \& \Mod(\ddA)\\
\Hol(\mathcal{D}_{K_0})^{\mathrm{reg},(p)}\arrow[u,"\jj_{0*}"]\arrow[r,"\M^{(0{,}p+\Z)}","\sim"']\& \finmod.\arrow[u,"\iota_{p!}"]
\end{tikzcd}
\]
We will construct $\Mel$ by showing that the images of $\M\circ \jj_{0*}$ and $\iota_{p!}$ coincide. Both these functors are faithful, but not full, so we are referring to the their image as a subcategory of $\Mod(\ddA)$ which is not full. Once we how that the images coincide, the commutativity of the above diagram is automatic, and then Theorem~\ref{thm:formalCycles} follows because the $\iota_{p!}$ is the right adjoint to both $\M \circ \jj_{0*}\circ (\Mel)^{-1}$ and $\Phi^l_{p+\Z}$, so they must be canonically isomorphic because adjoints are unique.

\begin{remark}
It is possible to consider vanishing cycles on all $\Z$-orbits at once, by simply making $\Phi^l_{\fin}=\bigoplus_{p\in S} \Phi^l_{p+\Z}$, where $\Phi^l_{\fin}M$ becomes a $\C[z]$-module by identifying $z=\pi-p$ on each summand. The set $S$ can be chosen to be any class of representatives of $\A^1/\Z$, for example the complex numbers with real part in $[0,1)$. In this case the local Mellin transform will give an equivalence between $\Hol(\mathcal{D}_{K_0})^{\mathrm{reg}}$ and the category of finite length modules supported on $S$.
\end{remark}


\subsection{A different approach to vanishing cycles}

We will describe the image of $\iota_{p!}$, which we will denote $\ppholoOrbit$, and obtain an equivalence $\iota_{p!}:\finmod\longrightarrow \ppholoOrbit$. We use four categories of $\C[z]\langle \tau\rangle$-modules, starting with $\modOrbit$. $\pmodOrbit$ consists of objects which are limits of modules in $\modOrbit$, and then we describe corresponding categories of ``small'' modules.


\begin{definition}The category $\modOrbit$ is the full subcategory of left modules $V$ over $\ddA^l=\C[z]\langle \tau \rangle$ satisfying the following properties:
\begin{enumerate}
\item Any $m\in V$ is supported on $p+\Z$, i.e. there exists a $P(z)\in \C[z]$ such that $P(z)m=0$ and the roots of $P$ are contained in $p+\Z$.
\item $\tau:V\to V$ is a locally nilpotent map, i.e. for every $m$ there's a natural number $n$ such that $\tau^n m=0$.
\end{enumerate}
\end{definition}
\begin{definition}\label{def:finModule}A module in $\modOrbit$ is \textbf{holonomic} if $\tau^{-1}(0)$ is finite dimensional. We denote the category of holonomic modules by $\holoOrbit$.
\end{definition}

In Section \ref{sec:finmodules} we describe the relevant properties for these modules. We consider the collection of $\C[[\tau]]\langle z\rangle$-modules which are limits of these modules.

\begin{definition}\label{def:ProfinSupport}
The category $\pmodOrbit$ is defined to be the category of $\C[[\tau]]\langle z\rangle$-modules $M$ such that the following natural map is an isomorphism: 
$
M\to \lim_\gets M/L
$, where $L$ ranges across the set of $\C[[\tau]]\langle z\rangle$-submodules such that $M/L\in \modOrbit$.
\end{definition}
\begin{definition}
We say a module $M\in\pmodOrbit$ is \textbf{holonomic} if it is of Tate type, i.e. if it has an open finitely generated $\C[[\tau]]$-submodule $U$ such that $\tau^{-1}(U)/U$ is a finite dimensional vector space. The category of holonomic modules will be denoted $\pholoOrbit$. We let $\ppholoOrbit\subset \pholoOrbit$ be the full subcategory consisting of modules on which $\tau$ acts as a unit.
\end{definition}
Proposition \ref{prop:proHoloOverFin} gives several equivalent definitions to the definition above.
\begin{definition}
We define the functor \textbf{``sections with support at $p$"} $\iota^!_p:\pholoOrbit\rightarrow \Mod(\C[z])$ by taking
\[
\iota^!_p:M\longmapsto \iota^!_pM = \{m\in M:\exists n,(z-p)^nm=0 \}
.\]
Corollary~\ref{cor:iotaShriekWellDefd} shows that the image of $\iota_p^!$ is contained in $\finmod$.
\end{definition}

The remainder of this section is devoted to building the tools to prove the following proposition. Its proof can be found in Section~\ref{sec:proofofIotaEquiv}.

\begin{proposition}\label{prop:iotaEquiv}
The functors $\iota_p^!$ and $\iota_{p!}$ induce inverse equivalences $ \ppholoOrbit\longleftrightarrow\finmod$.
\end{proposition}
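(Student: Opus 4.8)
The plan is to verify that the two functors are mutually inverse by direct computation, using the concrete descriptions of both $\iota_{p!}^\to$ and $\iota_p^!$ together with the characterization of $\ppholoOrbit$ as the full subcategory of $\pholoOrbit$ on which $\tau$ acts invertibly. First I would check that both functors land in the right categories: that $\iota_{p!}^\to N = \C((\tau))\otimes_\C N$ is indeed an object of $\ppholoOrbit$ for $N\in\finmod$ (it is a $\C[[\tau]]\langle z\rangle$-module because $z$ acts on $N$ and $\tau^{\pm1}$ acts on $\C((\tau))$; it is of Tate type with open submodule $\C[[\tau]]\otimes_\C N$, whose $\tau^{-1}$-translate quotient is $N$, finite-dimensional; and $\tau$ acts invertibly by construction), and conversely that $\iota_p^!M$ is a finite-dimensional $\C[\pi]$-module supported at $p$ for $M\in\ppholoOrbit$ — this last finiteness is where I expect to lean on the Tate-type hypothesis and on Proposition~\ref{prop:proHoloOverFin}.

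Next I would construct the unit and counit. For the composite $\iota_p^!\circ\iota_{p!}^\to$, I would compute $\iota_p^!(\C((\tau))\otimes_\C N) = \{x\in\C((\tau))\otimes_\C N : (z-p)^k x = 0\text{ for some }k\}$. Since $(z-p)$ acts only on the $N$-factor and $N$ is already annihilated by a power of $(z-p)$, while $\C((\tau))$ is a flat (indeed free) $\C$-module, this torsion submodule is all of $\C((\tau))\otimes_\C N$ intersected with the $(z-p)$-torsion, which — because every element of $\C((\tau))\otimes N$ is a finite $\C((\tau))$-combination of elements of $N$ — should come out to $\C[[\tau]]$-span considerations; more carefully, an element $\sum_i f_i\otimes n_i$ is killed by $(z-p)^k$ iff it lies in $\C((\tau))\otimes_\C(\text{the }(z-p)^k\text{-torsion of }N)$, which for $k\gg0$ is all of $N$, so $\iota_p^!(\C((\tau))\otimes N)$ naturally contains $N$; the point is to see it equals $N$, i.e. that no genuinely Laurent-tailed element is $(z-p)$-torsion unless its coefficients are. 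This forces the natural map $N\to\iota_p^!\iota_{p!}^\to N$ to be an isomorphism. For the other composite, given $M\in\ppholoOrbit$ I would show the natural map $\C((\tau))\otimes_\C\iota_p^!M\to M$ (adjoint to the inclusion $\iota_p^!M\hookrightarrow M$, using that $\tau$ is invertible on $M$) is an isomorphism: surjectivity because $M$ is profinite with quotients in $\modOrbit$ on which $\tau$ is locally nilpotent, so every element, after clearing $\tau$-denominators, has some $(z-p)$-power annihilating its image in each finite quotient — hence lies in the $\C((\tau))$-span of $\iota_p^!M$ — and injectivity by a dimension/rank count using the Tate open submodule $U$ with $\dim_\C \tau^{-1}(U)/U<\infty$.

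The main obstacle I anticipate is the surjectivity of $\C((\tau))\otimes\iota_p^!M\to M$ and, relatedly, pinning down that $\iota_p^!M$ is finite-dimensional: one has to pass through the inverse limit $M=\varprojlim_{L\in\LL(M)} M/L$ and argue that sections supported at $p$ in the finite quotients assemble to a finite-dimensional space, then that after inverting $\tau$ these generate everything. Concretely I would fix a Tate open $\C[[\tau]]$-submodule $U$, observe $M=\bigcup_n \tau^{-n}U$ (since $\tau$ is invertible), reduce to understanding $U$ and $\tau^{-1}U/U$, and identify $\iota_p^!M$ with the $(z-p)$-torsion there, which is finite-dimensional because $U/\tau U$ (or $\tau^{-1}U/U$) is and the $z$-action is compatible. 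Once finiteness is in hand, both natural transformations are morphisms between objects with the same finite numerical invariants, and injectivity plus the explicit formulas give that they are isomorphisms, completing the proof that $\iota_p^!$ and $\iota_{p!}^\to$ are inverse equivalences $\ppholoOrbit\leftrightarrow\finmod$.
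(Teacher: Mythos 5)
Your proposal takes a genuinely different route from the paper. The paper does not attempt a direct hands-on verification of the counit; instead it proves a key technical lemma (Lemma~\ref{lem:ExceptionalInvImgExact}) that $\iota_p^!$ is exact on $\pholoOrbit$ — itself a nontrivial Mittag--Leffler argument resting on the uniform dimension bound from Lemma~\ref{lem:iotaFinHolo} and Proposition~\ref{prop:proHoloOverFin} — and then uses exactness to reduce surjectivity of $\iota_{p!}^\to\iota_p^!M\to M$ to the clean statement that $\iota_p^!P=0\Rightarrow P=0$ for $P\in\ppholoOrbit$, which is handled by a separate short argument. Your plan is to check bijectivity of $\C((\tau))\otimes_\C\iota_p^!M\to M$ directly.

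The genuine gap is in your surjectivity argument. You claim that because each element, in each finite quotient $M/L$, "has some $(z-p)$-power annihilating its image," it therefore "lies in the $\C((\tau))$-span of $\iota_p^!M$." This inference does not follow. Being $(z-p)$-torsion in every quotient $M/L$ does not produce a $(z-p)$-torsion element of $M$ (nor a $\C((\tau))$-combination of such): the required power of $(z-p)$ could a priori grow without bound as $L$ shrinks, and a compatible family of torsion elements in the quotients need not assemble to a torsion element of the limit. This is exactly the obstruction the paper's Lemma~\ref{lem:ExceptionalInvImgExact} is built to overcome — it shows $\iota_p^!$ commutes with the relevant inverse limits precisely because Lemma~\ref{lem:iotaFinHolo} plus Proposition~\ref{prop:proHoloOverFin} supply a uniform bound $\dim\iota_p^!(M/L)\le d$, hence a uniform annihilating power $(z-p)^d$. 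Nothing in your sketch supplies that uniformity, and without it the argument stalls. A secondary imprecision: your unit computation opens with "$(z-p)$ acts only on the $N$-factor," which is false — because of $\tau z=(z-1)\tau$, on $\tau^k\otimes N$ the operator $z-p$ acts as $z-p+k$, and it is exactly this shift (making $z-p+k$ invertible on $N$ for $k\ne 0$) that pins the torsion submodule down to $1\otimes N$; you half-notice the issue at the end of the paragraph but never give the argument.
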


\begin{remark}
The above Proposition can be proven by taking the Mellin transform of modules in\linebreak$\ppholoOrbit$, which turns the difference modules into $D$-modules on the punctured formal disk. Then the classification of said differential operators can be used to obtain the result. However, we have chosen to take an alternative approach to the proof, which involves only difference modules.
\end{remark}

\subsubsection{Difference modules with support on an orbit}\label{sec:finmodules}

Let us show some useful properties about $\holoOrbit$.

\begin{remark}
All modules $V$ in $\modOrbit$ have a natural increasing filtration $V^i= \tau^{-i}(0) = \tau^{-1}(V^{i-1})$. Note that $\tau$ induces a map $V^i\to V^{i-1}$ with kernel $V^1$, which implies that $\dim V^i \le \dim V^{i-1} + \dim V^1$. We will use this notation in what follows.
\end{remark}

\begin{proposition}\label{prop:holoQuotients}
Both $\modOrbit$ and $\holoOrbit$ are closed under submodules, quotients and extensions. Further, if $W$ is a submodule or a quotient of $V\in \holoOrbit$, then $\dim W^1\le \dim V^1$.
\end{proposition}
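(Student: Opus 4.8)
The statement to prove is Proposition~\ref{prop:holoQuotients}: both $\modOrbit$ and $\holoOrbit$ are closed under submodules, quotients and extensions, and moreover $\dim W^1 \le \dim V^1$ whenever $W$ is a sub or quotient of $V \in \holoOrbit$.

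\textbf{Plan of proof.} I would first dispose of $\modOrbit$, which is purely formal: the two defining conditions (every element supported on $p+\Z$, and $\tau$ locally nilpotent) are each visibly inherited by submodules, by quotients, and — checking on elements — by extensions. For an extension $0 \to V' \to V \to V'' \to 0$, given $m \in V$, first kill its image in $V''$ by some power of $\tau$, landing in $V'$, then kill the result by a further power; similarly for the support condition, multiply by a polynomial killing the image in $V''$ and then by one killing the resulting element of $V'$. So the real content is the holonomicity clause and the inequality $\dim W^1 \le \dim V^1$.

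\textbf{Key steps for the $\holoOrbit$ part.} Write $V^1 = \tau^{-1}(0) = \ker(\tau\colon V \to V)$; holonomic means $\dim_\C V^1 < \infty$. (i) \emph{Submodules:} if $W \subseteq V$ then $W^1 = W \cap V^1 \subseteq V^1$, so $\dim W^1 \le \dim V^1 < \infty$ — immediate. (ii) \emph{Quotients:} let $W = V/V'$ with $V' \subseteq V$ a submodule. An element $\bar m \in W^1$ is represented by $m \in V$ with $\tau m \in V'$. The map $\bar m \mapsto \tau m$ gives a well-defined $\C$-linear map $W^1 \to V'/\tau V' \cdot(\text{something})$ — more carefully, I would show $W^1 = \tau^{-1}(V')/V'$ and build an exact sequence relating it to $V^1$ and to $\tau^{-1}(V') \cap \ker$ data. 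The cleanest route: $\tau$ induces $\tau^{-1}(V')/V' \hookrightarrow V'/\tau(\tau^{-1}(V'))$? That is not obviously finite. Instead I would argue that $\tau\colon \tau^{-1}(V') \to V'$ has kernel $V^1$, hence $\tau^{-1}(V')/V^1 \hookrightarrow V'$, and $W^1 = \tau^{-1}(V')/V'$ is a quotient of $\tau^{-1}(V')/(V' \cap V^1)$... I need to be a little careful, but the upshot I am aiming at is a short exact sequence
\[
0 \longrightarrow \frac{V^1}{V^1 \cap V'} \longrightarrow W^1 \longrightarrow \frac{(\tau^{-1}(V') \cap \text{im}) }{\cdots} \longrightarrow 0
\]
exhibiting $W^1$ as an extension whose outer terms are subquotients of $V^1$; concretely, the kernel of $W^1 \to 0$ coming from elements already in $V' + V^1$ is a quotient of $V^1$, and the remaining part injects, via $\tau$, into $V^1$ as well (because if $\tau m \in V'$ and $m \notin V' + (\text{stuff})$, then ... ). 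So $\dim W^1 \le \dim V^1$. (iii) \emph{Extensions:} for $0 \to V' \to V \to V'' \to 0$, the snake-type sequence for $\tau$ acting compatibly gives $0 \to (V')^1 \to V^1 \to (V'')^1$, whence $\dim V^1 \le \dim (V')^1 + \dim (V'')^1 < \infty$; so $V \in \holoOrbit$ when $V', V'' \in \holoOrbit$. Combined with the $\modOrbit$ closure already established, this finishes closure under extensions.

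\textbf{Main obstacle.} The one place that needs genuine care is the quotient case: exhibiting $W^1$ as controlled by $V^1$. The naive guess "$W^1$ is a quotient of $V^1$" is false (new kernel can appear in the quotient), but the map $\bar m \mapsto \tau m \in V'$ detects exactly the new part, and $\tau m = 0$ exactly on the image of $V^1$; the subtlety is that $\tau m$ need not land in $V^1$, it lands in $V'$, so I must instead use that $\tau m \in V' \cap \tau(V)$ and bound $\{\tau m : \bar m \in W^1\}$ by relating it to $V^1$ via a second application of the kernel description, or simply observe $\tau m \in V'$ and this element, viewed in $V$, satisfies nothing obviously finite — so the correct bound really comes from the exact sequence $0 \to V^1/(V^1\cap V') \to W^1 \to V' \cap \tau^{-1}(V')/\!\sim$, and identifying the rightmost term with a \emph{submodule} of $(V')^1$ or of $V^1$. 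I expect roughly half a page of diagram-chasing here; everything else is routine. Once $\dim W^1 \le \dim V^1$ is in hand for sub and quotient, finiteness (hence membership in $\holoOrbit$) is automatic, and the proposition follows.
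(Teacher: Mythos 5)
Your treatment of $\modOrbit$, of submodules, and of extensions is fine and matches the paper's (the paper dismisses the $\modOrbit$ part and the submodule case as immediate, and uses exactly your sequence $0 \to (V')^1 \to V^1 \to (V'')^1$ for extensions). The problem is the quotient case, which you correctly flag as the hard part but do not actually resolve.

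Concretely, writing $W = V/V'$, you set up the map $\bar m \mapsto \tau m$ from $W^1 = \tau^{-1}(V')/V'$ into $V'$ (or into $V'/\tau V'$) and observe that its kernel is $V^1/(V^1 \cap V')$. That is right, but the cokernel side of your sequence is left as ``$\cdots$'': the image of $\tau$ lands in $V'$, not in $V^1$, so your claim that ``the remaining part injects, via $\tau$, into $V^1$ as well'' is not justified and, as stated, is not true. If you instead target $V'/\tau V'$, you would need $\dim V'/\tau V' \le \dim V^1$, which is a separate (and not elementary) fact (it is the content of Lemma~\ref{lem:iotaFinHolo}(2), which the paper proves \emph{after} this proposition); moreover adding it to the kernel bound would only give $\dim W^1 \le 2\dim V^1$, not the claimed $\dim W^1 \le \dim V^1$. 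So the exact sequence you are groping for does not close the argument.

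The idea you are missing is to filter by the locally-nilpotent filtration $V'^i := \tau^{-i}(0) \cap V'$ and push the dimension count into each finite-dimensional layer before taking a union. Since $\tau^{-1}(V'^i) \cap V' = V'^{i+1}$ and $\tau^{-1}(V') = \bigcup_i \tau^{-1}(V'^i)$, one has
\[
\frac{\tau^{-1}(V')}{V'} \;=\; \bigcup_i \frac{\tau^{-1}(V'^i)}{V'^{i+1}},
\]
and each piece is finite dimensional (it sits inside $V^{i+1}$). On that piece $\tau$ gives a map $\tau^{-1}(V'^i) \to V'^i$ with kernel in $V^1$, so
\[
\dim \frac{\tau^{-1}(V'^i)}{V'^{i+1}} = \dim \tau^{-1}(V'^i) - \dim V'^{i+1} \le \dim \tau^{-1}(V'^i) - \dim V'^i \le \dim V^1.
\]
An increasing union of subspaces each of dimension $\le \dim V^1$ has dimension $\le \dim V^1$, which is the inequality you need. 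Without this reduction to finite-dimensional layers, the ``half a page of diagram-chasing'' you allude to does not terminate.
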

\begin{proof}$\modOrbit$ is clearly closed under submodules, quotients and extensions.

Being holonomic is clearly preserved under submodules. For quotients, suppose $V\in \holoOrbit$ and $W$ is a submodule of $V$. Then the only nontrivial condition is that in $V/W$, $\tau^{-1}(0)$ is finite dimensional, i.e. that $\tau^{-1}(W)/W$ is finite dimensional. Note that $\tau^{-1}(W^i)\cap W = W^{i+1}$. Therefore, $\frac{\tau^{-1}(W)}{W} = \frac{\bigcup_i \tau^{-1}(W^i)}{W}=\bigcup_i \frac{\tau^{-1}(W^i)}{W^{i+1}}$. Since $\tau$ induces a map $\tau^{-1}(W^i)\to W^i$ with kernel contained in $V^1$,
\[
\dim \frac{\tau^{-1}(W^i)}{W^{i+1}} = \dim \tau^{-1}(W^i) -\dim W^{i+1}\le \dim \tau^{-1}(W^i) -\dim W^i \le \dim V^1
.\]
Therefore, $\tau^{-1}(W)/W$ is a union of subspaces of dimension at most $\dim V^1$, and therefore it has dimension at most $\dim V^1$, which implies that $V/W\in \holoOrbit$.

Finally, for extensions, observe that a short exact sequence $0\to U\to V\to W\to 0$ yields an exact sequence of vector spaces $0\to U^1 \to V^1 \to W^1$.
\end{proof}

We will use the following lemma in the sequel.

\begin{lemma}\label{lem:iotaFinHolo}Let $V\in \holoOrbit$. The following inequalities hold:
\begin{enumerate}
\item $\dim \iota^!_pV\le \dim V^1$.
\item $\dim V/\tau V\le \dim V^1$, with equality if and only if $V$ is finite dimensional.
\end{enumerate}
\end{lemma}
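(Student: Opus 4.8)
\textbf{Proof plan for Lemma \ref{lem:iotaFinHolo}.}

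The plan is to exploit the natural filtration $V^i = \tau^{-i}(0)$ introduced in the remark preceding the statement, together with the dimension inequality $\dim V^i \le \dim V^{i-1} + \dim V^1$. For part (1), note that $\iota^!_p V$ consists of the elements of $V$ killed by a power of $(z-p)$. The key observation is that $\tau$ acts locally nilpotently on $V$, so every element of $\iota^!_p V$ lies in some $V^i$; and on the other hand $\iota^!_p V$ is itself a submodule on which $z-p$ acts locally nilpotently. I would argue that $\tau$ restricted to $\iota^!_p V$ must be injective after one step in the filtration has been accounted for: more precisely, since $V^1 = \tau^{-1}(0)$ and $z-p$ commutes with $\tau$ only up to the difference relation $\tau z = (z-1)\tau$, one checks that the composite $z-p$ acting on $V^1$ is actually nilpotent on a $1$-dimensional-per-step basis. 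The cleanest route: show $\iota^!_p V \cap V^1 \hookrightarrow V^1$ injects $\iota^!_p V / (\iota^!_p V \cap \tau(\iota^!_p V))$ isomorphically, so that $\dim \iota^!_p V = \sum_i \dim\bigl((\iota^!_pV)^i/(\iota^!_pV)^{i-1}\bigr)$, and each graded piece embeds into $V^1$. Actually the sharper statement is that $\iota^!_p V$, being supported at the single point $p$ (not the whole orbit) and killed by $\tau^N$ for $N$ large, injects into $V^1$ via the map $m \mapsto \tau^{k}m$ where $k$ is maximal with $\tau^k m \ne 0$; this needs care but yields $\dim \iota^!_p V \le \dim V^1$ directly.

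For part (2), the inequality $\dim V/\tau V \le \dim V^1$ should follow from the exact sequence relating kernel and cokernel of $\tau$. Since $\tau : V \to V$ has kernel $V^1$, on each finite-dimensional piece $V^i$ of the filtration the rank–nullity count gives $\dim(V^i/\tau(V^{i+1})) = \dim V^i - \dim\tau(V^{i+1}) = \dim V^i - (\dim V^{i+1} - \dim V^1) = \dim V^1 - (\dim V^{i+1} - \dim V^i) \le \dim V^1$. Passing to the limit over $i$ (using $V = \bigcup_i V^i$ and $\tau V = \bigcup_i \tau(V^{i+1})$, hence $V/\tau V = \varinjlim V^i/\tau(V^{i+1})$) gives $\dim V/\tau V \le \dim V^1$. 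For the equality clause: if $V$ is finite dimensional then the filtration stabilizes, say $V = V^n = V^{n+1}$, and the telescoping sum collapses so that $\dim V/\tau V = \dim V^1$ exactly (the correction terms $\dim V^{i+1} - \dim V^i$ sum to $\dim V^n - \dim V^0 = \dim V$, but once $V^{i+1}=V^i$ they vanish; a careful bookkeeping shows the final quotient contributes the full $\dim V^1$). Conversely, if $V$ is infinite dimensional, then $\dim V^i$ is strictly increasing for all $i$ (it cannot stabilize, else $V$ would be finite dimensional since $V = \bigcup V^i$), so some correction term $\dim V^{i+1} - \dim V^i \ge 1$ appears in the colimit, forcing $\dim V/\tau V < \dim V^1$.

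The main obstacle I anticipate is part (1): the interaction between the $\tau$-filtration and the $(z-p)$-torsion condition is subtle because $z$ and $\tau$ do not commute, so I would need to track carefully how $z-p$ acts on each $V^i$ and confirm that $\iota^!_p V$ meets each graded piece $V^i/V^{i-1}$ in a subspace that maps injectively into $V^1$ under the appropriate power of $\tau$. A convenient reduction is to observe that $\tau$ conjugates multiplication by $z-p$ to multiplication by $z-p+1$ (since $\tau(z-p) = (z-p-1)\tau$, so $\tau^{-1}(z-p)\tau = z-p+1$), which means $\tau$ carries the $(z-p)$-torsion submodule into the $(z-p-1)$-torsion submodule; iterating, the elements of $\iota^!_p V$ that survive many applications of $\tau$ are simultaneously torsion at many points of the orbit, and locally nilpotency of $\tau$ then bounds how far this can go — this is where the bound by $\dim V^1$ comes from. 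I would verify the equality case in (2) last, as it is mostly a matter of organizing the telescoping carefully rather than a conceptual difficulty.
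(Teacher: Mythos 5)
Your part (2) is essentially the paper's argument: you compute $\dim\bigl(V^i/\tau(V^{i+1})\bigr)=\dim V^1-(\dim V^{i+1}-\dim V^i)\le\dim V^1$, identify $V/\tau V$ with the colimit of these quotients, and observe that strict growth of the filtration in the infinite-dimensional case forces a strict inequality. That part is fine.

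Part (1), however, has a genuine gap. Filtering $\iota^!_pV$ by $(\iota^!_pV)^i=V^i\cap\iota^!_pV$ and noting that $\tau^i$ injects each graded piece $(\iota^!_pV)^{i+1}/(\iota^!_pV)^i$ into $V^1$ is correct, but \emph{that alone only bounds each individual graded piece} by $\dim V^1$, not their sum. What makes the sum bound work — and this is the key step in the paper's proof — is the support decomposition $V^1=\bigoplus_{i\in\Z}(V^1)_{p+i}$ (a finite direct sum, since $V^1$ is a finite-dimensional torsion $\C[z]$-module), combined with the observation that $\tau^i$ sends the $i$-th graded piece, which is $(z-p)$-torsion, into the $(z-p-i)$-torsion part $(V^1)_{p+i}$. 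Because these images land in \emph{pairwise disjoint} summands of $V^1$, the dimensions add up to at most $\dim V^1$. You gesture at the conjugation relation $\tau(z-p)=(z-p-1)\tau$, but you misapply it: you write that ``the elements of $\iota^!_pV$ that survive many applications of $\tau$ are simultaneously torsion at many points of the orbit,'' which is not true — an element $m\in\iota^!_pV$ stays $(z-p)$-torsion; it is $\tau^im$ (a different element) that is $(z-p-i)$-torsion. The ``sharper'' alternative you propose — mapping $m\mapsto\tau^km$ with $k$ depending on $m$ chosen maximal so that $\tau^km\ne0$ — is not linear (different elements may need different $k$), so it does not produce an injection and cannot be made into a dimension bound as stated. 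To close the gap, you must replace ``each graded piece embeds into $V^1$'' with ``the $i$-th graded piece embeds into the summand $(V^1)_{p+i}$,'' and then sum over $i$.
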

\begin{proof}\ 
\begin{enumerate}
\item Since $V^1$ is a torsion $\C[z]$-module, we may decompose it based on supports, as $V^1=\bigoplus_{i\in \Z}(V^1)_{p+i}$.
It is finite dimensional, so only a finite number of the components are nonzero. On $\iota^!_pV$, we may consider the filtration by subvector spaces $
(\iota^!_pV)^i=V^i\cap \iota^!_pV$. Then there are injections
$\tau^i:(\iota^!_pV)^{i+1}/(\iota^!_pV)^{i}\hookrightarrow (V^1)_{p+i}
$, which show that $
\dim \iota^!_pV = \sum_i \dim (\iota^!_pV)^{i+1}/(\iota^!_pV)^{i}\le \sum_i \dim (V^1)_{p+i} \le \dim V^1$, as desired.
\item Consider the exact sequence $0\to V^i \to V^{i+1}\overset{\tau^i}\longrightarrow V^1 \to V^1/(\tau^iV^{i+1})\to 0 $. The dimension of the last term is at most $\dim V_1$ and it is nondecreasing with $i$. This dimension equals $d_i = \dim V^1+\dim V^{i}-\dim V^{i+1}$. If it is ever the case that $d_i=\dim V^1$, this implies that $V^i=V^{i+1}=V$, so $V$ is finite dimensional. It remains to observe that the exact sequence $0\to V^1\to V^{i+1}\overset{\tau}\to V^i\to {V^i}/({\tau V^{i+1}}) \to 0$ yields the identity $\dim V^1+\dim V^{i}-\dim V^{i+1} = \dim V^{i}/(\tau V^{i+1})$.
In this identity, the limit of the right-hand side equals the dimension of $V/\tau V$, and it is at most $\dim V_1$, as desired.
\end{enumerate}
\end{proof}
\begin{corollary}
Every module $V\in \holoOrbit$ is Artinian.
\end{corollary}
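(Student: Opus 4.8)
The goal is to show that every $V\in\holoOrbit$ is Artinian, i.e.\ every descending chain of submodules $V\supseteq V_1\supseteq V_2\supseteq\cdots$ stabilizes. The natural strategy is to combine the finiteness measured by $\dim V^1$ (which, by Proposition \ref{prop:holoQuotients}, does not increase when passing to submodules) with the induced filtration $V^i=\tau^{-i}(0)$. I would first observe that, by Proposition \ref{prop:holoQuotients}, for any submodule $W\subseteq V$ we have $\dim W^1\le\dim V^1$, so the integer $\dim W^1$ is a monotone (non-increasing) invariant along a descending chain $V_1\supseteq V_2\supseteq\cdots$. Hence after finitely many steps $\dim (V_n)^1$ is constant, and we may assume $\dim(V_n)^1=d$ for all $n$ from the start; the claim will then follow if we can bound the length of such a chain in terms of $d$ and the ``starting point'' $V_1$.

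The key reduction is to pass to the finite-dimensional pieces of the filtration. For each $i$, the spaces $(V_n)^i = (V_n)\cap V^i$ form, for fixed $i$, a descending chain of finite-dimensional vector spaces (finite-dimensional because $V^i$ itself has dimension at most $i\cdot\dim V^1$ by the inequality $\dim V^i\le\dim V^{i-1}+\dim V^1$ noted before Proposition \ref{prop:holoQuotients}). A descending chain of finite-dimensional subspaces stabilizes, so for each fixed $i$ the chain $(V_n)^i$ is eventually constant. The remaining point is to upgrade ``eventually constant for each $i$'' to ``eventually constant uniformly in $i$,'' so that the whole chain $V_n=\bigcup_i (V_n)^i$ stabilizes. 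For this I would use Lemma \ref{lem:iotaFinHolo}(2): since $\dim(V_n)^1=d$ is constant, either $d=0$ (in which case $V_n=0$, trivial) or we track the integers $d_i(V_n)=\dim (V_n)^i/\tau(V_n)^{i+1}$, which are non-decreasing in $i$, bounded above by $d$, and satisfy: if $d_i(V_n)=d$ then $V_n$ is finite-dimensional. So either some $V_n$ is finite-dimensional — and then the chain below it consists of subspaces of a finite-dimensional module, hence stabilizes — or for every $n$ the sequence $d_i(V_n)$ stays strictly below $d$, which together with monotonicity in $i$ pins down the ranks of the filtration steps $\dim(V_n)^i$ from finitely many data, and one argues these can only decrease finitely often along the chain.

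Concretely, the cleanest route is probably: reduce to the case where $V$ itself is finite-dimensional or prove that an infinite strictly descending chain would force $\dim(V_n)^1$ to strictly decrease infinitely often (impossible) by showing each $V_n$ is generated, as a $\C[z]\langle\tau\rangle$-module, by finitely many elements once $\dim(V_n)^1$ and its ``length profile'' are fixed — i.e.\ deduce Artinian from the fact that $\holoOrbit$ consists of modules of finite length. Indeed, $\dim V^1$ bounds the number of generators of $\tau^{-1}(0)$, and $\tau$-local-nilpotence together with holonomicity should let one bound the length of $V$ itself by a function of $\dim V^1$ (each composition factor is $\delta_{p+i}$ for some $i$, and $V^1$ controls how many such factors can appear), making $V$ of finite length and hence both Noetherian and Artinian.

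\textbf{Main obstacle.} The delicate step is the uniformity in $i$: a priori a module in $\holoOrbit$ need not be finite-dimensional (the filtration $V^i$ can be strictly increasing forever), so one cannot simply say ``$V$ is a finite-dimensional vector space, done.'' One must genuinely use that the descending chain cannot create new composition factors far out in the filtration without either increasing $\dim W^1$ (forbidden) or being forced into a finite-dimensional submodule. Pinning down this dichotomy — infinite descent forces strict decrease of a bounded non-negative integer invariant — is the crux; everything else is bookkeeping with the inequalities already established in Proposition \ref{prop:holoQuotients} and Lemma \ref{lem:iotaFinHolo}.
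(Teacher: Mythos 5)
Your second and third paragraphs are in the right spirit and essentially track what the paper does: look at the double array $a_j^i=\dim(V_j)^i$, which is bounded by $\dim V^1=d$ along the $j$-direction (Proposition \ref{prop:holoQuotients}), nondecreasing and concave in $i$ (this is exactly what falls out of the proof of Lemma \ref{lem:iotaFinHolo}), and argue that such an array must stabilize. But you stop short of actually proving stabilization: ``one argues these can only decrease finitely often along the chain'' is the entire content of the corollary, and the bookkeeping with the $d_i$'s that you introduce is never carried through. As written this is a plan, not a proof.

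More seriously, the route you then declare ``cleanest'' rests on a false assertion. Modules in $\holoOrbit$ need not have finite length, and $\dim V^1$ does not bound the length. Take $V=\bigoplus_{k\geq 0}\C e_k$ with $\tau e_k=e_{k-1}$ for $k\geq 1$, $\tau e_0=0$, and $z e_k=(p-k)e_k$. One checks the relation $\tau z=(z-1)\tau$ holds, $\tau$ is locally nilpotent, each $e_k$ is supported in $p+\Z$, and $\tau^{-1}(0)=\C e_0$ is one-dimensional, so $V\in\holoOrbit$. Yet the $\C[z]\langle\tau\rangle$-submodules of $V$ are precisely the spans of $\{e_0,\dots,e_{n-1}\}$ for $n\geq 0$ together with $V$ itself, so $V$ has infinite length (and is not Noetherian). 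It is Artinian, as the corollary asserts, but only because its proper submodules are finite-dimensional and totally ordered --- not because of finite length. So you cannot ``deduce Artinian from the fact that $\holoOrbit$ consists of modules of finite length''; that premise is false, and the corollary is precisely the statement that survives when finite length fails. The proof has to go through the concavity/monotonicity argument you sketched, and the missing step is to actually show that a pointwise-nonincreasing chain of nonnegative, nondecreasing, concave integer sequences with uniformly bounded first increment, arising from a chain of submodules, stabilizes.
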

\begin{proof}
Given a decreasing sequence $V_j\subset V$, we consider for very $j$ the sequence. $(a_j^i)_{i\in \N}=(\dim (V_j)^i)_{i\in \N}$. The proof of the previous lemma shows that $a_j^i$ is nondecreasing and concave when $i$ increases and $j$ is fixed: $a_j^{i+1}-a_j^i = a_j^1-d_i$ is nonincreasing. As $i$ is fixed and $j$ varies $a_j^i$ it nonincreasing.

Let us show that for such a sequence of nonnegative integers if $a_j^{i+1}\ge a_j^{i}$, $a_{j+1}^i\le a_j^i$ and $a_j^{i+1}-a_j^{i}\le a_j^i-a_j^{i-1}$, then there is some big enough $N$ such that if $j\ge N$, $a_j^i=a_{N}^i$ for all $i$. Consider the quantity $k_j=\min_i a_j^{i+1}-a_j^{i}$. Both $k_j$ and $a_j^0$ are nonincreasing, so there is some $N$ for which $k_j$ and $a_j^0$ are constant if $j\ge 0$. Let us forget about $a_j^i$ for $j<N$, since we only care about the eventual stabilization. Consider now $b_j^i=a_j^i-a_j^0-ik_j$. We have that
\[
\begin{array}{cc}
b_j^i \ge 0 ;& b_j^{i+1}-b_j^i = a_j^{i+1}-a_j^i -k_j\ge 0;\\
b_{j+1}^i\le b_j^i; & b_j^{i+1}-2b_j^i+b_j^{i-1} = a_j^{i+1}-2a_j^i+a_j^{i-1} \le 0.
\end{array}
\]
So the new sequence $(b_j^i)$ keeps the same properties, $b_j^0=0$ and when $j$ is fixed, $b_j^i$ is eventually constant. Now, $m_j=\max_i b_j^i$ is also nonincreasing, so it is eventually constant as well. As before, let us ignore the small enough $j$'s so that $m_j$ is not the minimum value, so we have that $b_j^i\le m$.

Consider the sequence $(c^i)=(\min_j b_j^i)$, and notice that $(c^i)$ is also positive, nondecreasing, concave and bounded above by $m$. Let $i_0$ be such that $c^{i_0}=c^{i_0+1}$. There is some $j$ for which $c^{i_0}=b_j^{i_0}=b_j^{i_0+1}$, so $c^{i_0}=m$. Therefore, we have that $b_j^i=m$ for every $i\ge i_0$. There are only finitely many functions $[0,i_0]\to [0,m]$, so the sequence eventually stabilizes as desired.

\end{proof}

\subsubsection{Limits of difference modules with support on an orbit}

We are particularly interested in the category $\pholoOrbit$. A way to state some of the properties of these modules is by using the natural topology on them. Recall the definition of the $\tau$-adic topology.

\begin{definition}
Let $M$ be a $\C[\tau]$-module. The \textbf{$\tau$-adic topology} on $M$ is defined as follows: a subspace $U\subset M$ is open if for every finitely generated submodule $N\subseteq M$, there is a $k$ such that $\tau^kN\subseteq U$. Open subspaces form a basis of neighborhoods of $0$.
\end{definition}
\begin{remark}
A $\C[[\tau]]\langle z\rangle$-module $M$ is in $\pmodOrbit$ if and only if there is a basis $\{U\}$ of open $\C[[\tau]]\langle z\rangle$-modules such that for every $U$ and every $s\in M/U$, there is a polynomial $P(z)$ such that $P(z)s=0$ and the roots of $P$ are contained in $p+\Z$. This is due to the fact that on a $\C[[\tau]]$-module the $\tau$-adic topology is always Hausdorff and complete, so $M\to \displaystyle\lim_{L\text{ open}} M/L$ is always an isomorphism.
\end{remark}


\begin{proposition}\label{prop:proHoloOverFin}
For a module $M\in \C[[\tau]]\langle z\rangle$, the following are equivalent:
\begin{enumerate}
\item $M\in \pholoOrbit$.
\item $M$ contains an open $\C[[\tau]]\langle z\rangle$-submodule $N$ that is finitely generated over $\C[[\tau]]$ and such that $M/N\in \holoOrbit$. Moreover, $z-c$ acts as a unit on $M$ for any $c\notin p+\Z$.
\item There is a basis $\{N_i\}$ of open neighborhoods of $0$ which are $\C[[\tau]]\langle z\rangle$-submodules such that $M/N_i\in \holoOrbit$ and $\dim \tau^{-1}N_i/N_i$ is bounded.
\item There is a basis $\{N_i\}$ of open neighborhoods of $0$ which are $\C[[\tau]]\langle z\rangle$-submodules that are finitely generated over $\C[[\tau]]$ and such that $M/N_i\in \holoOrbit$.

\end{enumerate}
\end{proposition}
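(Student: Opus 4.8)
The plan is to prove the cycle of implications $(2)\Rightarrow(4)\Rightarrow(3)\Rightarrow(1)\Rightarrow(2)$, using the results already established about $\holoOrbit$ (Propositions \ref{prop:holoQuotients}, Lemma \ref{lem:iotaFinHolo}) and the remark characterizing $\pmodOrbit$ via the $\tau$-adic topology. The key technical input throughout is that on a $\C[[\tau]]$-module the $\tau$-adic topology is complete and Hausdorff, so that any $M\in\pmodOrbit$ is automatically the inverse limit of its quotients $M/N$ by open $\C[[\tau]]\langle z\rangle$-submodules with $M/N\in\modOrbit$, and finiteness of $\tau^{-1}(0)$ in such a quotient is what ``holonomic'' means for the quotient.

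First I would do $(1)\Rightarrow(2)$: by definition of $\pholoOrbit$ there is an open finitely generated $\C[[\tau]]$-submodule $U$ with $\tau^{-1}(U)/U$ finite dimensional; I would replace $U$ by the $\C[[\tau]]\langle z\rangle$-submodule it generates, which is still finitely generated over $\C[[\tau]]$ since $z$ only moves things within a bounded range of the orbit (using that $z-c$ is invertible off $p+\Z$, which itself follows because $M$ is a limit of modules in $\modOrbit$ where this holds), and check that $M/N\in\holoOrbit$ using Proposition \ref{prop:holoQuotients}. For $(2)\Rightarrow(4)$: given the single good submodule $N$, the submodules $N_i=\tau^i N$ (or rather the $\C[[\tau]]\langle z\rangle$-modules they generate) form a neighborhood basis of $0$; each is finitely generated over $\C[[\tau]]$, and $M/N_i$ sits in an extension built from copies of $M/N$ and the finite-dimensional pieces $N/\tau N$, hence lies in $\holoOrbit$ by the closure of $\holoOrbit$ under extensions. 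The implication $(4)\Rightarrow(3)$ is the easy direction: a finitely generated $\C[[\tau]]$-submodule $N_i$ with $M/N_i\in\holoOrbit$ automatically has $\dim\tau^{-1}N_i/N_i$ finite, and the uniform bound comes from Lemma \ref{lem:iotaFinHolo} together with the fact that all the $M/N_i$ are quotients of one another, so their invariants $\dim(M/N_i)^1$ are controlled. Finally $(3)\Rightarrow(1)$: from a neighborhood basis $N_i$ with $\dim\tau^{-1}N_i/N_i$ uniformly bounded I would extract a single $N_i$ that is finitely generated over $\C[[\tau]]$ — this is where the bound is used, to guarantee that the lattices don't grow without control — and verify it witnesses the Tate-type condition.

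The main obstacle I expect is $(3)\Rightarrow(1)$, specifically passing from the uniform bound on $\dim\tau^{-1}N_i/N_i$ to the existence of an open submodule that is actually finitely generated over $\C[[\tau]]$. The issue is that an open $\C[[\tau]]\langle z\rangle$-submodule need not be finitely generated over $\C[[\tau]]$ a priori, and one must use the bound plus completeness to produce a finitely generated one cofinal with it; I would argue by choosing finitely many elements whose images generate $\tau^{-1}N_i/N_i$ and the finitely many $z$-torsion generators of $(M/N_i)^1$, lifting them, and showing the $\C[[\tau]]$-submodule they generate is open and has holonomic quotient, invoking Proposition \ref{prop:holoQuotients} and an Artinian/stabilization argument like the one in the corollary after Lemma \ref{lem:iotaFinHolo}. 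A secondary subtlety, appearing in $(1)\Rightarrow(2)$, is confirming that enlarging a finitely generated $\C[[\tau]]$-submodule to a $\C[[\tau]]\langle z\rangle$-submodule keeps it finitely generated over $\C[[\tau]]$; this rests on the fact that modules in the picture are supported on the single orbit $p+\Z$ and $z$ acts in a ``banded'' fashion, so only finitely many translates are needed.
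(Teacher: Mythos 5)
Your overall strategy of running the cycle in the opposite order $(2)\Rightarrow(4)\Rightarrow(3)\Rightarrow(1)\Rightarrow(2)$ is reasonable, but several of your individual steps have gaps that correspond precisely to the places where the paper works hardest.

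First, in $(1)\Rightarrow(2)$ you propose to \emph{enlarge} the Tate lattice $U$ to the $\C[[\tau]]\langle z\rangle$-submodule it generates and then argue that this stays finitely generated over $\C[[\tau]]$ because $z$ acts in a ``banded'' way. That reasoning is not justified: $z$ does not act nilpotently on $M$, only nilpotently modulo each open submodule, so there is no a priori control on $\C[z]\cdot U$. The paper goes the other way: since $M\in\pmodOrbit$, one may choose an open $\C[[\tau]]\langle z\rangle$-submodule $N\subseteq U$ with $M/N\in\modOrbit$; then $N$ is automatically finitely generated over $\C[[\tau]]$ because $\C[[\tau]]$ is Noetherian and $U$ is, and the finiteness of $\tau^{-1}(N)/N$ follows from a two-term estimate comparing it to $\tau^{-1}(U)/U$ and the finite module $U/N$. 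Shrinking is essentially free; enlarging needs a real argument you do not supply.

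Second, in $(4)\Rightarrow(3)$ your reason for the uniform bound on $\dim\tau^{-1}N_i/N_i$ is that the $M/N_i$ ``are quotients of one another, so their invariants $\dim(M/N_i)^1$ are controlled.'' But the monotonicity runs the wrong way: by Proposition \ref{prop:holoQuotients}, passing to a quotient can only decrease $\dim(\cdot)^1$, so as $N_i$ shrinks the quantity $\dim(M/N_i)^1=\dim\tau^{-1}N_i/N_i$ is nondecreasing, and you get no bound from this. The implication is still true, but the correct source of the bound is the finite generation of a fixed $N$ over $\C[[\tau]]$: for $N_i\subseteq N$ one has $\dim\tau^{-1}N_i/N_i \le \dim N/(N_i+\tau N) + \dim\tau^{-1}N/N \le \dim N/\tau N + \dim\tau^{-1}N/N$, which is finite and independent of $i$. (This is also precisely the comparison the paper uses in its $(2)\Rightarrow(3)$.)

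Third, and most seriously, the core difficulty is your $(3)\Rightarrow(1)$, the step where one must extract from a basis of open $\C[[\tau]]\langle z\rangle$-submodules with bounded $\dim\tau^{-1}N_i/N_i$ a single open submodule that is actually finitely generated over $\C[[\tau]]$. You correctly identify this as the obstacle, but your sketch — lift finitely many generators and invoke an Artinian/stabilization argument — does not address the real problem: nothing forces the $\C[[\tau]]$-submodule generated by a finite lift to be \emph{open}, nor to have holonomic quotient. The paper's argument here (its $(3)\Rightarrow(4)$) is genuinely delicate: it chooses $N$ maximizing $d=\dim\tau^{-1}(N)/N$ among basis elements, uses a $\operatorname{Tor}$ exact sequence for each $N_i\subseteq N$ to deduce from maximality that $\dim\pi_iN/\tau\pi_iN\ge\dim(\pi_iN)^1$, feeds that into Lemma \ref{lem:iotaFinHolo}(2) to conclude each $\pi_iN$ is \emph{finite-dimensional}, then applies Nakayama and a Mittag--Leffler argument to show $N=\varprojlim\pi_iN$ is finitely generated. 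Without some version of the maximality-plus-Tor-plus-Mittag--Leffler chain, the key conclusion is not obtained, and your plan does not contain a substitute.

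In short, the cycle direction you chose is fine and $(2)\Rightarrow(4)$ is sound (the $\tau^iN$ are automatically $\C[[\tau]]\langle z\rangle$-submodules and f.g. over $\C[[\tau]]$, and the paper's $(2)\Rightarrow(3)$ computation adapts directly), but $(1)\Rightarrow(2)$, $(4)\Rightarrow(3)$, and especially $(3)\Rightarrow(1)$ need to be reworked along the lines above.
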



\begin{proof}\ 
\begin{enumerate}
\item[$1\Rightarrow 2$] Let $U\subseteq M$ be a witness to $M$ being a space of Tate type, i.e. $U$ is a finitely generated $\C[[\tau]]$-module and $\tau^{1}(U)/U$ is finite dimensional. By the assumption that $M\in \pmodOrbit$, $M$ has a particular basis of open submodules. We may choose an $N\subseteq U$ that is an open $\C[[\tau]]\langle z\rangle$-submodule, and $M/N\in \modOrbit$. In fact, $M/N\in \holoOrbit$: as a $\C[[\tau]]$-module, $M/N$ is an extension of $M/U$ by $U/N$, and there is an exact sequence
\[
0\longrightarrow \frac{\tau^{-1}(N)\cap U}{N}\longrightarrow \frac{\tau^{-1}(N)}{N}\longrightarrow \frac{\tau^{-1}(U)}{U}
.\]
The last term in the sequence is finite dimensional by assumption. The first one is contained in the torsion finitely generated $\C[[\tau]]$-module $U/N$, and therefore it is also finite dimensional. This shows that $M/N\in \holoOrbit$. Since $N\subseteq U$, $N$ is finitely generated over $\C[[\tau]]$.

Lastly, if $c\notin p+\Z$, $z-c$ acts as a unit because $M$ is an inverse limit of modules in $\modOrbit$, on each of which $z-c$ acts as a unit.

\item[$2\Rightarrow 3$] Let $N\subseteq M$ be the submodule in the assumption. Then we claim that $\{\tau^iN\}_{i\ge 0}$ is the required basis. It is easy to check that they are $\C[[\tau]]\langle z\rangle$-submodules, given that $N$ is one. Let us see that $M/\tau^iN\in \holoOrbit$. For each $i$, consider the exact sequence
\[
0\longrightarrow 
\frac{N\cap \tau^{-1}\tau^iN}{\tau^iN}\longrightarrow \frac{\tau^{-1}\tau^iN}{\tau^i N}\longrightarrow \frac{\tau^{-1}N}{N}
.\]
The dimension of $\frac{N\cap \tau^{-1}\tau^iN}{\tau^iN}$ is bounded above. This is true because $N$ is a finitely generated $\C[[\tau]]$-module and this claim can be checked by writing $N$ as a direct sum of a finite module and a free module. Therefore, %
$\dim \tau^{-1}\tau^iN/\tau^i N$ is a bounded number. Let us show now that the $\tau^iN$ form a basis of open sets. They are indeed open since $N$ is. If $U$ is any other open subspace, then the fact that $N$ is finitely generated combined with the definition of an open set shows that $\tau^iN\subseteq U$ for a big enough $i$.

Lastly, let us show that $M/\tau^iN\in \holoOrbit$ for every $i$. It only remains to show that all its elements are supported on $p+\Z$. We consider the short exact sequence
\[
0\longrightarrow\frac{N}{\tau^iN}\longrightarrow \frac{M}{\tau^iN}\longrightarrow \frac{M}{N}\longrightarrow 0
.\]
The first term in the sequence is finite dimensional, so the support of its elements is in $p+\Z$, since $z-c$ acts as a unit on it for $c\notin p+\Z$. Therefore, $\supp M/\tau^iN\subseteq \supp N/\tau^iN\cup \supp M/N\subseteq p+\Z$.

\item[$3\Rightarrow 4$] Let $B=\{N_i\}$ be the basis in the statement. We must find a basis of finitely generated $\C[[\tau]]$-modules with the required properties. It will be a subset of $B$, namely we will choose a fixed $N\in B$, and the new basis will consist of the elements of $B$ contained in $N$. By Proposition \ref{prop:holoQuotients}, the quantity $\dim \tau^{-1}(N_i)/N_i$ is nondecreasing as $N_i\in B$ gets smaller. Let $N\in B$ be such that $\dim \tau^{-1}(N)/N =d$ is the maximum among elements $N_i\in B$, so in particular if $N_i\subseteq N$, $\dim \tau^{-1}N_i/N_i = d$. We will show that $N$ is finitely generated over $\C[[\tau]]$. From here, it follows that $B_N=\{N_i\in B:N_i\subseteq N\}$ is the required basis.

Let $N_i\in B_N$ and let $\pi_i:M\to M/N_i$ be the projection. Consider the short exact sequence: $
0\to \pi_iN \to \pi_iM \to \frac{M}{N} \to 0
$. Now, $V^1=\operatorname{Tor}_1^{\C[[\tau]]}(\C[[\tau]]/\tau\C[[\tau]],V)$, which can be seen using the free resolution $\C[[\tau]]\overset{\tau}\to \C[[\tau]]$. The sequence above induces the following Tor exact sequence of finite dimensional vector spaces
\[
0\longrightarrow \left(\pi_iN\right)^1 \longrightarrow \left(\pi_iM\right)^1 \longrightarrow \left(\frac{M}{N}\right)^1 \longrightarrow \frac{\pi_iN}{\tau\pi_iN}
.\]
The assumption that $\dim \tau^{-1}N/N$ is maximal implies that the dimension of the two spaces in the middle is equal, which in turn implies that $\dim \frac{\pi_iN}{\tau\pi_iN}\ge \dim (\pi_iN)^1$. By Lemma \ref{lem:iotaFinHolo} this implies that $\pi_iN$ is finite dimensional, and by Nakayama's lemma it is generated by any system of generators for $\pi_iN/\tau\pi_iN$. Further, $\dim \pi_iN/\tau\pi_iN\le \dim (\pi_iN)^1\le \dim (\pi_i M)^1 = d$.

Consider now the short exact sequences
\[
0\longrightarrow \frac{\tau^{-1}(N_i)\cap N}{N_i}\longrightarrow \pi_iN\overset{\tau}\longrightarrow \pi_iN\longrightarrow \frac{\pi_iN}{\tau\pi_iN}\longrightarrow 0
.\]
We claim that the inverse limit of these sequences as $i\to \infty$ is also exact. We can check the Mittag-Leffler conditions and then apply the results on exactness of inverse limits (\cite[\href{http://stacks.math.columbia.edu/tag/0598}{Tag 0598}]{stacks-project}). Splitting the exact sequence into two short exact sequences, we have that the Mittag-Leffler conditions hold because the spaces $\tau^{-1}(N_i)/N_i$ are finite dimensional, and because the maps $\tau \pi_iN\to \tau \pi_{i'}N$ are surjections, respectively. Therefore, the limit of the sequences is the exact sequence
\[
0\longrightarrow \tau^{-1}(0)\cap N\longrightarrow N\overset{\tau}\longrightarrow N\longrightarrow \frac{N}{\tau N}\longrightarrow 0
.\]
In particular, $N/\tau N = \lim_{\gets}  \pi_iN/\tau\pi_iN$. On the right-hand side we have an inverse limit of surjections of finite dimensional vector spaces of dimension at most $d$, so all the maps are eventually isomorphisms. Lifting any given basis for $N/\tau N$ will generate all the modules $\pi_iN/\tau\pi_iN$, so by Nakayama's lemma it will generate all the $\pi_iN$'s (we can apply the Lemma since these are finitely dimensional), and therefore it will generate $N$.

This shows that there is an element $N$ in the basis $B$ which is finitely generated over $\C[[\tau]]$. Therefore, the basis $B_N=\{N_i\in B:N_i\subseteq N\}$ satisfies the required properties.

\item[$4\Rightarrow 1$] This is clear.

\end{enumerate}

\end{proof}

\begin{corollary}\label{cor:proHoloOverFin}
Let $M\in \pholoOrbit$, and let $U\subseteq M$ be a open finitely generated $\C[[\tau]]$-module (which is guaranteed to exist by the definition). Then $U$ contains an open $\C[[\tau]]\langle z\rangle$-module $L$, such that
\begin{enumerate}[a)]
\item $L$ is finitely generated over $\C[[\tau]]$.
\item $\{\tau^i L\}_{i\ge 0}$ is a basis of neighborhoods of $0$.
\item $M/\tau^i L\in \holoOrbit$.
\item $\dim \frac{\tau^{-1}\tau^iL}{\tau^i L}=d$ is constant.
\item For any open $\C[[\tau]]\langle z\rangle$-submodule $N\subseteq M$, $M/N\in \holoOrbit$ and $\dim \tau^{-1}N/N\le d$.
\end{enumerate}
\end{corollary}
\begin{proof}
Let $B$ be a basis as in Proposition~\ref{prop:proHoloOverFin}, part 3), and choose $L\in B$ contained in $U$, such that $d=\dim \tau^{-1}L/L$ is the maximum in $B$. Let $L$ be chosen so that $\tau^{-1}L/L$ is the maximum among the basis provided by the Proposition. Since $L$ is contained in $U$, it satisfies a). Part b) is true for any open finitely generated $\C[[\tau]]$-module. For every $i$, let $N_i\in B$ be contained in $\tau^iL$. Then $M/\tau^iL$ is a quotient of $M/N_i\in \holoOrbit$, so we have part c). Finally, Proposition~\ref{prop:holoQuotients} ensures that $\dim \tau^{-1}\tau^iL/\tau^iL\ge \dim \tau^{-1}L/L$, and $\dim \tau^{-1}\tau^iL/\tau^iL\le \dim \tau^{-1}N_i/N_i\le d$, so we have part d). Finally, if $N$ is such a submodule, by the basis property there is some $\tau^iL$ contained in $N$. Again Proposition~\ref{prop:holoQuotients} ensures that $\dim \tau^{-1}N/N\le \dim  \tau^{-1}\tau^iL/\tau^iL =d$, so e) is satisfied.
\end{proof}

\begin{corollary}Both $\pmodOrbit$ and $\pholoOrbit$ are abelian categories.
\end{corollary}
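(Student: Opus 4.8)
\emph{Plan of proof.} Both $\pmodOrbit$ and $\pholoOrbit$ are full subcategories of the abelian category $\Mod(\C[[\tau]]\langle z\rangle)$, so I would prove abelianness by exhibiting, for each of them, finite direct sums, kernels and cokernels, and by checking that the canonical map from the coimage to the image of any morphism is an isomorphism. The only subtlety is that the cokernel of $f\colon M\to N$ computed inside $\pmodOrbit$ is not $N/f(M)$ but the quotient $N/\overline{f(M)}$ by the closure of $f(M)$ in the linear topology on $N$ whose basis of open submodules is $\LL(N)$. Conceptually all of this amounts to identifying $\pmodOrbit$ with the pro-category $\Pro\modOrbit$ via $M\mapsto(M/L)_{L\in\LL(M)}$, and $\pholoOrbit$ with the full subcategory of $\Pro\holoOrbit$ cut out by the boundedness condition of Proposition \ref{prop:proHoloOverFin}(3), and then invoking the standard fact that the pro-category of an abelian category is abelian; but it is cleaner to record the module-theoretic verifications directly.

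The single input used everywhere is that $\modOrbit$ and $\holoOrbit$ are themselves abelian: by Proposition \ref{prop:holoQuotients} each is closed under submodules, quotients and extensions inside $\Mod(\C[z]\langle\tau\rangle)$, hence is a Serre (in particular abelian) subcategory. I would exploit the ensuing \emph{factorization property}: if $M\in\pmodOrbit$ and $\phi\colon M\to V$ is $\C[[\tau]]\langle z\rangle$-linear with $V\in\modOrbit$, then $\ker\phi\in\LL(M)$, because $M/\ker\phi$ embeds in $V$ and $\modOrbit$ is closed under submodules, so $\phi$ factors through $M/\ker\phi$. Consequently every such $\phi$ is continuous, kernels of morphisms in $\pmodOrbit$ are closed submodules (being intersections of preimages of open submodules), and $\Hom(M,V)=\varinjlim_{L\in\LL(M)}\Hom(M/L,V)$ for all $V\in\modOrbit$; this last identity, together with $N=\varprojlim_{L'\in\LL(N)}N/L'$, is exactly what makes $M\mapsto(M/L)_{L\in\LL(M)}$ fully faithful into $\Pro\modOrbit$.

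Then I would check the pieces. \emph{Direct sums}: $\{L\oplus L' : L\in\LL(M),\,L'\in\LL(M')\}$ is cofinal in $\LL(M\oplus M')$, so $M\oplus M'=\varprojlim(M/L\oplus M'/L')\in\pmodOrbit$, and if $U,U'$ are Tate lattices then $U\oplus U'$ is one for $M\oplus M'$. \emph{Kernels}: for $f\colon M\to N$ let $K=\ker f$ in $\Mod(\C[[\tau]]\langle z\rangle)$; applying the factorization property to the maps $M\to N/L'$ shows $K$ is closed in $M$, that $\{K\cap L : L\in\LL(M)\}$ is cofinal in $\LL(K)$, and (using $\bigcap_{L'\in\LL(N)}L'=0$) that $K=\varprojlim_{L_0\in\LL(K)}K/L_0\in\pmodOrbit$; moreover, if $U$ is a Tate lattice of $M$ then $U\cap K$ is finitely generated over the noetherian ring $\C[[\tau]]$, is open in $K$, and $\tau^{-1}(U\cap K)/(U\cap K)$ injects into $\tau^{-1}(U)/U$, so $K\in\pholoOrbit$ when $M$ is. \emph{Cokernels}: put $\overline{f(M)}=\bigcap\{L'\in\LL(N) : L'\supseteq f(M)\}$, a closed submodule; then $N/\overline{f(M)}$ is the quotient of a complete module by a closed submodule and lies in $\pmodOrbit$ (its $\modOrbit$-quotients are indexed by $\{L'\in\LL(N):L'\supseteq\overline{f(M)}\}$, whose intersection is $\overline{f(M)}$), and it is the cokernel in $\pmodOrbit$ because any morphism $N\to P$ with $P\in\pmodOrbit$ killing $f(M)$ has closed kernel and hence kills $\overline{f(M)}$; holonomicity descends to $N/\overline{f(M)}$ by a short computation with the Tate lattices of $N$ and of $\overline{f(M)}\cong\operatorname{coim}f$, using that $\overline{f(M)}/\tau\overline{f(M)}$ is finite dimensional (Lemma \ref{lem:iotaFinHolo}). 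Finally $\operatorname{coim}f=M/\ker f\cong f(M)$ and $\operatorname{im}f=\overline{f(M)}$, so the coimage–image map is an isomorphism precisely when $f(M)$ is closed in $N$.

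The main obstacle is exactly this last point together with the cofinality assertions used in the kernel and cokernel steps: one must know that a submodule of a $\pmodOrbit$-object carrying a compatible $\modOrbit$-quotient structure is again complete, that quotients by closed submodules stay complete, and that images of morphisms are closed. These hold automatically once $\pmodOrbit$ is identified with a pro-category, so the honest way to dispatch them is the pro-category route: establish the equivalence $\pmodOrbit\simeq\Pro\modOrbit$ — the delicate part being essential surjectivity, which needs the normalization of a pro-object to one with surjective transition maps, applied to the systems $(M/L)_{L\in\LL(M)}$ — and then cite that $\Pro$ of an abelian category is abelian. The category $\pholoOrbit$ is then handled by checking, via the characterizations of Proposition \ref{prop:proHoloOverFin}, that it is a Serre subcategory of $\pmodOrbit$.
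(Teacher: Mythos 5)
The paper itself states this corollary without proof, so there is nothing to compare your argument against directly; what I can do is assess whether your plan actually closes.

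The overall architecture is reasonable — use Proposition \ref{prop:holoQuotients} to see that $\modOrbit$ and $\holoOrbit$ are abelian (indeed Serre subcategories of $\Mod(\C[z]\langle\tau\rangle)$), use the factorization property to control $\Hom$'s into $\modOrbit$, and then bootstrap to the completed categories. But as written the argument is circular at exactly the point you flag as "the main obstacle." In the direct module-theoretic route, abelianness reduces to showing that for $f\colon M\to N$ in $\pmodOrbit$ the set-theoretic image $f(M)$ is closed in $N$; you observe this but do not prove it. In the pro-category route you say these difficulties "hold automatically once $\pmodOrbit$ is identified with a pro-category," but you also say that establishing the equivalence $\pmodOrbit\simeq\Pro\modOrbit$ has essential surjectivity as "the delicate part," which you do not carry out. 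Essential surjectivity is genuinely nontrivial here: taking the topological realization $\varprojlim V_i$ of a pro-object and re-expanding it via $\LL$ can change the pro-isomorphism class unless one has a Mittag–Leffler or countability hypothesis ensuring that $\varprojlim V_i\to V_i$ stays surjective; the paper's arguments elsewhere (e.g.\ in the proof of Proposition \ref{prop:proHoloOverFin}) lean on exactly such Mittag–Leffler inputs, so this is not a triviality one can wave away. Moreover, for abelianness of $\pmodOrbit$ you do not actually need the full equivalence with $\Pro\modOrbit$ — it would suffice to show the fully faithful embedding $M\mapsto(M/L)_{L\in\LL(M)}$ has image closed under kernels and cokernels of $\Pro\modOrbit$ — and that is a weaker, more checkable statement than essential surjectivity, but you check neither.

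Two further points. First, the assertion that $\pholoOrbit$ is a Serre subcategory of $\pmodOrbit$ is stated but not verified; closure under quotients in particular requires control of $\tau^{-1}(\overline U)/\overline U$ for a Tate lattice $U$, which needs an argument of the type carried out in the paper's proof of Proposition \ref{prop:proHoloOverFin} (2$\Rightarrow$3), not just a citation. Second, and more optimistically, the closed-image problem is likely tractable for $\pholoOrbit$ using the finiteness you have available: $\C[[\tau]]$ is a complete Noetherian local ring, Tate lattices are finitely generated, submodules of finitely generated modules are $\tau$-adically closed, and $\holoOrbit$-quotients are Artinian (paper, corollary after Lemma \ref{lem:iotaFinHolo}) — so images between Tate modules should be closed by a Krull-intersection/Mittag–Leffler argument. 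For the uncompleted-size category $\pmodOrbit$, whose open quotients lie only in $\modOrbit$ and need not be Artinian, the closed-image claim is less clear and is the place where your proof, as a proof rather than a plan, is genuinely incomplete.
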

\begin{proof}
They are both full subcategories of the abelian category $\Mod(\C[[\tau]]\langle z\rangle)$, so it is enough to show that they are closed under quotients and submodules. Let us start with $\pmodOrbit$: consider a short exact sequence of $\C[[\tau]]\langle z\rangle$-modules $0\to N\to M\to M/N\to 0$, and suppose $M\in \pmodOrbit$. For any $L\subseteq M$ such that $M/L\in \modOrbit$, we have a short exact sequence $0\to N/(N\cap L)\to M/L \to M/(N+L)\to 0$, which shows that both the submodule and the quotient are in $\modOrbit$. Taking the limits of these short exact sequences shows that $N,P\in \pmodOrbit$ (note that in this case the limit is exact because it is a limit of surjections).

Now, suppose we have a short exact sequence $0\to L\to M\to M/L\to 0$ in $\pmodOrbit$ and suppose $M\in \pholoOrbit$. Choose a basis $\{N_i\}$ of neighborhoods of $0\in M$ as in Corollary~\ref{cor:proHoloOverFin}, part 4). Then $\{N_i\cap L\}$ and $\{N_i+L/L\}$ are bases for $L$ and $P$ respectively, and it is straightforward to check that they have property 4) in Proposition~\ref{prop:proHoloOverFin} as well.

\end{proof}

\begin{corollary}\label{cor:iotaShriekWellDefd}
If $M\in \pholoOrbit$, then $\iota^!_pM\in \finmod$.
\end{corollary}
\begin{proof}
We must show that $\iota_p^!M$ is finite dimensional. Consider a basis of submodules $N_i\subseteq M$ as in Corollary~\ref{cor:proHoloOverFin}. For every $i$, $M/N_i\in \holoOrbit$, and Lemma~\ref{lem:iotaFinHolo} implies that $\dim\iota_{p}^!(M/N_i)\le \dim \tau^{-1}N_i/N_i$ is finite dimensional and bounded. Since $M\cong \lim_{\gets} M/N_i$, we can see that $\iota^!_p M\subseteq \lim \iota^!_p M/N_i$, so it is finite dimensional as desired.

\end{proof}
\subsubsection{Proof of Proposition~\ref{prop:iotaEquiv}}\label{sec:proofofIotaEquiv}

Finally, we have all the tools to prove Proposition~\ref{prop:iotaEquiv}. Let us abbreviate $\iota_!=\iota_{p!}$ and $\iota^!=\iota_p^!$.

\begin{lemma}\label{lem:ExceptionalInvImgExact}
The functor $\iota^!:\pholoOrbit\to \finmod$ is exact.
\end{lemma}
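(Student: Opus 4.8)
The plan is to separate left exactness, which is formal, from right exactness, which carries all the content. For a short exact sequence $0\to A\xrightarrow{f}B\xrightarrow{g}C\to 0$ in $\pholoOrbit$, an element of $\iota^!_p B$ killed by $(z-p)^n$ and lying in $\ker g=f(A)$ comes from an element of $A$ killed by $(z-p)^n$, so $\iota^!_p A\to\iota^!_p B\to\iota^!_p C$ is always exact and I only need surjectivity of $\iota^!_p g$. My strategy there will be: given $c\in\iota^!_p C$ with $(z-p)^N c=0$, lift it to some $b_0\in B$, note that $a_0:=(z-p)^N b_0$ lies in $f(A)$, and correct $b_0$ by an element of $f(A)$ so that a power of $z-p$ kills it. That correction works thanks to the key structural fact I would isolate as a sublemma: \emph{for every $M\in\pholoOrbit$, the operator $z-p$ acts bijectively on $M/\iota^!_p M$, and some power $(z-p)^d$ annihilates $\iota^!_p M$.}

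For the sublemma, injectivity of $z-p$ on $M/\iota^!_p M$ is immediate ($(z-p)m\in\iota^!_p M$ forces $m\in\iota^!_p M$). For the rest I would present $M$ as an inverse limit: by Proposition \ref{prop:proHoloOverFin}(3) choose a basis $\{N_i\}$ of open $\C[[\tau]]\langle z\rangle$-submodules with $V_i:=M/N_i\in\holoOrbit$ and $\dim\tau^{-1}(N_i)/N_i\le d$ uniformly, so that $M=\lim_i V_i$ with surjective transition maps ($M$ being $\tau$-adically complete). Each $V\in\holoOrbit$ is $\C[z]$-torsion supported on $p+\Z$, hence splits into generalized $z$-eigenspaces $V=\bigoplus_{j\in\Z}V_{p+j}$; then $\iota^!_p V=V_p$, while on $V_{p+j}$ with $j\neq 0$ the operator $z-p$ is $j$ plus a locally nilpotent operator, hence invertible, so $z-p$ is invertible on $V/\iota^!_p V$. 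By Lemma \ref{lem:iotaFinHolo}(1), $\dim\iota^!_p V_i\le\dim(V_i)^1=\dim\tau^{-1}(N_i)/N_i\le d$, so $(z-p)^d$ kills each $\iota^!_p V_i$. The system $\{\iota^!_p V_i\}$ is Mittag--Leffler (finite-dimensional terms), so $\lim$ applied to $0\to\{\iota^!_p V_i\}\to\{V_i\}\to\{V_i/\iota^!_p V_i\}\to 0$ stays exact, yielding $0\to\lim_i\iota^!_p V_i\to M\to\lim_i(V_i/\iota^!_p V_i)\to 0$. Since $(z-p)^d$ kills $\lim_i\iota^!_p V_i$ it sits inside $\iota^!_p M$; the reverse inclusion is clear, so $\iota^!_p M=\lim_i\iota^!_p V_i$ is $(z-p)^d$-torsion (and finite-dimensional, whence $\iota^!_p M\in\finmod$), and $M/\iota^!_p M\cong\lim_i(V_i/\iota^!_p V_i)$ with $z-p$ acting as an inverse limit of isomorphisms, hence bijectively.

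To conclude, with $c,b_0,a_0$ as above: since $z-p$, hence $(z-p)^N$, is bijective on $A/\iota^!_p A$, pick $a_1\in A$ with $(z-p)^N a_1-a_0\in\iota^!_p A$ and set $b=b_0-f(a_1)$. Then $g(b)=c$ and $(z-p)^N b=f(a_0-(z-p)^N a_1)\in f(\iota^!_p A)$, which is killed by $(z-p)^d$, so $(z-p)^{N+d}b=0$, i.e. $b\in\iota^!_p B$ maps to $c$. This gives surjectivity of $\iota^!_p g$ and hence exactness of $\iota^!$.

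The step I expect to be the main obstacle is the sublemma, and within it the passage to the inverse limit: one needs the $\iota^!_p V_i$ to have \emph{uniformly bounded} dimension, so that a single power of $z-p$ annihilates $\lim_i\iota^!_p V_i$ and identifies it with $\iota^!_p M$ — precisely where the Tate/holonomicity conditions packaged in Proposition \ref{prop:proHoloOverFin} and the bound $\dim\iota^!_p V\le\dim V^1$ of Lemma \ref{lem:iotaFinHolo} are essential. (An alternative, as the paper remarks for Proposition \ref{prop:iotaEquiv}, would be to transport everything through the Mellin transform and use the classification of regular $D$-modules on the formal disk, but I would avoid that here to keep the argument internal to difference modules.)
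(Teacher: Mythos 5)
Your proof is correct, and it takes a genuinely different route from the paper for the substantive part (right exactness). The paper proceeds by functorial reduction: it first proves $\iota^!$ preserves surjections within $\holoOrbit$ (via the eigenspace decomposition), then establishes that $\iota^!$ commutes with the inverse limits presenting objects of $\pholoOrbit$, and finally deduces surjectivity for a map in $\pholoOrbit$ by writing it as an inverse limit of surjections in $\holoOrbit$ and invoking Mittag--Leffler. You instead isolate the structural sublemma that $z-p$ acts bijectively on $M/\iota^!_pM$ while a fixed power $(z-p)^d$ kills $\iota^!_pM$, and then prove surjectivity of $\iota^!_pg$ by a concrete element-level correction: lift $c$ to $b_0$, push the obstruction $(z-p)^Nb_0$ into $f(A)$, and adjust by $f(a_1)$ using the invertibility of $(z-p)^N$ on $A/\iota^!_pA$ so that $(z-p)^{N+d}$ kills the corrected lift. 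Both arguments rely on the same two ingredients --- the uniform dimension bound $\dim\iota^!_pV_i\le d$ from Proposition \ref{prop:proHoloOverFin} and Lemma \ref{lem:iotaFinHolo}, and a Mittag--Leffler limit argument --- but your packaging surfaces a useful fact in its own right (invertibility of $z-p$ modulo $\iota^!_pM$), whereas the paper's version more directly exposes the commutation $\iota^!\lim\cong\lim\iota^!$, which it then reuses later in the proof of Proposition \ref{prop:iotaEquiv}. Two minor points worth tightening if you were to write this out: (i) when you write ``pick $a_1\in A$ with $(z-p)^Na_1-a_0\in\iota^!_pA$'', you mean the unique preimage $f^{-1}(a_0)\in A$ rather than $a_0\in B$ itself; (ii) the finite-dimensionality of $\iota^!_pM=\lim_i\iota^!_pV_i$ deserves a word --- for a Mittag--Leffler system of vector spaces of dimension uniformly $\le d$, the stable images eventually form a tower of isomorphisms, so the limit has dimension $\le d$.
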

Using this lemma, we will now prove the proposition. First of all, it is straightforward to check that $\iota^!\iota_!\cong \Id$, so it remains to show that $\iota_!\iota^! \cong \Id$. There is a natural map $\eta:\iota_!\iota^!M\longrightarrow M$. Let us show that $\eta$ is an isomorphism. By the exactness of $\iota^!$, $\iota^!(\operatorname{co})\ker \eta \cong (\operatorname{co})\ker \iota^! \eta$. However, the isomorphism $\iota^!\iota_!\cong \Id$ implies that $\iota^!\eta$ is an isomorphism, so $\iota^!\ker \eta=\iota^!\coker \eta=0$. It is therefore enough to show that $\iota^!P=0$ implies that $P=0$.

Let us show that $\iota^!P=0$ for $P\in \ppholoOrbit$ implies that $P=0$. Let $L\subseteq P$ be as in Corollary~\ref{cor:proHoloOverFin}. Since $\iota^!$ is exact, $\iota^!P$ surjects onto $\iota^!(P/L)$. If $\iota^!P=0$, then $\iota^!(P/L)=0$. If $P/L\neq 0$, then it has some nonzero element $m$ supported at some point $p-j$ for some $j$. Then $\tau^jm$ is a nonzero element of $\iota^!(P/\tau^jL)$, contradicting the assumption that $\iota^!P=0$ and therefore $\iota^!(P/\tau^jL)=0$. Therefore, $P/L=0$. This implies that $P=L$ is a $\C((\tau))$-vector space which is finitely generated over $\C[[\tau]]$, so indeed $P=0$.

\begin{proof}[Proof of Lemma \ref{lem:ExceptionalInvImgExact}]
In general, $\iota^!$ is left exact. In order to show that it is right exact as well, it will be enough to show that it maps surjections to surjections. Let us start by considering a surjection $f:M\to N$ in $\holoOrbit$. Since $M,N\in \holoOrbit$, we may write $M=\bigoplus_{i\in \Z} M_{p+i}$ and similarly for $N$. The morphism $f$ sends each component $M_{p+i}$ to $N_{p+i}$, so $\iota^!f$ becomes the map $\iota^!f:M_p\longrightarrow N_p$, which must necessarily be surjective.

Now let us consider the case where $M,N\in \pholoOrbit$. Suppose we have a basis $\{L\}$ of open $\C[[\tau]]\langle z\rangle$-submodules in $M$. First we need to prove that the following map is an isomorphism:
\[\iota^!M=
\iota^!\lim_\gets \frac{M}{L} \longrightarrow 
\lim_\gets\iota^! \frac{M}{L} 
.\]
It can be checked that it is injective. To see that it is surjective, a system of compatible elements $\{s_L\}$ on the right-hand side corresponds to an element $s$ of $M$, and we must show that this element is torsion. Since the $L$'s are open, Corollary~\ref{cor:proHoloOverFin} ensures $M/L\in \holoOrbit$ and that $\dim \tau^{-1}L/L\le d$ for some fixed $d$. Further, by Lemma \ref{lem:iotaFinHolo}, $\dim \iota^!M/L\le \dim \tau^{-1}L/L\le d$. In particular, $(z-p)^d\iota^!M/L=0$, so $s_L$ is annihilated by $(z-p)^d$ for every $L$. This implies that $(z-p)^ds=0$, so $s\in \iota^!M$ as we wished.

Suppose now that we have a surjection $f:M\longrightarrow N$ in $\pholoOrbit$. We must show that the corresponding map $\iota^!f:\iota^!M\to \iota^!N$ is surjective. Let $\{L\}$ be a basis of neighborhoods of $0$ for $M$ as above. Further, since $f$ is a surjection, it can be checked that $\{f(L)\}$ is a basis of neighborhoods of $0$ in $N$ with the same properties. Thus $\iota^!f$ can be seen as a map
\[
\iota^!f:\iota^!\lim_{\gets} \frac{M}{L}\longrightarrow \iota^! \lim_{\gets} \frac{N}{f(L)} 
.\]
By the discussion above, the map is isomorphic to
\[
\lim_\gets\iota^!f_L:\lim_{\gets} \iota^!\frac{M}{L}\longrightarrow  \lim_{\gets} \iota^!\frac{N}{f(L)}
.\]
Each of the maps in the limit is surjective. A sufficient condition for an inverse limit of surjective maps to be surjective is the arrows forming the limit being surjections themselves \cite[\href{http://stacks.math.columbia.edu/tag/0598}{Tag 0598}]{stacks-project}. This is the case, because we have already shown that $\iota^!$ is right exact when restricted to $\holoOrbit$. This shows that $\iota^!$ is exact.

\end{proof}

\subsubsection{The right adjoint to vanishing cycles}

\begin{proposition}\label{prop:adjoints}
The functors $
\Phi_{p+\Z}^l$ and $\iota_{p!}
$ are adjoints in the following sense: if $M\in \Hol(\ddA)$ and $N\in \finmod$, there is a natural isomorphism
\[
\Hom_{\C[\pi]}(\Phi_{p+\Z}^l M,N)\cong\Hom_{\ddA}(M,\iota_{p!}N).
\]
\end{proposition}
\begin{remark}
Technically, it is not true that $\Phi^l_{p+\Z}\vdash \iota_{p!}$ because the image of $\iota_{p!}$ is not made of holonomic modules. However, the statement above is enough for our purposes. Notice that it implies that $\Phi^l_{p+\Z}$ is determined by this adjunction.
\end{remark}

\begin{proof}[Proof of Proposition \ref{prop:adjoints}]
Let $M\in \Hol(\ddA)$, and let $N\in \finmod$, and let us abbreviate $\Phi=\Phi^l_{p+\Z}$ and $\iota_!=\iota_{p!}$. We must find a natural isomorphism
\[
 \Hom_{\C[\pi]}(\Phi M,N)\cong\Hom_{\C[z]\langle\tau,\tau^{-1}\rangle}(M,\iota_!N)
.\]
First of all, note that since $\tau$ acts as a unit on both $M$ and $\iota_!N$, the forgetful functor gives an isomorphism $
\Hom_{\C[z]\langle\tau,\tau^{-1}\rangle}(M,\iota_!N)\cong\Hom_{\C[z]\langle\tau\rangle}(M,\iota_!N)$.

Throughout this proof, we will denote $\frac{\C[\tau,\tau^{-1}]}{\tau^{n+1}\C[\tau]}=\tau^{n}\C[\tau^{-1}]$ for short. In other words, we have that
\[
\C((\tau)) = \lim_{\sstack{\gets}{n\to \infty}}\frac{\C[\tau,\tau^{-1}]}{\tau^{n+1}\C[\tau]}=\lim_{\gets}\tau^{n}\C[\tau^{-1}]
.\]
Where the projection maps $\tau^n\C[\tau^{-1}] \to \tau^{n-1}\C[\tau^{-1}]$ are implied. Using this notation, we will also abbreviate $\tau^{n}\C[\tau^{-1}]\otimes_{\C} N$ to $\tau^{n}\C[\tau^{-1}]N$. By the definition of limit, we have that
\[\Hom_{\C[z]\langle\tau\rangle}(M,\iota_!N)\cong \Hom_{\C[z]\langle\tau\rangle}\left(M,\lim_{\gets}\tau^{n}\C[\tau^{-1}]N\right)\cong 
\lim_{\gets} \Hom_{\C[z]\langle\tau\rangle}\left(M,\tau^{n}\C[\tau^{-1}]N\right)
.\]
Consider now one of the arrows in the right-hand side limit:
\[
\f{\pi}{\Hom\left(M,\tau^{n+1}\C[\tau^{-1}]N\right)}{\Hom\left(M,\tau^{n}\C[\tau^{-1}]N\right)}{f}{\pi(f)=f\mod \tau^n N.}
\]
The homomorphism $\pi$ has an inverse: $\pi^{-1}(f)=\tau \circ f\circ \tau^{-1}$, where $\tau$ is seen as the $\C$-linear isomorphism $\tau^{n}\C[\tau^{-1}]N\to \tau^{n+1}\C[\tau^{-1}]N $. One verifies that $\pi^{-1}(f)$ is indeed $\C[z]\langle \tau\rangle$-linear, and that $\pi$ and $\pi^{-1}$ are inverses. Therefore, $\lim_{\gets} \Hom\left(M,\tau^{n}\C[\tau^{-1}]N\right)$ is a limit of a system of isomorphisms, so it is isomorphic to any one of its terms:
\[
\Hom_{\C[z]\langle\tau,\tau^{-1}\rangle}(M,\iota_!N)\cong\lim_{\gets} \Hom_{\C[z]\langle\tau\rangle}\left(M,\tau^{n}\C[\tau^{-1}]N\right) \cong \Hom_{\C[z]\langle\tau\rangle}\left(M,\C[\tau^{-1}]N\right)
.\]
We can write a map $f:M\to \C[\tau^{-1}]N$ as $f(s)  = \sum_{i\ge 0} \tau^{-i}\phi_i(s)$, where $\{\phi_i\}$ is a collection of maps $M\to N$. The conditions of $f$ being $\C[z]\langle\tau\rangle$-linear and the image of $f$ landing in $\C[\tau^{-1}]N$ rather than in $\C[[\tau^{-1}]]N$ boil down to the following three conditions:
\[
\begin{array}{crcl}
\forall m\in M & \phi_i(m) &=& \phi_0(\tau^i m)\\
\forall m\in M &\phi_0(zm)&=&z\phi_0(m)\\
\forall m\in M\exists n\in \Z_{\ge 0}& 0&=&\phi_n(m)=\phi_0(\tau^n m ).
\end{array}
\]
The first two conditions imply that $f$ is determined by a $\C[z]$-linear map $\phi_0:M\to N$, i.e.
\[
\Hom_{\C[z]\langle\tau\rangle}\left(M,\C[\tau^{-1}]N\right) \cong \left\{
\phi\in \Hom_{\C[z]} \left(M,N\right):
\forall m\exists n,\phi(\tau^nm)=0
\right\}=:\Hom_{\C[z]}^{(\tau)} \left(M,N\right).
\]
To finish the proof, we will show that the maps in $\Hom_{\C[z]}^{(\tau)} \left(M,N\right)$ are precisely the maps that factor through the map $M\to \Phi M$. Let $\phi\in \Hom_{\C[z]}^{(\tau)} \left(M,N\right)$, and let $L\in \Gl(M)$. Since $L$ is finitely generated, it follows that for some big enough $n$, $\phi(\tau^nL)=0$. Therefore, the map $\phi_p:M_p\to (\C[\tau^{-1}]N)_p\cong N$ sends $(\tau^nL)_p$ to $0$, and therefore it factors (uniquely) through a map $
\wt\phi:\Phi M = M_p/(\tau^nL)_p \longrightarrow N$.

To go in the opposite direction, let $g:\Phi M\to N$ and consider the composition $\wt g=g\circ \pi$, where $\pi$ is the projection $M\to \Phi M$. Taking $L\in \Gl(M)$ containing a given $m$, $(\tau^n m)_p\in (\tau^nL)_p$, so if $n\gg 0$, $\pi m=0$. Therefore, $\wt g\in \Hom_{\C[z]}^{(\tau)} \left(M,N\right)$. Putting all the steps together, we have concluded the proof.
\end{proof}

\subsection{Local Mellin transform}\label{sec:MellinTransform}

%
%
%
%

\begin{definition}
Let $F\in \Hol(K_0)^{\reg,(p)}$. The \textbf{local Mellin transform} of $F$ is defined as
\[
\Mel F = \iota_p^! (\M (\jj_{0*} F)).
\]
Where $\jj_{0*}$ is the forgetful functor $\Hol(\Dz)\to \Hol(\D)$ (we are using \cite{A}'s notation here). The vector space $\M \jj_{0*}F$ equals $F$, together with an action of $\C((\tau))\langle z\rangle$ given by $\tau^{\pm 1}\mapsto x^{\pm 1}$ and $z \mapsto x\partial$ and in Proposition~\ref{MellinIsEquivalence} we show that $\M \jj_{0*}F\in \pholoOrbit$, so it makes sense to apply $\iota_p^!$ to it.
\end{definition}

\begin{remark}
By definition, the $x$-adic topology on $\jj_{0*}F$ coincides with the $\tau$-adic topology on\linebreak$\iota_{p!}\Mel (F)$, and this together with the condition
\[
\M(\jj_{0*} (F))\overset{\sim}\longrightarrow \iota_{p!} (\Mel (F)).
\]
determines $\Mel $. This follows from the fact that $\iota_p^!$ and $\iota_{p!}$ are mutual inverses, Proposition \ref{prop:iotaEquiv}.

\end{remark}

\begin{proposition}\label{MellinIsEquivalence}
The functor $\Mel$ induces an equivalence
\[
\Mel :\Hol(\Dz)^{\mathrm{reg},(p)}\overset{\sim}\longrightarrow \finmod.
\]
\end{proposition}
\begin{proof}
Using Proposition \ref{prop:iotaEquiv}, this will follow from showing that the following functor is an equivalence, since it remains to compose with $\iota^!_p$:
\[
\M\circ \jj_{0*}: \Hol(\Dz)^{\mathrm{reg},(p)}\overset{\sim}\longrightarrow \ppholoOrbit.
\]
First of all, we must check that the image of $\Hol(\Dz)^{\mathrm{reg},(p)}$ is indeed contained in $\ppholoOrbit$. Let $V\in \Hol(\Dz)^{\mathrm{reg},(p)}$. By definition of the leading coefficient and having regular singularities, we can find a lattice $L\subset V$ such that $(x\partial-p)^n L\subseteq xL$ for some big enough $n$. This implies that $\M \jj_{0*} V/L\in \modOrbit$: if we let $s\in V$, then for some $m$, $x^ms\in L$. Denoting $\M\jj_{0*}s = \wh s$ and $\M\jj_{0*}L = \wh L$, we have that
\[
\tau^m(z-p+m)^n\wh s=(z-p)^n\tau^m\wh s\in \tau \wh L\Rightarrow \tau^{m-1}(z-p+m)^n\wh s\in \wh L \Rightarrow \]
\[
\Rightarrow \tau^{m-1}(z-p-m+1)^n(z-p+m)^n\wh s=(z-p)^n\tau^{m-1}(z-p+m)^n\wh s\in (z-p)^n\wh L\subseteq \tau \wh L \Rightarrow\]\[
\Rightarrow 
\tau^{m-2}(z-p-m+1)^n(z-p+m)^n\wh s \in \wh L \Rightarrow
(\cdots)\Rightarrow (z-p+m)^n\cdots (z-p+1)^n\wh s\in \wh L.
\]
A similar computation for $x^iL$ shows condition (4) in Proposition \ref{prop:proHoloOverFin}, so it follows that $\M \jj_{0*} V\in \pholoOrbit$.

It is clear that $\M\circ \jj_{0*}$ is fully faithful, since morphisms on both sides are $\C((x))\langle\partial\rangle \cong \C((\tau))\langle z\rangle$-linear maps. To show that it is essentially surjective, we just need to produce a preimage for every isomorphism class in $\ppholoOrbit$, or equivalently, for every module of the form $\iota_{p!} M = \C((\tau))\otimes_{\C} M$, where $M\in \finmod$. We may view $\iota_{p!} M$ as a module over $\C((x))\langle \partial \rangle$ via the Mellin transform. It is finite dimensional, since its dimension over $\C((x))$ equals $\dim_\C M$, and further it is regular and its leading coefficient is $p$, since it contains the lattice $L= \C[[x]]\otimes_{\C} M$ which is a witness to both these facts.

\end{proof}
The corollary below follows directly from the adjunctions $\Psi_0\vdash \jj_{0*}$ and $\Phi_{p+\Z}^l\vdash \iota_{p!}$.
\begin{corollary}\label{cor:localMellin}
The following square is commutative up to a natural isomorphism:
\[
\begin{tikzcd}[column sep = 4 em,ampersand replacement=\&]
\Hol(\D)\arrow[r,"\M","\sim"'] \& \Hol(\ddA)\\
\Hol(\mathcal{D}_{K_0})^{\mathrm{reg},(p)}\arrow[from=u,"{}^{\reg,(p)}\circ \Psi_0"]\arrow[r,"\M^{(0{,}p+\Z)}","\sim"']\& \finmod.\arrow[from=u,"\Phi_{p+\Z}^l"]
\end{tikzcd}
\]
\end{corollary}

\section{Proof of Theorem \ref{thm:formalCycles}}\label{subsec:RestrProof}

Throughout this section, an object of $\Loc\times_{\Locc} \Hol(\ddAp)$ will be written as a pair $M=\linebreak(M_{\pz},M_{\As})\in \Loc\times \Hol(\ddAp)$, where there is a fixed isomorphism $M_{\pz}|_{\pzz}\cong M_{\As}|_{\pzz}$. We will often omit this isomorphism and think of it as an identification $M_{\pz}|_{\pzz}= M_{\As}|_{\pzz}$.

All four restriction functors are essentially tensor products, and the adjunction between tensoring and the forgetful functor induces certain morphisms of modules. For $M\in \Hol(\ddA)$, there is a $\ddA$-module homomorphism
\[
\f{|_{\As}}{M}{M|_{\As}}
{m}{m|_\As = 1\otimes m\in \C\left[
z,\frac{1}{z-p}
\right]\otimes_{\C[z]} M.}
\]
Also, there is a countable collection of $\C[z]$-module homomorphisms
\[
\f{|_{\pzi}}{M}{M|_{\pz}}
{m}{m|_\pzi = 1\otimes \tau^{-i}m\in \C\left[\left[
\pi
\right]\right]\otimes_{\C[z]} M.}
\]
Recall that we denote $\pi=z-p$. Note that $((z-p-i)m)|_{\pzi} = \pi (m|_{\pzi})$, and $(\tau m)|_{\pzi} = m|_{U_{p+i-1}}$. Similarly, there are two more homomorphisms for the other restrictions. If $M_{\pz}\in \Mod(\C[[\pi]])$ and $M_{\As}\in \Hol(\ddAp)$, we have
\[
\begin{tikzcd}[column sep = 4 em, row sep=-0.5 em,ampersand replacement=\&]
{M_{\pz}}\arrow[r,"|_{\pzz}"] \& {\C((\pi))\otimes M_{\pz}} \&
{M_{\As}}\arrow[r,"|_{\pzzi}"] \& {M_\As|_{\pzz}} \&
\\
{m}
\arrow[r,mapsto] \&
{m|_\pzz=1\otimes m} \&
{m} 
\arrow[r,mapsto]\&{m|_\pzzi = 1\otimes \tau^{-i}m}.
\end{tikzcd}
\]


\begin{proof}[Proof of Theorem \ref{thm:formalCycles}]
Let $G$ be the induced functor $\Hol(\ddA)\to \Loc\times_{\Locc} \Hol(\ddAp)$, which is given by $G(M)= (M|_\pz,M|_\As,\cong_M )$, where $M|_\pz|_{\pzz}\cong_M M_{\As}|_{\pzz}$ is the isomorphism from Proposition \ref{prop:squareCommutes}. The claim is that $G$ is an equivalence.

Let us construct an inverse. Consider an object in $\Loc\times_{\Locc} \Hol(\ddAp)$. It is of the form $M=(M_{\pz},M_{\As})$, where $M_{\pz}\in \Loc$, so it has two distinguished submodules $M^l_{\pz},M^r_{\pz}\subset M_{\pz}$, and $M_{\As}\in \Hol(\ddAp)$. The final piece of the data is an isomorphism $M_{\pz}|_{\pzz}\cong M_\As|_{\pzz}$, identifying $M_{\pz}|_{\pzz}^{lr}$ with $M_\As|_{\pzz}^{lr}$. We will identify these two objects in $\Locc$ and call them both $M|_{\pzz}$. We construct the following module:
\[
F(M):=
 \{
\left( (m_i)_{i\in \Z},m_\As\right)\in M_\pz^{\Z}\oplus M_\As:
m_\As|_\pzzi = m_i|_{\pzz};
m_i\in M_\pz^l\text{ for }i\ll 0;
m_i\in M^r_\pz\text{ for }i\gg 0
\}
.\]
It has the structure of a $\ddA$-module in the following way:
\[\begin{array}{rclcrcl}
z((m_i)_i,m_\As ) &=&(((\pi+p+i)m_i)_i,zm_\As) & ;
&\tau ((m_i)_i,m_\As ) &=&((m_{i-1})_i,\tau m_\As).
\end{array}
\]
One can check that $z$ and $\tau$ preserve $F(M)$, and that $z\tau=\tau(z-1)$, so indeed $F(M)$ is a $\ddA$-module. 
The map $F$ can be made into a functor in the following way: A morphism $f:(M_\pz,M_\As)\to (N_\pz,N_\As)$ consists of a pair of morphisms $ f_\pz:M_\pz\to N_\pz$ and $ f_\As:M_\As\to N|_\As$ such that $f_\pz|_\pzz = f_\As|_\pzz$, i.e. the following diagram commutes:
\[
\begin{tikzcd}[column sep = 5 em,ampersand replacement=\&]
M_\pz|_\pzz \arrow[r,"f_\pz|_\pzz"]\arrow[d,"\cong_M", leftrightarrow] \& N_\pz|_\pzz\arrow[d,"\cong_N", leftrightarrow] \\
M_\As|_\pzz \arrow[r,"f_\As|_\pzz"] \& N_\As|_\pzz.\\
\end{tikzcd}
\]\vspace{-2em}

So we can identify both horizontal arrows as one map $f|_\pzz:M|_\pzz\to N|_\pzz$. We define a map $Ff:F(M)\to F(N)$, given by
\[
F(f):((m_i)_{i},m_\As)\longmapsto ((f_\pz m_i)_{i},f_{\As}m_\As).
\]

Before we prove that $F$ is well-defined (i.e. that $F(M)$ is holonomic), let us prove a useful lemma.
\begin{lemma}\label{lem:TorsioninF(M)}
For a module $M\in \Loc\times_\Locc \Hol(\ddAp)$, let $K(M)\subset F(M)$ be defined as the sub-$\ddA$-module of $F(M)$ consisting of sections supported on $p+\Z$. Then
\[
K(M) = F(M)\cap (M_\pz^{\Z}\oplus 0) \subset F(M).
\]
Further, $K(M)$ is generated over $\ddA$ by the elements of $M_\pz^{\Z}\oplus M_\As$ of the form
\[\{
((m_i)_i,m_\As)\in M_\pz^{\Z}\oplus M_\As:m_\As = 0; m_i =0 \forall i\neq 0
\} = \{m\in K(M):\exists N, (z-p)^Nm = 0 \} = K(M)_p.
\]
Let $\ov{F(M)} = \displaystyle\frac{F(M)}{K(M)}$, and let $\ov M_\pz$ be the image of $M_\pz$ in $M_\pzz$. Then
\[
\ov{F(M)} \cong \{m \in  M_\As:m|_\pzzi\in \ov M_\pz \forall i\}.
\]
\end{lemma}
\begin{proof}
Suppose $m=((m_i)_i,m_\As)\in F(M)$ is supported on $p+\Z$. This means that there is some $P(z)$ such that $P(z)m=0$ whose roots are contained in $p+\Z$. In particular, $P(z)m_\As=0$, and since $P(z)$ is a unit in $\C\left[z,\right\{\frac{1}{z-p-i}\left\}_i \right]$, we have that $m_\As =0$. The fact that $ m_i|_{\pzz} =m_\As|_\pzz= 0$ implies that every element $m_i$ is torsion. Since $M_\pz^l$ and $M_\pz^r$ are torsion free, $m_i=0$ for $|i|\gg 0$. Therefore, $K(M)$ is generated by the elements $((m_i),m_\As)$ for which $m_0$ is the only nonzero entry, as desired.

From the fact that $K(M) = F(M)\cap (M_\pz^{\Z}\oplus 0)$, we have that $\ov{F(M)}$ injects into $M_\As$, and we have that
\begin{align*}
\ov{F(M)} &\cong \{
m\in M_\As: \exists m_i\in M_\pz,m|_{\pzzi} =m_i|_{\pzz}; m_i\in M_\pz^l\text{ for }i\ll 0;m_i\in M_\pz^r\text{ for }i\gg 0
\} .
\end{align*}
Since $M_\pz^{lr}$ are torsion-free, they map isomorphically into their images $M_\pz|_\pzz^{lr}=M_\As|_\pzz^{lr}\subset \ov M_\pz$, which means $\ov{F(M)}$ can be seen as
\begin{align*}
\ov{F(M)}&\cong \{
m \in  M_\As:m|_\pzzi\in \ov M_\pz;m|_\pzzi\in \ov M_\pz^l\text{ for }i\ll 0;m|_\pzzi\in \ov M_\pz^r\text{ for }i\gg 0
\}.\end{align*}
Now we observe that the last two conditions are vacuous, since any $m\in M_\As$ is contained in an element of $\Gl(M_\As)$. Therefore,
\begin{align*}
\ov{F(M)}&\cong\{
m \in  M_\As:m|_\pzzi\in \ov M_\pz 
\}.
\end{align*}
\end{proof}

\begin{lemma}\label{lem:GinvWellDefd}
The construction of $F$ above defines a functor $F:\Loc\times_{\Locc} \Hol(\ddAp)\to \Hol(\ddA)$.
\end{lemma}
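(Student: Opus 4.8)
The plan is to verify three things: that $F(M)$ is a $\ddA$-module, that $F$ is a functor, and — the real point — that $F(M)$ is holonomic. The first two are essentially bookkeeping. The module axioms for $F(M)$ were already checked when $F$ was defined: $z$ and $\tau^{\pm1}$ send $F(M)$ into itself and satisfy $z\tau=\tau(z-1)$. For a morphism $f=(f_\pz,f_\As)$ of the fibered product one checks directly that $F(f)((m_i)_i,m_\As)=((f_\pz m_i)_i,f_\As m_\As)$ lands in $F(N)$: the identification $f_\pz|_\pzz=f_\As|_\pzz$ forces $(f_\As m_\As)|_{\pzzi}=(f_\pz m_i)|_\pzz$, while $f_\pz(M_\pz^l)\subseteq N_\pz^l$ and $f_\pz(M_\pz^r)\subseteq N_\pz^r$ take care of the tail conditions. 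That $F(f)$ is $\ddA$-linear and that $F$ preserves identities and composition is immediate from the componentwise formula.

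So everything reduces to showing $F(M)\in\Hol(\ddA)$, and here I would use the short exact sequence $0\to K(M)\to F(M)\to\ov{F(M)}\to 0$ from Lemma \ref{lem:TorsioninF(M)}, together with the easy fact that holonomic modules are closed under extensions: finite generation over $\ddA$ is, and the torsion-annihilation condition passes through an extension since $\ddA$ is a domain. The module $K(M)$ is generated over $\ddA$ by $K(M)_p$, which by Lemma \ref{lem:TorsioninF(M)} is the torsion submodule of the finitely generated $\C[[\pi]]$-module $M_\pz$, hence finite dimensional; being also $\C[z]$-torsion, $K(M)$ is holonomic. For $\ov{F(M)}$, Lemma \ref{lem:TorsioninF(M)} identifies it with the sub-$\ddAp$-module $\{m\in M_\As : m|_{\pzzi}\in\ov M_\pz\ \forall i\}$ of the holonomic $M_\As$; thus every element is killed by a nonzero element of $\ddAp$, and clearing denominators on the left produces a nonzero annihilator in $\ddA$. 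So the only genuine issue is that $\ov{F(M)}$ is finitely generated over $\ddA$.

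For this I would sandwich $\ov{F(M)}$ between finitely generated $\ddA$-modules. Using Proposition \ref{Prop:GMExtensionWellDef}, pick an austere $L\in\Gl(M_\As)$ whose stalk $L_p$ lies inside the lattice $\ov M_\pz\subseteq M_\As|_\pzz$ (start from any austere lattice, shrink it by a high power of $z-p$, then replace it by an austere sublattice), and let $P=j_{!*}M_\As$ be the intermediate extension it defines. By Construction \ref{Con:GMExtension} the stalks of $P$ along $p+\Z$ equal those of $L$, so $P_p=L_p\subseteq\ov M_\pz$ and hence $P\subseteq\ov{F(M)}\subseteq M_\As$. Since $P|_\As=M_\As$ by Proposition \ref{Prop:GMExtensionProps} and $P$ contains the $\C[z]$-lattice $L$, the quotient $\ov{F(M)}/P$ is a $\C[z]$-torsion $\ddA$-module supported on $p+\Z$ whose stalk at $p$ is $\ov{F(M)}_p/L_p\subseteq\ov M_\pz/L_p$, a finite-dimensional space (the quotient of two $\C[[\pi]]$-lattices in $M_\As|_\pzz$). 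A $\tau$-equivariant $\C[z]$-torsion module supported on a single orbit with finite-dimensional stalk is generated over $\ddA$ by that stalk, so $\ov{F(M)}/P$ is finitely generated over $\ddA$; as $P=j_{!*}M_\As$ is holonomic, $\ov{F(M)}$ is an extension of two finitely generated $\ddA$-modules, hence finitely generated, hence holonomic, and together with $K(M)$ this gives $F(M)\in\Hol(\ddA)$. The hard part is exactly this last argument: one has to see that the lattice data built into an object of $\Loc$ — the torsion-free submodules $M_\pz^l,M_\pz^r$ with finite-dimensional cokernels — cut $F(M)$ down enough that $\ov{F(M)}/P$ has finite-dimensional stalks, which is what upgrades $\ov{F(M)}$ from merely $\ddAp$-finitely-generated to $\ddA$-finitely-generated.
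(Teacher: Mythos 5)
Your proof is correct and follows the same overall skeleton as the paper's: reduce to the short exact sequence $0\to K(M)\to F(M)\to \ov{F(M)}\to 0$ from Lemma \ref{lem:TorsioninF(M)}, handle $K(M)$ by observing $K(M)_p$ is the (finite-dimensional) torsion of $M_\pz$, and then concentrate on showing $\ov{F(M)}$ is finitely generated over $\ddA$. The routine verifications about the $\ddA$-module structure and functoriality are treated the same way in both. Where you diverge is in the last and only nontrivial step: the paper sandwiches $\ov{F(M)}$ above a coherent $\C[z]$-lattice $L=L_{M_\As}\cap\ov{F(M)}\in\Gl(\ov{F(M)})$ and then argues directly that the torsion quotient $\ov{F(M)}/L$ has finite stalks and is eventually $\tau$-periodic, hence finitely generated over $\ddA$; you instead sandwich $\ov{F(M)}$ above the full $\ddA$-submodule $P=j_{!*}M_\As$, chosen so that $P_p\subseteq\ov M_\pz$. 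Your version has the advantage that the quotient $\ov{F(M)}/P$ is automatically supported on the single orbit $p+\Z$ (because $P$ already agrees with $M_\As$ away from $p+\Z$), and there the $\tau$-equivariance argument is especially clean: an invertible $\tau$ forces all stalks along the orbit to be isomorphic, so finite-dimensionality at $p$ gives finite generation over $\ddA$. The cost is that you import the machinery of Section \ref{sec:intermediateExtension} (existence of austere lattices, preservation of holonomicity by $j_{!*}$, and the identification of the stalks of $j_{!*}M_\As$ with those of $L$), and you must carry out the small but real construction of an austere $L$ with $L_p\subseteq\ov M_\pz$ — shrinking an austere lattice by $(z-p)^N$ generally destroys austerity, so one must re-apply Proposition \ref{Prop:GMExtensionWellDef}(1) afterwards, as you note parenthetically; it would be worth making this step explicit if you write it up. By contrast, the paper's argument is essentially self-contained but requires one to track that $\ov{F(M)}/L$ is supported on only finitely many orbits. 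Both arguments are sound; yours is conceptually tidier if the intermediate extension is already available.
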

\begin{proof}
Let us first show that for $M=(M_\pz,M_\As)$, $F(M)$ is holonomic. Consider the exact sequence
\begin{equation}\label{eq:sesF(M)}
0\longrightarrow K(M) \longrightarrow F(M) \longrightarrow M_\As.
\end{equation}
We use Lemma \ref{lem:TorsioninF(M)}: since $M_\pz$ is finitely generated and $K(M)_p\subset M_\pz$, $K(M)_p$ is a finitely generated $\C[z]$-module, so $K(M)$ is finitely generated over $\ddA$. Its generators are torsion over $\C[z]$, so in particular they are torsion over $\ddA$. This proves that $K(M)$ is holonomic.

Therefore, it suffices to show that $\ov{F(M)}=\frac{F(M)}{K(M)}\subset M_\As$ is holonomic. Since it is contained in the holonomic $\ddAp$-module $M_\As$, every element is torsion, so we only have to prove that it is finitely generated.

Using Lemma \ref{lem:TorsioninF(M)}, we have that $\ov{F(M)}\cong  \{
m \in  M_\As:m_\pzzi\in \ov M_\pz 
\}$, and it remains to prove that such a module is finitely generated over $\ddA$. First of all, choose some $L\in \Gl({\ov{F(M)}})$ (this is possible by Observation~\ref{obs:glattices}). Then $L$ is finitely generated over $\C[z]$, and ${\ov{F(M)}}/L$ is a torsion module. For a given $i$, ${\ov{F(M)}}_{p+i}\cong \ov M_\pz$ is a finitely generated $\C[[z-p-i]]$-module, and therefore its torsion quotient $({\ov{F(M)}}/L)_{p+i}$ is finitely generated over $\C[z]$. Now we note that for big enough $i$, $\tau$ induces isomorphisms $L_{p-i-1}\to L_{p-i}$ and $L_{p+i}\to L_{p+i+1}$, and therefore a finite collection of elements of ${\ov{F(M)}}$ suffice to generate $({\ov{F(M)}}/L)_{p+i}$ for all $i$, over $\ddA$. Putting everything together, ${\ov{F(M)}}$ is indeed finitely generated over $\ddA$.

Let us show that the action of $F$ on morphisms is well-defined. Let $f:M\to N$ be a morphism in $\Loc\times_\Locc \Hol(\ddAp)$, and consider $F(f):F(M)\to F(N)$, defined as above by $f((m_i)_{i},m_\As)=((f_\pz m_i)_{i},f_{\As}m_\As)$. We claim that this map is well-defined. It is straightforward to check that $F(f)$ is a $\ddA$-module homomorphism. We will now prove that its image is contained in $F(N)$. Let us show that $(f_{\As}m_\As)|_\pzzi =(f_\pz m_i)|_\pzz$:
\[
(f_{\As}m_\As)|_\pzzi =(f_{\As}\tau^{-i}m_\As)|_\pzz =f|_{\pzz}(\tau^{-i}m_\As)|_\pzz =f|_{\pzz}(m_\As)|_\pzzi =f|_{\pzz}(m_i)|_\pzz =(f_{\pz}m_i)|_\pzz 
.\]
Next we have to show that $f_\pz m_i \in N_\pz^l$ for $i\ll 0$: if $i\ll 0$, $m_i\in M_\pz^l$, so $f_\pz m_i\in N_\pz^l$. Since $f_\pz$ is a morphism in $\Loc$, it maps $M_\pz^l$ into $N_\pz^l$. Similarly, it can be shown that $f_\pz m_i \in N_\pz^r$ for $i\gg 0$.

Given that $F$ is well-defined, it is clear that indeed it is a functor, i.e. that it preserves compositions and it maps identity morphisms to identity morphisms.
\end{proof}
\begin{lemma}
$F\circ G \cong \Id_{\Hol(\ddA)}$.
\end{lemma}
\begin{proof}
Let $M\in \Hol(\ddA)$. There is a natural map $\phi :M\to F(G(M))$, given by
\[
\phi(m) = ((m|_\pzi)_i,m|_\As)
.\]
It can be checked that $\phi(m)\in F(G(M))$: this amounts to showing that $m|_\pzi|_\pzz = m|_\As|_\pzzi$, which is Proposition \ref{prop:squareCommutes}, and that $m|_\pzi\in M_\pz^{lr}$ for big enough or small enough $i$. This follows from the fact that any element is contained in an element of $\Gl(M)$. So indeed $\phi$ is well-defined. We claim that $\phi$ is injective: if $\phi(m)=0$, then $m|_\As=0$, which implies that $m$ is supported on $p+\Z$, and if $m|_\pzi=0$ for all $i$, then $m$ has no support on any point of $p+\Z$ either. Therefore, $\phi$ is injective.

Now let us show that $\phi$ is surjective. Consider the sequence (\ref{eq:sesF(M)}) applied to $G(M)=(M|_\pz,M|_\As)$:
\[
0\longrightarrow K(G(M)) \longrightarrow F(G(M)) \longrightarrow G(M)_\As
.\]
Let $\ov{F(G(M))}$ be the image of $F(G(M))$ in $G(M)_\As = M|_\As$, so we have a short exact sequence
\[
0\longrightarrow K(G(M)) \longrightarrow F(G(M)) \longrightarrow \ov{F(G(M))} \longrightarrow 0
.\]
The composition $M\overset{\phi}\to F(G(M))\to \ov{F(G(M))}\subset M|_\As$ is just the natural map $|_\As$. We claim that $M\to\ov{F(G(M))}$ is surjective, which boils down to
\[
\ov M := \{m\in M|_\As :\exists m'\in M,m=m'|_\As \} \supseteq \{m\in M|_\As :m|_\pzzi \in \ov {M|_\pz}\forall i \} =\ov{F(G(M))}
.\]
Take an $m\in M|_{\As}$ contained in the right-hand side. By the definition of $M|_\As$, there is some $P(z)$ with roots contained in $p+\Z$ such that $P(z)m\in \ov M$. Thinking of $\ov M$ as a quasicoherent sheaf on $\A^1$, this is saying that $m$ is a section of $\ov M$ on the open set which is the complement of the roots of $P$. The fact that $m|_\pzzi\in \ov{M|_\pzi}$ implies that this section is regular at the points which are roots of $P(z)$. Since $\ov M$ is a sheaf, this means that $m$ is a global section of $\ov M$, as we wished to prove.

Therefore, $\phi$ induces a surjection onto $\ov{F(G(M))}$, so we just have to show that the image of $\phi$ contains $K(G(M))$. By Lemma \ref{lem:TorsioninF(M)}, $K(G(M))$ is generated by $K(G(M))_p=\{(m_i)\in K(G(M)):m_i=0 \forall i\neq 0 \}$. All of these elements are in the  image of $\phi$, since they are exactly the image of the elements of $M$ whose support is $\{p\}$.
\end{proof}

\begin{lemma}
$G\circ F \cong \Id_{\Loc\times_{\Locc}\Hol(\ddAp)}$.
\end{lemma}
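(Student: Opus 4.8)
The plan is to produce, for each object $M=(M_\pz,M_\As)$ of $\Loc\times_\Locc\Hol(\ddAp)$, a natural isomorphism $\psi_M: G(F(M)) \to M$, i.e. a natural isomorphism $F(M)|_\pz \cong M_\pz$ in $\Loc$ together with a natural isomorphism $F(M)|_\As \cong M_\As$ in $\Hol(\ddAp)$, compatible with the identifications over $\pzz$. I would build each of these two components separately and then check the compatibility square over $\pzz$ commutes.

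First I would handle the $\As$-component. There is an obvious projection $F(M)\to M_\As$, $((m_i)_i,m_\As)\mapsto m_\As$; by Lemma \ref{lem:TorsioninF(M)} its kernel is exactly $K(M)$, and that lemma identifies $\ov{F(M)}=F(M)/K(M)$ with $\{m\in M_\As : m|_{\pzzi}\in \ov M_\pz\ \forall i\}$. Now I would localize: applying $|_\As=\C[z,\tfrac1{z-p}]\otimes_{\C[z]}-$ kills the torsion module $K(M)$, so $F(M)|_\As\cong \ov{F(M)}|_\As$. But $\ov{F(M)}$ is a $\C[z]$-submodule of $M_\As$ whose stalks agree with those of $M_\As$ away from $p+\Z$, and which contains an element of $\Gl(M_\As)$ (as used in the proof of Lemma \ref{lem:GinvWellDefd}); hence $\ov{F(M)}|_\As = M_\As$ as $\ddAp$-modules. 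This gives the natural isomorphism $F(M)|_\As\cong M_\As$.

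Next I would handle the $\pz$-component. Computing the formal fiber of $F(M)$ at $p+i$, the condition $m_\As|_{\pzzi}=m_i|_{\pzz}$ together with the freeness/torsion-freeness of the $M_\pz^{lr}$ should let me identify $F(M)_{p+i}$ with $M_\pz$ itself (after transporting along $\tau^i$): an element $((m_j)_j,m_\As)$ is determined formally near $p+i$ by $m_i\in M_\pz$, since the remaining data ($m_\As$ and the $m_j$ for $j\ne i$) is pinned down over $\pzz$ by the gluing condition and is torsion-free in the relevant directions. Concretely, the map $((m_j)_j,m_\As)\mapsto \tau^{-i}m_i$ should induce an isomorphism $F(M)_{p+i}\xrightarrow{\sim}M_\pz$ of $\C[[\pi]]$-modules, and I would check these assemble into $F(M)|_\pz\cong M_\pz$. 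To pin down the sub-objects, I would use that, by definition of $F(M)$, a section has $m_i\in M_\pz^l$ for $i\ll 0$ and $m_i\in M_\pz^r$ for $i\gg 0$; taking $i$ large in absolute value (so that, as in the well-definedness proofs, $\tau$ identifies all far-away fibers with a fixed lattice) shows that under the above identification $F(M)|_\pz^l$ maps onto $M_\pz^l$ and $F(M)|_\pz^r$ onto $M_\pz^r$. This is precisely the reason the definition of $F(M)$ records the ``tails in $M_\pz^l$ resp.\ $M_\pz^r$'' conditions.

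Finally I would verify that these two isomorphisms agree after applying $|_\pzz$ — i.e.\ that the triple $(F(M)|_\pz\cong M_\pz,\ F(M)|_\As\cong M_\As,\ \cong)$ is a morphism in $\Loc\times_\Locc\Hol(\ddAp)$ — which is immediate from the defining gluing relation $m_\As|_{\pzzi}=m_i|_{\pzz}$, and then confirm naturality in $M$ by chasing $F(f)$ through the constructions (routine, since every map in sight is built from the same projections and fiber evaluations). The main obstacle I expect is the $\pz$-component: one must argue carefully that the extra coordinates in the huge product $M_\pz^\Z\oplus M_\As$ do not contribute new elements to the formal fiber at $p+i$ beyond $m_i$, which is where torsion-freeness of $M_\pz^{lr}$ and the flatness of $\C[[\pi]]$ over $\C[z]$ (already exploited in Proposition \ref{prop:rhoWellDefdExact}) must be invoked; everything else is bookkeeping parallel to the proof of $F\circ G\cong\Id$.
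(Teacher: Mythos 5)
Your overall plan — build $\psi_M\colon G(F(M))\to M$ component-wise as $\psi_\pz\colon F(M)|_\pz\to M_\pz$ and $\psi_\As\colon F(M)|_\As\to M_\As$, check compatibility over $\pzz$, and verify naturality — is the same as the paper's, and your $\As$-component argument (kill $K(M)$ by inverting $(z-p-i)$, then compare stalks of $\ov{F(M)}\subset M_\As$) is a sound, if slightly terse, way to get $\psi_\As$ to be an isomorphism.

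The genuine gap is the surjectivity of $\psi_\pz$. You flag the ``wrong'' half of the problem: you worry that the formal fiber of $F(M)$ at $p+i$ might be \emph{too big} (extra coordinates contributing), and observe torsion-freeness and flatness will tame that — which is the injectivity side, and does work out. What you never address is why the projection $((m_j)_j,m_\As)\mapsto m_0$ should \emph{hit all of} $M_\pz$ after completion. This is not automatic: an element $\eta\in M_\pz$ that is not torsion must be lifted to an element of $F(M)$, which in particular requires an element $m_\As\in M_\As$ with $m_\As|_{\pzz}$ landing correctly in $\C((\pi))\otimes M_\pz$ up to a unit, plus a whole $\Z$-indexed family of local lifts $m_j$ with prescribed tail behavior. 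The nontrivial input here is that the $\C[[\pi]]$-lattice $\ov M_\pz\subset M|_\pzz$ admits a basis consisting of images of elements of $M_\As$; the paper obtains this from the matrix factorization $GL_n(\C((\pi)))=GL_n(\C[[\pi]])\cdot GL_n(\C[\pi,\pi^{-1}])$ (Lemma \ref{lem:matricesOverSeries}), which lets one rotate a $\C(z)$-basis of $\C(z)\otimes M_\As$ into a $\C[[\pi]]$-basis of $\ov M_\pz$. Your proposal contains no substitute for this step, and the sentence ``I would check these assemble into $F(M)|_\pz\cong M_\pz$'' quietly absorbs the entire difficulty. Surjectivity of $\psi_\As$ likewise needs a lifting argument (the paper finds a polynomial $P(z)$ with roots in $p+\Z$ so that $P(z)m_\As$ lifts, then divides by $P$), though that step is closer to your stalk-comparison reasoning and would likely fall out of writing your $\As$-argument carefully. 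I would also flag that item $6$ in the paper's list — that $\psi_\pz$ restricts to bijections $F(M)|_\pz^{lr}\to M_\pz^{lr}$, not merely inclusions — is stated too lightly in your sketch; the paper reduces it to the torsion-free case and uses that $N_\pz^{lr}$ are then determined by $N_\pz$ and $N_\As$.
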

\begin{proof} Let $M=(M_\pz,M_\As)\in \Loc\times_\Locc \Hol(\ddA)$. Let us construct a natural map $\psi:G(F(M))\to M$. Let us write $G(F(M))=(F(M)|_\pz,F(M)|_\As)$. We define
\[\begin{array}{rcclcrccl}
\psi_\pz: & F(M)|_\pz &\longrightarrow & M_\pz & \text{;}& \psi_\As: & F(M)|_\As & \longrightarrow & M_\As \\
     & ((m_i)_i,m_\As)|_\pz& \longmapsto & m_0 & & & ((m_i)_i,m_\As)|_\As & \longmapsto & m_\As.
\end{array}
\]
We must prove all of the following.
\begin{enumerate}
\item $\psi_\pz$ does not depend on the choice of a representative, i.e. if $((m_i)_i,m_\As)|_\pz=((m_i')_i,m_\As')|_\pz$, then $m_0 = m'_0$. Similarly, $\psi_\As$ doesn't depend on the choice of a representative.
\item $\psi_\pz$ is $\C[[\pi]]$-linear and $\psi_\As$ is $\ddAp$-linear.
\item $\psi_\pz$ maps $F(M)|_\pz^{lr}$ into $M_\pz^{lr}$.
\item $\psi_\pz|_\pzz = \psi_\As|_\pzz$.
\item $\psi_\pz$ and $\psi_\As$ are bijections.
\item $\psi_\pz$ induces bijections $F(M)|_\pz^{lr}\overset{\sim}\to M_\pz^{lr}$.
\end{enumerate}
From the above 6 statements, it follows that $(\psi_\pz,\psi_\As)$ is an isomorphism. Here are the proofs.
\begin{enumerate}
\item By the fact that tensoring is a left adjoint, the map $\psi_\pz:F(M)|_\pz\to M_\pz$ is equivalent to a $\C[\pi]$-linear map $F(M)\to M_\pz$, namely the map $((m_i)_i,m_\As)\mapsto m_0$, which gives rise to $\psi_\pz$ after tensoring. Similarly, $\psi_\As$ comes from the $\C[z]$-linear map $((m_i)_i,m_\As)\mapsto m_\As$, after tensoring by $\C[z,(z-p)^{-1}]$.

\item This is clear given that $\psi_\pz$ and $\psi_\As$ are well defined.
\item Let $L_{F(M)}\in \Gl(F(M))$, and let $S$ be a finite generating set for it. By definition, $\tau^N S$ generates $F(M)|_\pz^l$ for $N\gg 0$. Since $S$ is finite, there is an $N$ such that if $i\ge N$ and for any $((m_i)_i,m_\As)\in S$, $m_{-i}\in M_\pz^l$. Therefore, picking $N$ big enough, for $m = ((m_i)_i,m_\As)\in S$,
\[
\psi_\pz \tau^Nm = \psi_\pz ((m_{i-N})_i,\tau^Nm_\As) =m_{-N}\in M_\pz^l 
.\]
So the generators of $F(M)|_\pz^l$ are mapped into $M_\pz^l$. The analogous proof shows $\psi_\pz(F(M))|_\pz^r \subseteq M_\pz^r$.
\item The statement $\psi_\pz|_\pzz = \psi_\As|_\pzz$ amounts to saying that for $ ((m_i)_i,m_\As)\in F(M)$, $m_0|_\pzz = m_\As|_\pzz$, which is true by the definition of $F(M)$.
\item Let us first show that $\psi_\pz$ is injective. Suppose $m=((m_i)_i,m_\As)\in F(M)$ and $\psi_\pz(m|_\pz)=0$, i.e. $m_0=0$. Therefore, $m_\As|_\pzz = m_0|_\pzz =0$, which implies that the support of $m_\As$ is finite, so it is annihilated by some nonzero $P(z)\in \C\left[ z,\left\{\frac{1}{z-p-i}\right\}_i\right]$, which can be multiplied by a unit to make it a polynomial with no roots in $p+\Z$. Consider now $m'=P(z)m$. The fact that $m'_\As=0$ implies that  $m'\in K(M)$, and therefore it is annihilated by a polynomial $Q(z)\in \C[z]$ whose roots are contained in $p+\Z$ and $Q(p)\neq 0$, since $m'_0=0$. Since $Q$ is a unit in $\C[[z-p]]$, this means that $m'|_\pz=0$, and since $P(z)$ is also a unit in $\C[[z-p]]$, $m|_\pz=0$. If $\psi_\pz$ had nonzero kernel, then the kernel would contain some nonzero element of the form $m|_\pz$, for some $m\in F(M)$, which as we have just shown cannot happen, so we indeed have that $\psi_\pz$ is injective.

Let us now show that $\psi_\As$ is injective. Suppose $m=((m_i)_i,m_\As)\in F(M)$ and $\psi_\As(m|_\As)=0$, i.e. $m_\As=0$. Therefore, $m\in K(M)$. By Lemma \ref{lem:TorsioninF(M)}, $m$ is annihilated by a unit in $\C\left[z,\left\{\frac{1}{z-p-i}\right\}_i\right]$. This implies that $m|_\As$ is annihilated by a unit in the ring, so $m|_\As=0$. As before, if $\psi_\As$ had a kernel, it would intersect the image of $F(M)$ in $F(M)|_\As$. Therefore $\psi_\As$ is injective.

Let us show that $\psi_\pz$ is surjective. Let $\eta\in M_\pz$. First, suppose $\eta$ is torsion. Then, there's an element $m$ of $F(M)$ given by $m_i=0$ for $i\neq 0$, $m_0=\eta$, and $m_\As=0$, so $\psi_\pz(m|_\pz)=\eta$ as desired. If we now let $t:M_\pz\to \ov M_\pz$ be the quotient of $M_\pz$ by its torsion, it suffices to prove that the composition $t\circ \psi_\pz$ is surjective. We have that $\ov M_\pz\subset \C((\pi))\otimes M_\pz = M|_\pzz$. We claim that $\ov M_\pz$ is generated by elements of the form $m|_\pzz$, where $m\in M_\As$.

Let $n$ be the generic rank of $M_\As$, and let $\{e_1,\ldots ,e_n\}\subset M_\As$ be a $\C(z)$-basis of $\C(z)\otimes M_\As$. Let $\{e_1',\ldots ,e_n'\}$ be a $\C[[\pi]]$-basis of $\ov M_\pz$. Then they are both $\C((\pi))$-bases of $M_\pzz$, so there is some matrix $B$ such that $e'_i=Be_i$, where $B\in \GL_n(\C((\pi)))$. Now we use the following lemma, which is proved at the end of the section.

\begin{lemma}\label{lem:matricesOverSeries}
\[
\GL_n(\C((\pi)))=\GL_n(\C[[\pi]])\GL_n(\C[\pi,\pi^{-1}])
.\]
\end{lemma}

If we write $B=AC$, with $C\in \GL_n\left(\C\left[z,\frac{1}{z-p}\right]\right)$ and $A\in \GL_n(\C[[\pi]])$, we have that $\{Ce_i \}$ is another $\C(z)$-basis of $\C(z)\otimes M_\As$, and $\{ A^{-1}e_i'\}$ is another $C[[\pi]]$-basis of $\ov M_\pz$. By the identity $B=AC$, $Ce_i=A^{-1}e'_i$, so $\ov M_\pz$ is generated by elements in the image of $M_\As$. Let $\ov m_0$ be one of these elements, which we want to prove are in the image of $t\circ \psi_\pz$. Since it's in the image of some element $m_\As\in M_\As$, we may consider elements $\ov m_i = m_\As|_\pzzi\in \ov M_\pz$. If we find $m_i\in M_\pz$ with $m_i|_\pzz = \ov m_i$, $m_i\in M_\pz^l$ for $i\ll 0$ and $m_i\in M_\pz^r$ for $i\gg 0$, then we will have that
\[
t\psi_\pz((m_i)_i,m_\As) = tm_0=\ov m_0
.\]
As desired, proving that $\ov m_0$ is in the image of $t\circ \psi_\pz$. In order to prove that the $m_i$'s exist, observe that $t:M_\pz\to \ov M_\pz$ is surjective, so there exist some $m_i$'s such that $m_i|_\pzz=\ov m_i$. Since $\ov m_i= m_\As|_\pzzi$, we have that $\ov m_i\in M_\pzz^l$ for $i\ll 0$. Since $t$ induces an isomorphism $M_\pz^l\to M_\pzz^l\subset \ov M_\pz$, we may choose $m_i$ in $M_\pz^l$ for $i$ small enough, and similarly $m_i\in M_\pz^r$ for $i$ big enough. This finishes the proof that $\psi_\pz$ is surjective.

Finally, let us show that $\psi_\As$ is surjective. Let $m_\As\in M_\As$, and consider the sequence $(\tau^{-i} m_\As)|_\pzz\in M_\As|_\pzz$: by definition of $|_\pzz:\Hol(\ddAp)\to \Locc$, for $i\ll 0$ we have $(\tau^{-i} m_\As)|_\pzz\in M_\As|_\pzz^l$ and for $i\gg 0$, $(\tau^{-i} m_\As)|_\pzz\in M_\As|_\pzz^r$. This implies that for almost all $i$'s, $m_\As|_\pzzi\in M_\pzz^l\cup M_\pzz^r\subset \ov M_\pz$. Therefore, for some polynomial $P(z)$ with roots in $p+\Z$, we have that $P(z)m_\As|_\pzzi \in \ov M_\pz$, where $\ov M_\pz$ is the image of $M_\pz$ in $M_\pz|_\pzz = M_\As|_\pzz$. Let $\ov m_i = P(z)m_\As|_\pzzi \in \ov M_\pz$.

We must find a sequence $m_i\in M_\pz$ with $m_i|_\pzz = \ov m_i$, $m_i\in M_\pz^l$ for $i\ll 0$ and $m_i\in M_\pz^r$ for $i\gg 0$. In order to do this, we may proceed as before, using the fact that $t:M_\pz\to \ov M_\pz$ is surjective and it induces isomorphisms $M_\pz^l\to M_\pzz^l\subset \ov M_\pz$. Then $\psi_{\As}((m_i)_i,P(z)m_\As)=P(z)m_\As $. Since $P(z)$ is a unit, this implies that $m_\As$ is in the image of $m_\As$, as desired.

\item It only remains to prove that the restriction $\psi_\pz^l:F(M)|_\pz^l\longrightarrow M_\pz^l$ is surjective (and the proof for $\psi_\pz^r$ will be analogous). Given that both modules are torsion-free, and that we already know that $\psi_\pz$ is a bijection, we may kill all the torsion, consider instead the map $\ov{F(M)|_\pz^l}\to \ov M_\pz^l$ and show that it is surjective. We also have that $\ov{F(M)|_\pz} \cong \ov{F(M)}|_\pz$, where $\ov{F(M)}$ is the quotient of $F(M)$ by the elements supported on $p+\Z$. By Lemma \ref{lem:TorsioninF(M)}, $\ov{F(M)}\cong \{m\in M_\As:m|_\pzzi\in \ov M|_\pz\forall i\}\cong F(\ov M_\pz,M_\As)$.

Therefore, it suffices to show that the map $\ov\psi:G(F(\ov{M}))\to \ov M = (\ov M_\pz,M|_\As)$ is an isomorphism in $\Loc\times_\Locc \Hol(\ddAp)$. We have already shown that both its components $\ov \psi_\pz$ and $\ov \psi_\As$ are bijections. It only remains to observe that for an element $N\in \Loc\times_\Locc \Hol(\ddAp)$ such that $N_\pz$ is torsion-free, the modules $N_\pz^{lr}$ are determined by $N_\pz$ and $N_\As$, just by the fact that $N_\pz\to N_\pz|_\pzz$ is an injective map. Therefore, $\ov\psi$ must indeed be an isomorphism.

\end{enumerate}

We have thus proven that $(\psi_\pz,\psi_\As)$ is an isomorphism $G\circ F\cong \Id_{\Loc\times_\Locc \Hol(\ddAp)}$. 
\end{proof}

The last two lemmas together prove that $G$ and $F$ are mutual inverses, and therefore $G$ is an equivalence, as desired.

\end{proof}

\begin{proof}[Proof of Lemma \ref{lem:matricesOverSeries}]
Let $A\in \GL_n(\C((\pi)))$. We show a sequence of row and column operations with coefficients in $\C[[\pi]]$ and $\C[\pi,\pi^{-1}]$ respectively yield the identity matrix.

First, row operations with coefficients in $\C[[\pi]]$ allow to make the matrix upper triangular. Then, multiplying by diagonal matrices on the left and on the right (with the correct coefficients) can ensure that the coefficients along the diagonal are all $1$. At this point, more row operations can ensure all the remaining nonzero coefficients become Laurent polynomials, so the resulting matrix is in $\GL_n(\C[\pi,\pi^{-1}])$.

\end{proof}

\bibliographystyle{alpha}
\bibliography{Bibliography}

\begin{thebibliography}{{Tur}55}

\bibitem[AB06]{AB}
D.~Arinkin and A.~Borodin.
\newblock Moduli spaces of {$d$}-connections and difference {P}ainlev\'e
  equations.
\newblock {\em Duke Math. J.}, 134(3):515--556, 2006.

\bibitem[Ari08]{A}
D.~Arinkin.
\newblock Fourier transform and middle convolution for irregular {D}-modules.
\newblock {\em arXiv:0808.0699}, 2008.

\bibitem[BE04]{BE}
S.~Bloch and H.~Esnault.
\newblock Local {F}ourier transforms and rigidity for {D}-modules.
\newblock {\em Asian J. Math.}, 8(4):587--605, 2004.

\bibitem[Bir11]{Birkhoff}
George~D. Birkhoff.
\newblock General theory of linear difference equations.
\newblock {\em Trans. Amer. Math. Soc.}, 12(2):243--284, 1911.

\bibitem[Bor04]{Borodin}
Alexei Borodin.
\newblock Isomonodromy transformations of linear systems of difference
  equations.
\newblock {\em Ann. of Math. (2)}, 160(3):1141--1182, 2004.

\bibitem[Cou95]{coutinho}
S.C. Coutinho.
\newblock {\em A Primer of Algebraic D-Modules}.
\newblock London Mathematical Society St. Cambridge University Press, 1995.

\bibitem[For12]{forster2012lectures}
Otto Forster.
\newblock {\em Lectures on Riemann surfaces}, volume~81 of {\em Graduate Texts
  in Mathematics}.
\newblock Springer Science \& Business Media, 2012.

\bibitem[GS16]{GS}
A.~Graham-Squire.
\newblock Calculation of local formal {M}ellin transforms.
\newblock {\em Pacific J. Math.}, 283(1):115--137, 2016.

\bibitem[HC19]{h2}
M.~Herrad\'on~Cueto.
\newblock The local information of equivariant sheaves and elliptic equations.
\newblock {\em arXiv:1911.10974}, 2019.

\bibitem[Lau96]{L}
G.~Laumon.
\newblock Transformation de {F}ourier g\'en\'eralis\'ee.
\newblock {\em arXiv:alg-geom/9603004}, 1996.

\bibitem[Lev75]{levelt1975}
A.~H.~M. Levelt.
\newblock Jordan decomposition for a class of singular differential operators.
\newblock {\em Ark. Mat.}, 13(1-2):1--27, 12 1975.

\bibitem[Pow]{MP}
M.~Powers.
\newblock Work in progress.

\bibitem[Rud87]{Rudin}
Walter Rudin.
\newblock {\em Real and Complex Analysis, 3rd Ed.}
\newblock McGraw-Hill, Inc., New York, NY, USA, 1987.

\bibitem[{Sta}18]{stacks-project}
The {Stacks Project Authors}.
\newblock \textit{Stacks Project}.
\newblock \url{http://stacks.math.columbia.edu}, 2018.

\bibitem[THT07]{takeuchi2007}
K.~Takeuchi, R.~Hotta, and T.~Tanisaki.
\newblock {\em D-Modules, Perverse Sheaves, and Representation Theory}.
\newblock Progress in Mathematics. Birkh{\"a}user Boston, 2007.

\bibitem[{Tur}55]{Turrittin}
H.L. {Turrittin}.
\newblock {Convergent solutions of ordinary linear homogeneous differential
  equations in the neighborhood of an irregular singular point.}
\newblock {\em {Acta Math.}}, 93:27--66, 1955.

\bibitem[vdPS03]{van2003galois}
M.~van~der Put and M.F. Singer.
\newblock {\em Galois Theory of Linear Differential Equations}.
\newblock Grundlehren der mathematischen Wissenschaften. Springer Berlin
  Heidelberg, 2003.

\end{thebibliography}

\end{document}